\newcommand\cyr{%
\renewcommand\rmdefault{wncyr}%
\renewcommand\sfdefault{wncyss}%
\renewcommand\encodingdefault{OT2}%
\normalfont \selectfont} \DeclareTextFontCommand{\textcyr}{\cyr}
\newtheorem{theorem}{Theorem}[section]
\newtheorem{proposition}[theorem]{Proposition}
\newtheorem{lemma}[theorem]{Lemma}
\newtheorem{corollary}[theorem]{Corollary}
\theoremstyle{definition} 
\newtheorem{definition}[theorem]{Definition}
\DeclareMathOperator{\SL}{SL}
\DeclareMathOperator{\Sp}{Sp}\DeclareMathOperator{\M}{M}
\DeclareMathOperator{\NM}{NM}
\DeclareMathOperator{\GL}{GL}\DeclareMathOperator{\GSp}{GSp}
\DeclareMathOperator{\modu}{mod}\DeclareMathOperator{\cusp}{cusp}
\DeclareMathOperator{\spin}{spin}
\DeclareMathOperator{\SGN}{sgn}\DeclareMathOperator{\Tr}{Tr}
\DeclareMathOperator{\diag}{diag}\DeclareMathOperator{\st}{st}
\DeclareMathOperator{\J}{J}
\DeclareMathOperator{\IM}{Im}
\DeclareMathOperator{\Real}{Re}\DeclareMathOperator{\ord}{ord}
\DeclareMathOperator{\alg}{alg}\DeclareMathOperator{\Gal}{Gal}
\DeclareMathOperator{\Frob}{Frob}\DeclareMathOperator{\Rad}{Rad}
\DeclareMathOperator{\coh}{H}\DeclareMathOperator{\cont}{cont}\DeclareMathOperator{\cris}{cris}
\DeclareMathOperator{\Cris}{Cris}\DeclareMathOperator{\Fil}{Fil}
\DeclareMathOperator{\ur}{ur}\DeclareMathOperator{\Sel}{Sel}
\newcommand{\inte}{\mathbb{Z}}
\newcommand{\intep}{\inte_{p}}
\newcommand{\rat}{\mathbb{Q}}
\newcommand{\real}{\mathbb{R}}
\newcommand{\cmplx}{\mathbb{C}}
\newcommand{\F}{\mathbb{F}}
\newcommand{\T}{\mathbb{T}}
\newcommand{\A}{\mathbb{A}}
\newcommand{\Ff}{F_{f}}
\newcommand{\p}{\mathfrak{p}}
\newcommand{\Gtwo}{\Gamma_{2}}
\renewcommand{\i}{{\bf i}}
\newcommand{\ROI}{\mathcal{O}}
\newcommand{\h}[1]{\mathfrak{h}^{#1}}
\newcommand{\Slash}[1]{|_{#1}}
\newcommand{\mat}[4]{\begin{pmatrix}{#1}&{#2}\\{#3}&{#4}\end{pmatrix}}
\newcommand{\mc}[1]{\mathscr{#1}}
\DeclareMathOperator{\ev}{ev}\DeclareMathOperator{\Hom}{Hom}\DeclareMathOperator{\Res}{Res}
\newcommand{\mf}[1]{\mathfrak{#1}}
\newcommand{\ov}[1]{\overline{#1}}
\DeclareMathOperator{\semi}{ss}\DeclareMathOperator{\G}{G}
\newcommand{\lat}{\mathscr{L}}\DeclareMathOperator{\Id}{Id}\DeclareMathOperator{\End}{End}\DeclareMathOperator{\Ann}{Ann}\DeclareMathOperator{\Ga}{G}
\DeclareMathOperator{\id}{id}\DeclareMathOperator{\Ext}{Ext}\DeclareMathOperator{\Fitt}{Fitt}\DeclareMathOperator{\Aut}{Aut}
\newcommand{\absgal}{\G_{\rat}}\DeclareMathOperator{\length}{length}\newcommand{\gal}[1]{\Ga_{#1}}
\begin{document}

\title[Cuspidality of Pullbacks of Siegel Eisenstein series]{On the cuspidality of pullbacks of Siegel Eisenstein series and applications to the Bloch-Kato conjecture}
\author{Jim Brown}
\address{Department of Mathematics\\
California Institute of Technology\\ Pasadena, CA 91125}
\email{jimlb@caltech.edu}
\subjclass[2000]{Primary 11F33, 11F67; Secondary 11F46, 11F80}
\keywords{Saito-Kurokawa, modular forms, pullbacks of Eisenstein series, Selmer groups}

\begin{abstract}  
Let $k > 3$ be an integer and $p$ a prime with $p > 2k-2$. Let $f$ be a newform of weight $2k-2$ and level 1 so that $f$ is ordinary at $p$ and $\ov{\rho}_{f}$ is irreducible.  Under some additional hypotheses we prove that $\ord_{p}(L_{\alg}(k,f)) \leq \ord_{p}(\# S)$ where $S$ is the Pontryagin dual of the Selmer group associated to $\rho_{f} \otimes \varepsilon^{1-k}$ with $\varepsilon$ the $p$-adic cyclotomic character.  We accomplish this by first constructing a congruence between the Saito-Kurokawa lift of $f$ and a non-CAP Siegel cusp form.  Once this congruence is established, we use Galois representations to obtain the lower bound on the Selmer group.
\end{abstract}

\maketitle

\section{Introduction}

The conjecture of Bloch and Kato, also known as the Tamagawa number conjecture, is one of the central outstanding conjectures in 
number theory.  Let $f$ be a newform of weight $2k-2$ and level 1.  The Bloch-Kato conjecture for modular forms roughly states that the special values of the $L$-function associated to $f$ should measure the size of the Selmer group associated to twists of the Galois representation associated to $f$.  In previous work we showed that under suitable hypotheses that one has that if $\ord_{p}(L_{\alg}(k,f)) \geq 1$, then $\ord_{p}(\#\Sel(W)) \geq 1$ (\cite{jimbrown}).  Unfortunately, due to limitations of the method used one was unable to gain any more information then this.  In this paper we improve upon these results yielding the much stronger result that under the same hypotheses needed before roughly speaking one has
\begin{equation*}
\ord_{p}(L_{\alg}(k,f)) \leq \ord_{p}(\# \Sel(W)).
\end{equation*}
The method of proof is in the same spirit as used in \cite{jimbrown} with significant improvements at a couple of steps.  Nevertheless, we briefly recount the method here.  

The general outline of the method goes back to work of Ribet on the proof of the converse of Herbrand's theorem (\cite{ribet}).  This method has been generalized and applied in other contexts by several authors (\cite{klosin}, \cite{skinnerurban}, \cite{wiles}, etc.)   The method employed by Ribet (in a slightly more general form) is as follows. Given a positive integer $k$ and a primitive Dirichlet character $\chi$ 
of conductor $N$ so that $\chi(-1) = (-1)^{k}$, one has an associated Eisenstein series $E_{k,\chi}$ with constant term $\frac{L(1-k,\chi)}{2}$.  For an odd prime $p$ with $p \mid L(1-k,\chi)$ and $p \nmid N$, one can show that there is a cuspidal eigenform $g$ of weight $k$ and level $M$ with $N \mid M$ so that 
$g \equiv E_{k,\chi} (\modu \p)$ for some prime $\p \mid p$.  This congruence is used to 
study the residual Galois representation of $g$.  It is shown that $\overline{\rho}_{g,\p} \simeq \mat{1}{*}{0}{\chi \omega^{k-1}}$ is non-split
where $\omega$ is the reduction of the $p$-adic cyclotomic character.  This allows one to show that $*$ gives a non-zero cohomology class
in $\coh_{\ur}^1(\rat, \chi^{-1}\omega^{1-k})$.   

For our purposes, the character in Ribet's method will be replaced with a newform $f$ of weight $2k-2$ and level 1.  
Associated to $f$ we have its
Saito-Kurokawa lift $\Ff$, our replacement for the Eisenstein series $E_{k,\chi}$.  Our goal is to find a Siegel
modular form $G$ that is not a Saito-Kurokawa lift so that the Fourier coefficients of $G$ are congruent modulo $\varpi^{m}$ to those of $\Ff$ for some integer $m\geq 1$.  We are
able to produce such a $G$ by exploiting the explicit nature of the Saito-Kurokawa correspondence, using the pullback of a Siegel Eisenstein series and an inner product relation of Shimura (\cite{shimurazfctns}) as key ingredients.  It is at this step a significant improvement over the results of \cite{jimbrown} is made.  We use results of Garrett to show that in fact the Siegel Eisenstein series used pulls back to something cuspidal in each variable.  This allows us to avoid some of the ad hoc methods needed in \cite{jimbrown} that ``lost the powers of $p$''. 
Once we have the congruence desired, we generalize results of Urban (\cite{urbanduke}) to our situation to allow us to give the lower bound on the Selmer group.  This generalization is another significant improvement over our work in \cite{jimbrown} and allows us to get the lower bound desired.


The author would like to thank Jim Cogdell for many helpful conversations.

\section{Notation}\label{sec:notation}

In this section we set the notation and definitions to be used throughout this paper. 

Let $\A$ be the ring of adeles over $\rat$.  For a prime $p$, we fix once and for all compatible embeddings $\ov{\rat} \hookrightarrow \ov{\rat}_{p}$, $\ov{\rat} \hookrightarrow \cmplx$, and $\ov{\rat}_{p} \hookrightarrow \cmplx$.  We write $\ord_{p}(n)$ to denote $p^{m}$ for $p^{m} \parallel n$.  We denote by $\varepsilon_{p}$ the $p$-adic cyclotomic character $\varepsilon_{p}: \absgal \rightarrow \GL_1(\inte_{p})$.  We drop the $p$ when it is clear from context.  We denote the residual representation of $\varepsilon_{p}$ by $\omega_{p}$, again dropping the $p$ when it is clear from context. 

For a ring $R$, we let $\M_{m,n}(R)$ denote the set of $m$ by $n$ matrices with entries in $R$.  If $m=n$ we write $\M_{m}(R)$ for $M_{m,m}(R)$. For a matrix $x \in \M_{2n}(R)$, we write
\begin{equation*}
x = \mat{a_{x}}{b_{x}}{c_{x}}{d_{x}}
\end{equation*}
where $a_{x}$, $b_{x}$, $c_{x}$, and $d_{x}$ are all in $\M_{n}(R)$.  We drop the subscript $x$ when it is clear from the context.  The transpose of a matrix $x$ is denoted by $^t\!x$.

Let $\SL_2$ and $\GL_{n}$ have their standard definitions.  We denote the complex upper half-plane by $\h{1}$.  We have the usual action of $\GL_2^{+}(\real)$ on $\h{1} \cup \mathbb{P}^1(\rat)$ given by linear fractional transformations.   Define 
\begin{equation*}
\Sp_{2n} = \{ g \in \GL_{2n}:\, ^{t}\!\gamma\iota_{2n} \gamma = \iota_{2n}\}, \quad \iota_{2n} = \mat{0_{n}}{-1_{n}}{1_{n}}{0_{n}}.
\end{equation*}
Siegel upper half-space is defined by
\begin{equation*}
\h{n} = \{ Z \in \M_{n}(\cmplx):\, ^{t}\!Z = Z, \IM(Z) > 0\}.
\end{equation*}
The group $\Sp_{2n}(\real)$ acts on $\h{n}$ via
\begin{equation*}
\mat{A}{B}{C}{D}Z = (AZ +B)(CZ+D)^{-1}.
\end{equation*}
We let $\Gamma_1^{J} = \SL_2(\inte) \ltimes \inte^2$ be the Jacobi modular group (\cite{eichlerzagier}).  

Given an $L$-function $L(s) = \prod_{p} L_{p}(s)$ and a finite set of places $\Sigma$, we write 
\begin{equation*}
L^{\Sigma}(s) = \prod_{p \notin \Sigma} L_{p}(s)
\end{equation*}
when we restrict to places away from $\Sigma$ and 
\begin{equation*}
L_{\Sigma}(s) = \prod_{p \in \Sigma} L_{p}(s)
\end{equation*}
when we restrict to the places in $\Sigma$. 

Let $\Gamma \subset \SL_2(\inte)$ be a congruence subgroup.  We write $M_{k}(\Gamma)$ to denote the space of modular forms of weight $k$ and level $\Gamma$.  We let $S_{k}(\Gamma)$ denote the subspace of cusp forms.  The $n$th Fourier coefficient of $f \in M_{k}(\Gamma)$ is denoted by $a_{f}(n)$.  Given a ring $R \subset \cmplx$, we write $M_{k}(\Gamma, R)$ for the space of modular forms with Fourier coefficients in $R$ and similarly for $S_{k}(\Gamma, R)$.  Let $f_1, f_2 \in M_{k}(\Gamma)$ with at least one of the $f_{i}$ a cusp form.  The Petersson product is given by
\begin{equation*}
\langle f_1, f_2 \rangle = \frac{1}{[\ov{\SL_2(\inte)}: \ov{\Gamma}]} \int_{\Gamma \backslash \h{1}} f_1(z) \ov{f_2(z)} y^{k-2} dx dy
\end{equation*}
where $\ov{\SL_2(\inte)} = \SL_2(\inte)/\{\pm 1_2\}$ and $\ov{\Gamma}$ is the image of $\Gamma$ in $\ov{\SL_2(\inte)}$.  The $n$th Hecke operator $T(n)$ has its usual meaning.  Let $A$ be a $\inte$-algebra.  Let $\T_{\inte}$ be the $\inte$-subalgebra of $\End_{\cmplx}(S_{k}(\SL_2(\inte))$ generated by $T(n)$ for $n=1,2, 3, \dots$.  Note that we do not include the weight in the notation as it will always be clear from context.  We set $\T_{A} = \T_{\inte} \otimes_{\inte} A$.  We say $f \in S_{k}(\SL_2(\inte))$ is a newform if it is an eigenform for all $T(n)$ and $a_{f}(1) = 1$.  The $L$-function associated to a newform $f$ of weight $k$ is given by
\begin{equation*}
L(s,f) = \sum_{n \geq 1} a_{f}(n) n^{-s}.
\end{equation*}
The $L$-function $L(s,f)$ can be factored as
\begin{equation*}
L(s,f) = \prod_{p}[(1-\alpha_{f}(p)p^{-s})(1-\beta_{f}(p)p^{-s})]^{-1}
\end{equation*}
where $\alpha_{f}(p) + \beta_{f}(p) = a_{f}(p)$ and $\alpha_{f}(p)\beta_{f}(p) = p^{k-1}$.  The terms $\alpha_{f}(p)$ and $\beta_{f}(p)$ are referred to as the $p$th Satake parameters of $f$.

Kohnen's $+$-space of half-integral weight modular forms is given by
\begin{equation*}
S_{k-1/2}^{+} (\Gamma_0(4)) = \{ g \in S_{k-1/2}(\Gamma_0(4)): \text{$a_{g}(n) = 0$ if $(-1)^{k-1}n \equiv 2,3 (\modu 4)$}\}.
\end{equation*}
The Petersson product on $S_{k-1/2}^{+}(\Gamma_0(4))$ is given by
\begin{equation*}
\langle g_1, g_2 \rangle = \int_{\Gamma_0(4) \backslash \h{1}} g_1(z) \ov{g_2(z)} y^{k-5/2} dx dy.
\end{equation*}

We denote the space of Jacobi cusp forms on $\Gamma_1^{\J}$ by $J_{k,1}^{\cusp}(\Gamma_1^{\J})$.  The inner product is given
by 
\begin{equation*}
\langle \phi_1, \phi_2 \rangle = \int_{\Gamma_1^{\J}\backslash \h{1} \times \cmplx} \phi_1(\tau, z)\overline{\phi_2(\tau,z)} v^{k-3} e^{-4 \pi y^2/v} dx \,dy\, du\, dv
\end{equation*}
for $\phi_1, \phi_2 \in J_{k,1}^{\cusp}(\Gamma_1^{\J})$ and $\tau= u + i v$, $z = x+iy$. 

We denote the space of Siegel modular forms of weight $k$ and level $\Gamma \subset \Sp_{2n}(\inte)$ by $\mathcal{M}_{k}(\Gamma)$.  The subspace of cusp forms is denoted by $\mathcal{S}_{k}(\Gamma)$.  For $F, G \in \mathcal{M}_{k}(\Sp_{2n}(\Gamma))$ with at least one a cusp form, the Petersson product is given by
\begin{equation*}
\langle F, G \rangle = \frac{1}{[ \ov{\Sp_{2n}(\inte)} : \ov{\Gamma}]} \int_{\Gamma \backslash \h{n}} F(Z) \ov{G(Z)} \det(\IM(Z))^{k} d\mu(Z).
\end{equation*}
We will be particularly interested in the decomposition  
$$\mathcal{S}_{k}(\Sp_{4}(\inte)) = \mathcal{S}_{k}^{\M}(\Sp_4(\inte)) \oplus \mathcal{S}_{k}^{\NM}(\Sp_4(\inte))$$
where $\mathcal{S}_{k}^{\M}(\Sp_4(\inte))$ is the space of Maass spezialchars and $\mathcal{S}_{k}^{\NM}(\Sp_4(\inte))$ is the orthogonal complement. A form $F \in \mathcal{S}_{k}(\Sp_4(\inte))$ is in $\mathcal{S}_{k}^{\M}(\Sp_4(\inte))$ if the Fourier coefficients of $F$ satisfy the relation
\begin{equation*}
A_{F}(n,r,m) = \sum_{d \mid \gcd(n,r,m)} d^{k-1} A_{F}\left(\frac{nm}{d^2}, \frac{r}{d}, 1 \right).
\end{equation*}

We let $T^{S}(n)$ denote the $n$th Siegel Hecke operator.  As above, we set $\T_{\inte}^{S}$ to be the $\inte$-subalgebra of $\End_{\cmplx}(\mathcal{S}_{k}(\Sp_{2n}(\inte))$ generated by the $T(n)$.  For a $\inte$-algebra $A$ we write $\T_{A}^{S} = \T_{\inte}^{S} \otimes_{\inte} A$. The Hecke algebra $\T_{\cmplx}^{S}$ respects the decomposition of $\mathcal{S}_{k}(\Sp_4(\inte))$ into the space of Maass and non-Maass forms (\cite{andrianov}).  

Let $F \in \mathcal{S}_{k}(\Gtwo)$ be a Hecke eigenform with eigenvalues $\lambda_{F}(m)$.  
Associated to $F$ is an $L$-function called the spinor $L$-function.  It is defined by 
\begin{equation*}
L_{\spin}(s,F) = \zeta(2s-2k+4) \sum_{m \geq 1} \lambda_{F}(m)m^{-s}.
\end{equation*}
One can also define the spinor $L$-function in terms of the Satake parameters $\alpha_0, \alpha_1$, and $\alpha_2$ of $F$.  One has 
\begin{equation*}
L_{\spin}(s,F) = \prod_{p}Q_{p}(p^{-s})^{-1}
\end{equation*}
where 
\begin{equation*}
Q_{p}(X) = (1-\alpha_0 X)(1-\alpha_0\alpha_1 X) (1-\alpha_0 \alpha_2 X) (1-\alpha_0 \alpha_2 \alpha_2 X).
\end{equation*}
The standard $L$-function
associated to $F$ is given by 
\begin{equation}\label{eqn:standardzeta}
L_{\st}(s,F) = \prod_{\ell} W_{\ell}(\ell^{-s})^{-1}
\end{equation}
where 
\begin{equation*}
W_{\ell}(t) = (1-\ell^{2}t) \prod_{i=1}^{2} (1- \ell^2 \alpha_{i}t)(1-\ell^2\alpha_{i}^{-1} t).
\end{equation*}
 Given a Hecke character $\phi$, the twisted standard zeta function is given by
\begin{equation*}
L_{\st}(s,F,\phi) = \prod_{\ell} W_{\ell}(\phi(\ell)\ell^{-s})^{-1}.
\end{equation*}

\section{Siegel Eisenstein series}\label{sec:eseries}

In this section we study pullbacks of Siegel Eisenstein series and show that the pullback of the Siegel Eisenstein series from $\Sp_{4n}$ to $\Sp_{2n} \times \Sp_{2n}$ is cuspidal in each variable.  We use the methods developed by 
Garrett (\cite{garrett}) along with standard facts about Siegel Eisenstein series to establish this result.  We also recall facts that can be found in \cite{jimbrown} about the Fourier coefficients of the Siegel Eisenstein series as well as a formula for the inner product of the pullback of the Siegel Eisenstein series with a cuspidal Siegel eigenform due to Shimura (\cite{shimurazfctns}).

\subsection{Basic definitions and results}

We begin by recalling the defintion of some
subgroups of $\Sp_{2m}(\A)$ and $\Sp_{2m}(\rat)$.  Let
$N>1$ be an integer let $\Sigma$ be the set of primes dividing $N$. For a prime $\ell$ define 
\begin{equation*}
K_{0,\ell}(N) = \left\{ g \in
\Sp_{2m}(\rat_{\ell}) : a_{g}, b_{g}, d_{g} \in \M_{m}(\inte_{\ell}), c_{g} \in \M_{m}(N\inte_{\ell}) \right\}
\end{equation*}
and set 
\begin{equation*}
K_{0,f}(N) = \prod_{\ell \nmid \infty} K_{0,\ell}(N).
\end{equation*}
Set
\begin{equation*}
K_{\infty} = \{ g \in \Sp_{2m}(\real) : g \i_{2m} = \i_{2m}\}
\end{equation*}
where $\i_{2m} = i 1_{2m}$. 
and 
\begin{equation*}
K_0(N) = K_{\infty} K_{0,f}(N).
\end{equation*} 
Set
\begin{equation*}
\mathbb{S}_{m} = \{ x \in \M_{m} : \, ^{t}x = x \}.
\end{equation*}
Let $P_{2m}=U_{2m}Q_{2m}$ be the Siegel parabolic of
$\Sp_{2m}$ defined by
\begin{equation*}
P_{2m} = \left\{ g \in \Sp_{2m} : c_{g} = 0 \right\}
\end{equation*}
with unipotent radical
\begin{equation*}
U_{2m} = \left\{ u(x) = \mat{1}{x}{0}{1} : x \in \mathbb{S}_{m} \right\}
\end{equation*}
and Levi subgroup
\begin{equation*}
Q_{2m} = \left\{ Q(A) = \mat{A}{0}{0}{^t\! A^{-1}} : A \in \GL_{m} \right\}.
\end{equation*}
We drop the subscript $2m$ from the notation when it is clear from context.

Let $k$ be a
positive integer such that $k > \max\{3, m+1\}$. Let $\chi$ be a Hecke character of 
$\mathbb{A}^{\times}$ satisfying
\begin{eqnarray}\label{eqn:character}
\chi_{\infty}(x) &=& \SGN(x)^{k},\\
\chi_{\ell}(a) &=& 1 \quad \text{if $\ell \nmid \infty$, $a \in
\inte_{\ell}^{\times}$, and $N \mid (a-1)$}. \nonumber
\end{eqnarray}

Define $\varepsilon(g,s;k,N,\chi)$ on $\Sp_{2m}(\A) \times \cmplx$ by $$\varepsilon(g,s;k,N,\chi) = 0$$ if $g \notin P(\A)K_{0}(N)$ and for $g = u(x)Q(A) \theta$ with $u(x) Q(A) \in P(\A)$ and $\theta \in K_{0}(N)$
\begin{equation*}
\varepsilon(g,s;k,N,\chi) = \varepsilon_{\infty}(g,s; k, \chi) \prod_{\ell \nmid N} \varepsilon_{\ell}(g,s;k,\chi) \prod_{\ell \mid N} \varepsilon_{\ell}(g,s; k, N, \chi)
\end{equation*}
where we define the components by
\begin{align*}
\varepsilon_{\infty}(g,s ; k, \chi) &= \chi_{\infty}(\det A_{\infty}) |\det A_{\infty}|^{2s} j^{k}(\theta_{\infty}, \i)^{-1},\\
\varepsilon_{\ell}(g,s; k, \chi) &= \chi_{\ell}(\det A_{\ell}) |\det A_{\ell}|^{2s} \quad \quad (\ell \nmid N) \\
\varepsilon_{\ell}(g,s;k,N,\chi) &= \chi_{\ell}(\det A_{\ell}) \chi_{\ell}(\det d_{\theta})^{-1} |\det A_{\ell}|^{2s} \quad \quad (\ell \mid N).
\end{align*}
The Siegel Eisenstein series is defined by 
\begin{equation*}
E(g,s) = E(g,s;k, N, \chi) = \sum_{\gamma \in P(\rat)\backslash
\Sp_{2m}(\rat)} \varepsilon(\gamma g, s; k,N,\chi).
\end{equation*}
The series $E(g,s)$ converges locally uniformly for $\Real(s) > (m+1)/2$ and can be continued to a meromorphic function
on all of $\cmplx$. It has a functional equation relating $E(g,s)$ to $E(g, (m+1)/2 - s)$.  

Associated to the Siegel Eisenstein series $E(g,s)$ is a complex version $E(Z,s)$ defined on $\h{m} \times \cmplx$ by 
\begin{equation*}
E(Z,s) = j^{k}(g_{\infty}, \i_{2m}) E(g,s)
\end{equation*}
where $Z = g_{\infty} \i_{2m}$ and $g = g_{\rat}g_{\infty}\theta_{f} \in \Sp_{2m}(\rat)\Sp_{2m}(\real) K_{0,f}(N)$.  It will be important for us that $E(Z, (m+1)/2 - k/2)$ and $E(Z, k/2)$ are both holomorphic modular forms of weight $k$ and level $N$ (\cite{shimmathann}).  We will use whichever of 
these Siegel Eisenstein series is most convenient for the current application, keeping in mind the above relation between them.

Finally, we recall a result on the Fourier coefficients of $E(Z,(m+1)/2 -
k/2)$ demonstrated in \cite{jimbrown}.  Let $E^{*}(g,s) = E(g \varsigma_{f}^{-1}, s)$ where we recall $\varsigma = \mat{0_{m}}{-1_{m}}{1_{m}}{0_{m}}$.  It is convenient to look at this translation when calculating the Fourier coefficients of $E(g,s)$.  We let $E^{*}(Z,s)$ be the corresponding complex version.  Write elements $Z \in \h{m}$ as $Z = X+iY$ with $X,Y \in \mathbb{S}^{m}(\real)$ and $Y> 0$.  $L=\mathbb{S}^{m}(\rat)\cap \M_{m}(\inte)$, $L' = \{\mathfrak{s} \in
 \mathbb{S}^{m}(\rat):\Tr(\mathfrak{s}L)\subseteq \inte\}$ and $M = N^{-1}L'$.
The Eisenstein series $E^{*}(Z,s)$ has a Fourier expansion
 \begin{equation*}
 E^{*}(Z,s) = \sum_{h\in M} a(h,Y,s)
 e(\Tr(h X))
 \end{equation*}
 for $Z = X+ i Y\in \h{m}$.  Set $$\Lambda^{\Sigma}(s,\chi) = L^{\Sigma}(2s,\chi) \prod_{j=1}^{[m/2]}L^{\Sigma}(4s-2j,\psi^2).$$
 We normalize $E^{*}(Z,s)$ by multiplying it by
$\pi^{-\frac{m(m+2)}{4}} \Lambda^{\Sigma}(s,\chi)$.  We have the following result.

\begin{theorem} (\cite{jimbrown} Theorem 2.4) Let $p$ be an odd prime such that $p > m$ and $\gcd(p,N) =1$.  Set 
\begin{equation}\label{eqn:normalizedeseries}
D_{E^{*}}(Z,(m+1)/2-k/2) = (\pi^{-\frac{m(m+2)}{4}} \Lambda^{\Sigma}((m+1)/2-k/2,\chi)) E^{*}(Z, (m+1)/2 - k/2).
\end{equation}
Then 
\begin{equation*}
D_{E^{*}}(Z,(m+1)/2-k/2) \in \mathcal{M}_{k}(\Gamma_0^{2m}(N), \intep[\chi, i^{mk}]).
\end{equation*}
\end{theorem}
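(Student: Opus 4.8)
The statement has two parts, of which only one carries content. That $E(Z,(m+1)/2-k/2)$, and hence --- after the translation by $\varsigma$ --- also $E^{*}(Z,(m+1)/2-k/2)$, is a holomorphic Siegel modular form of weight $k$ and level $\Gamma_0^{2m}(N)$ is already recorded above as a result of Shimura (\cite{shimmathann}). So the whole content is the $p$-integrality: after multiplying by the constant $\pi^{-m(m+2)/4}\Lambda^{\Sigma}((m+1)/2-k/2,\chi)$, each Fourier coefficient $a(h,Y,(m+1)/2-k/2)$, $h\in M$, should land in $\intep[\chi,i^{mk}]$. The plan is to take Shimura's explicit formula for these Fourier coefficients, specialize it to $s_{0}=(m+1)/2-k/2$, treat its archimedean and non-archimedean parts separately, and observe that the normalizing constant is exactly what removes the transcendental and non-integral contributions away from $p$.

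Recall from \cite{shimmathann} and \cite{shimurazfctns} that $a(h,Y,s)$ decomposes as a product of an archimedean confluent hypergeometric factor $\xi(Y,h;s)$, a product over the finite primes $\ell$ of local Siegel series $\alpha_{\ell}(h,s)$, and the reciprocal of $L(2s,\chi)\prod_{j=1}^{[m/2]}L(4s-2j,\psi^{2})$. For the non-archimedean part: for $\ell\nmid N$ each Siegel series $\alpha_{\ell}(h,s)$ is a polynomial in $\ell^{-2s}$ with rational-integer coefficients divided by a product of factors of the shape $(1-\ell^{\,j-2s})$, and the product over all $\ell\nmid N$ of these denominators is precisely $\Lambda^{\Sigma}(s,\chi)^{-1}=\big(L^{\Sigma}(2s,\chi)\prod_{j}L^{\Sigma}(4s-2j,\psi^{2})\big)^{-1}$. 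Multiplying by $\Lambda^{\Sigma}(s_{0},\chi)$ therefore clears all of these denominators, leaving a \emph{finite} product --- over the $\ell\mid\det 2h$ --- of polynomials evaluated at $\ell^{-2s_{0}}=\ell^{\,k-m-1}\in\inte$ and at the values $\chi(\ell)$, which are roots of unity. This lies in $\inte[\chi]$, hence is $p$-integral; note that the $p$-Euler factor of $\Lambda^{\Sigma}$ is present (as $p\nmid N$), so it cancels the denominator of $\alpha_{p}(h,s_{0})$ and leaves only its numerator, a polynomial in $p^{\,k-m-1}$ with integer coefficients. For $\ell\mid N$ the translate $E^{*}$ has an elementary local factor with coefficients in $\inte[\chi]$ and denominator prime to $p$, since $\gcd(p,N)=1$.

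For the archimedean part: at $s_{0}=(m+1)/2-k/2$ the factor $\xi(Y,h;s)$ specializes, by Shimura's evaluation at a holomorphic point, to a power of $\pi$ times a ratio of gamma values essentially of the form $\Gamma_{m}(k)/\Gamma_{m}\big((m+1)/2\big)$, times a polynomial in the entries of $Y$ with rational coefficients (a single monomial when $h$ has full rank). The factor $\pi^{-m(m+2)/4}$ is tailored exactly to cancel these $\pi$-powers, including those hidden inside the $\Gamma_{m}$'s. The rational number that is left is built from factorials and from half-integer gamma values $\Gamma(i/2)$ with $i\le m$, so its denominator is divisible only by $2$ and by primes $\le m$; since $p$ is odd and $p>m$, the denominator is prime to $p$. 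Finally, the automorphy factor $j^{k}(\theta_{\infty},\i_{2m})$ in the definition of $E(Z,s)$, together with the $\varsigma$-translation, contributes exactly a power $i^{mk}$, a unit of $\inte[i]$, which is the source of the $i^{mk}$ in the coefficient ring. Assembling the three pieces, each Fourier coefficient of $D_{E^{*}}(Z,s_{0})$ is a product of a $p$-integral rational number, the unit $i^{mk}$, and an element of $\inte[\chi]$; hence it lies in $\intep[\chi,i^{mk}]$, which together with the holomorphy and level from \cite{shimmathann} gives the theorem.

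The step I expect to be the real obstacle is the archimedean one: one must make Shimura's constant at the holomorphic point $s_{0}$ explicit enough to see that, after stripping the $\pi$-powers, the surviving rational number has denominator supported on $2$ and on primes $\le m$ --- precisely the calculation that makes the hypotheses ``$p$ odd, $p>m$, $\gcd(p,N)=1$'' sufficient. The finite-prime bookkeeping is the now-standard interplay between Siegel series and the normalizing $L$-factors once the explicit Siegel series formula is in hand, and the $\chi$- and $i$-bookkeeping merely tracks roots of unity and a single automorphy factor. As an alternative to analyzing the lower-rank Fourier coefficients directly, one may check $p$-integrality only for positive-definite $h$ and then invoke the $q$-expansion principle for Siegel modular forms to propagate it to the remaining coefficients; the archimedean computation above is still needed for this reduced set.
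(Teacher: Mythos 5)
Your proposal is correct and follows essentially the same route as the proof of Theorem 2.4 in \cite{jimbrown}: one specializes Shimura's explicit Fourier-coefficient formula at $s_{0}=(m+1)/2-k/2$, lets $\Lambda^{\Sigma}(s_0,\chi)$ absorb the denominators of the local Siegel series away from $N$ (leaving finite products of integral polynomials in $\ell^{k-m-1}$ and $\chi(\ell)$), and checks that the archimedean confluent hypergeometric factor contributes $i^{mk}\pi^{m(m+2)/4}$ times a rational number whose denominator is supported on $2$ and primes at most $m$, which is where the hypotheses $p$ odd and $p>m$ enter. One caution: the alternative you float at the end is not valid as stated, since the $q$-expansion principle descends the ring of definition from knowledge of the full expansion at a cusp rather than deducing integrality of the degenerate coefficients from that of the positive-definite ones; the $h$ of lower rank must be handled directly (they involve lower-degree Siegel series and archimedean factors and are controlled by the same normalization).
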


\subsection{Pullbacks of Siegel Eisenstein series}

We begin by reviewing the notion of the pullback of an automorphic form on $\Sp_{4n}(\A)$ to $\Sp_{2n}(\A) \times \Sp_{2n}(\A)$.  This theory has been well established and the interested reader is advised to consult any of the following references for more details: (\cite{boch}, \cite{garrett}, \cite{garrett2}, \cite{shimurazfctns}, \cite{shimurabook}).

Let $\Sp_{2n} \times \Sp_{2n}$ be imbedded in $\Sp_{4n}$ via
\begin{equation*}
\iota\left( \mat{a_1}{b_1}{c_1}{d_1} , \mat{a_2}{b_2}{c_2}{d_2} \right) = \begin{pmatrix} a_1 & 0 & b_1 & 0 \\
        0 & a_2 & 0 & b_2 \\ c_1 & 0 & d_1 & 0 \\ 0 & c_2 & 0 & d_2 \end{pmatrix}.
\end{equation*}
One can show that given a holomorphic automorphic form $F$ of weight $k$ and level $N$ on $\Sp_{4n}(\A)$, the function $(g_1, g_2) \mapsto F(\iota(g_1, g_2))$ is a holomorphic automorphic form of weight $k$ and level $N$ in each variable, see \cite{garrett} for example.  The form $F(\iota(g_1, g_2))$ is referred to as the pullback of $F$ from $\Sp_{4n}$ to $\Sp_{2n} \times \Sp_{2n}$, or just the pullback of $F$.  We will often drop the $\iota$ from the notation when it is clear from context.  If we wish to work classically rather then adelically we make use of the embedding \begin{equation*}
\h{n} \times \h{n} \hookrightarrow \h{2n}
\end{equation*}
given by 
\begin{equation*}
Z \times W \mapsto \mat{Z}{0}{0}{W} = \diag[Z,W]
\end{equation*}
arising from the isomorphism $\Sp_{2n}(\real)/K_{0,\infty} \cong \h{n}$. 

Our goal is to show that the Eisenstein series $E(\iota(g_1, g_2), (n+1)/2 - k/2)$ is cuspidal in $g_1$ and $g_2$ where $g_1, g_2 \in \Sp_{2n}(\A)$. We begin by noting that using the functional equation, this is equivalent to showing that $E(\iota(g_1,g_2), k/2)$ is cuspidal in $g_1$ and $g_2$.  We now wish to apply the methods used by Garrett in \cite{garrett}.  To do this, we restrict our Eisenstein series from $P_{4n}(\A)K_{0}(N)$ to $P_{4n}(\A)K(N)$ where $K(N) = K_{\infty} \prod_{\ell \nmid \infty} K_{\ell}(N)$ with 
\begin{equation*}
K_{\ell}(N) = \{ g \in \Sp_{4n}(\rat_{\ell}) : g \equiv 1_{4n} (\modu N)\}.
\end{equation*}
Classically we are taking $E(\mathcal{Z}, k/2)$, which is in $\mathcal{M}_{k}(\Gamma_0^{4n}(N))$, and thinking of it as a modular form in $\mathcal{M}_{k}(\Gamma^{4n}(N))$.  From this it is clear that if the pullback of the restricted Eisenstein series is cuspidal, so is the original.  We denote the Eisenstein series restricted to $P_{4n}(\A)K(N)$ as $E'$.  Observe that if we write 
\begin{equation*}
E'(g,k/2) = \sum_{\gamma \in P_{4n}(\rat)\backslash \Sp_{4n}(\rat)} \epsilon(\gamma g, k/2)
\end{equation*}
with $\epsilon(g,k/2)$ factored into local components as in the previous section, then we have $\epsilon_{\infty}(g,k/2) = \varepsilon_{\infty}(g,k/2)$, $\epsilon_{\ell}(g,k/2) = \varepsilon_{\ell}(g,k/2)$ for $\ell \nmid N$, and $\epsilon_{\ell}(g,k/2) = \chi_{\ell}(\det A_{\ell}) |\det A_{\ell}|^{k}$ for $\ell \mid N$.  Note the $\chi_{\ell}(\det d_{\theta})^{-1}$ drops out because of the restriction to $K(N)$.  We are now in a position to apply Garrett's argument to $E'(\iota(g_1, g_2), k/2)$ to show that it is cuspidal in $g_1$ and $g_2$ (\cite{garrett}).  

Let $\theta \in \Sp_{4n}(\A_{f})$ be an element so that 
\begin{equation*}
(0_{2n} 1_{2n}) \theta = \begin{pmatrix} 1_{n} & 1_{n} & 0_{n} & 0_{n} \\ 0_{n} & 0_{n} & -1_{n} & 1_{n} \end{pmatrix}.
\end{equation*}
We will consider the Eisenstein series translated by this $\theta$, namely, we will show that $E'(\iota(g_1, g_2) \theta^{-1}, k/2)$ is cuspidal in $g_1$ and $g_2$.  Classically this just amounts to translating to a different cusp, so it is sufficient to show this translated Eisenstein series is cuspidal in $g_1$ and $g_2$. 

The space 
\begin{equation*}
X = P_{4n}(\rat) \backslash \Sp_{4n}(\rat) / \iota(\Sp_{2n}(\rat) \times \Sp_{2n}(\rat))
\end{equation*}
has representatives $\tau_0, \tau_1, \dots, \tau_{n}$ so that 
\begin{equation*}
(0_{2n} 1_{2n}) \tau_{i} = \begin{pmatrix} 1_{n-i} & 0_{i} & 0_{n-i} & 0_{i} & 0_{n-i} & 0_{i} & 0_{n-i} & 0_{i} \\
            0_{n-i} & 1_{i} & 0_{n-i} & 1_{i} & 0_{n-i} & 0_{i} & 0_{n-i} & 0_{i} \\
            0_{n-i} & 0_{i} & 0_{n-i} & 0_{i} & 0_{n-i} & 0_{i} & 1_{n-i} & 0_{i} \\
            0_{n-i} & 0_{i} & 0_{n-i} & 0_{i} & 0_{n-i} & -1_{i} & 0_{n-i} & 1_{i} \end{pmatrix}.
\end{equation*}
This is essentially a result in geometric algebra, see \cite{garrett} for a proof.  Let $I_{i}$ be the isotropy group of $P_{4n}(\rat)\tau_{i}$ in $P_{4n}(\rat) \backslash \Sp_{4n}(\rat)$ under $\Sp_{2n}(\rat) \times \Sp_{2n}(\rat)$ acting on the right, i.e., 
\begin{equation*}
I_{i} = \{(g_1, g_2) \in \Sp_{2n}(\rat) \times \Sp_{2n}(\rat) : P_{4n}(\rat) \tau_{i} \iota(g_1, g_2)  = P_{4n}(\rat) \tau_{i} \}.
\end{equation*}
This allows us to write our Eisenstein series as 
\begin{equation*}
E'(\iota(g_1, g_2) \theta^{-1}, k/2) = \sum_{0 \leq i \leq n} \vartheta_{i}(g_1, g_2)
\end{equation*}
where 
\begin{equation*}
\vartheta_{i}(g_1, g_2) = \sum_{\gamma \in I_{i} \backslash \iota(\Sp_{2n}(\rat) \times \Sp_{2n}(\rat))} \epsilon(\tau_{i} \gamma \iota(g_1, g_2) \theta^{-1}, k/2).
\end{equation*}
Our goal now is to show that $\vartheta_{i}(g_1,g_2) = 0$ for all $g_1, g_2 \in \Sp_{2n}(\rat)$ unless $i=n$.  Observe that it is enough to show that for any prime $p \mid N$, we have $\epsilon_{p}(\tau_{i} \gamma \iota(g_1, g_2) \theta^{-1}, k/2)= 0$ for all $g_1, g_2 \in \Sp_{2n}(\rat_{p})$   In order to show this, we give an integral representation of the Eisenstein series components $\epsilon_{p}$.  

Let $\GL_{n}(\rat_{p})$ have Haar measure normalized so that $\GL_{n}(\intep)$ has Haar measure 1.  For $p \mid N$, let $\phi_{p}$ be the characteristic function of the set 
\begin{equation*}
\{ (m_1 m_2) \in \M_{n \times 2n}(\rat_{p}) : (m_1 m_2) \equiv (0_{n} 1_{n}) (\modu N) \}.
\end{equation*}
A change of variables shows that for $g \in \Sp_{n}(\rat_{p})$ we have 
\begin{equation*}
\epsilon_{p}(g; k/2) = \mathcal{I}_{p}(g)/\mathcal{I}_{p}(1_{2n})
\end{equation*}
where  
\begin{equation*}
\mathcal{I}_{p}(g) = \int_{\GL_{n}(\rat_{p})} |\det h|^{k} \chi(\det h) \phi_{p}(h (0_{n} 1_{n}) g) dh.
\end{equation*}

In order to show that $\vartheta_{i}(g_1, g_2) = 0$ for all $g_1, g_2 \in \Sp_{2n}(\rat)$ unless $i = n$, we will show that 
\begin{equation*}
\mathcal{I}_{p}(\tau_{i} \iota(g_1, g_2) \theta^{-1}) = 0 
\end{equation*} 
for $p \mid N$ and every $g_1, g_2 \in \Sp_{2n}(\rat_{p})$ if $i \neq n$.  In particular, the definition of $\mathcal{I}_{p}$ allows us to reduce this to showing that 
\begin{equation*}
\phi_{p}(g(0_{2n}1_{2n}) \tau_{i} \iota(g_1,g_2) \theta^{-1}) = 0
\end{equation*}
for every $p \mid N$, $g_1, g_2 \in \Sp_{2n}(\rat_{p})$, and $g \in \GL_{2n}(\rat_{p})$ unless $i = n$.  We have that $\phi_{p}(g(0_{2n}1_{2n}) \tau_{i} \iota(g_1,g_2) \theta^{-1}) \neq 0$ if and only if $g(0_{2n}1_{2n}) \tau_{i} \iota(g_1,g_2) \theta^{-1} \equiv (0_{2n} 1_{2n}) (\modu N)$, i.e., 
\begin{equation*}
g(0_{2n}1_{2n}) \tau_{i} \iota(g_1,g_2) \equiv (0_{2n} 1_{2n}) \theta (\modu N).
\end{equation*}

Define $\psi: M_{2n,4n} \rightarrow M_{2n}$ by 
\begin{equation*}
\begin{pmatrix} a_{11} & a_{12}& b_{11} & b_{12} \\ c_{11} & c_{12} & d_{11} & d_{12} \end{pmatrix} \mapsto \mat{a_{11}}{b_{11}}{c_{11}}{d_{11}}.
\end{equation*}
The definition of $\theta$ yields 
\begin{equation*}
\psi((0_{2n} 1_{2n})\theta) = \mat{1_{n}}{0_{n}}{0_{n}}{-1_{n}}.
\end{equation*}
Thus, we have that $\psi((0_{2n} 1_{2n})\theta)$ is a matrix of rank $2n$ in $M_{2n}(\rat_{p})$.  However, a short calculation yields that 
\begin{equation*}
\psi(g(0_{2n}1_{2n})\tau_{i} \iota(g_1,g_2)) = g \psi((0_{2n} 1_{2n}) \tau_{i}) g_1
\end{equation*}
and 
\begin{equation*}
\psi((0_{2n} 1_{2n}) \tau_{i}) = \begin{pmatrix} 1_{n-i} & 0_{i} & 0_{i} & 0_{i} \\ 0_{n-i} & 1_{i} & 0_{i} & 0_{i} \\
        0_{n-i} & 0_{i} & 0_{i} & 0_{i} \\ 0_{n-i} & 0_{i} & 0_{i} & -1_{i} \end{pmatrix}.
\end{equation*}
Thus, the rank of $\psi(g(0_{2n}1_{2n})\tau_{i} \iota(g_1,g_2))$ is at most $n+i$.  Hence, for $\psi(g(0_{2n}1_{2n})\tau_{i} \iota(g_1,g_2))$ to be congruent modulo $N$ to $(0_{2n} 1_{2n})\theta$ we must have a matrix of rank at most $n+i$ congruent modulo a proper ideal to a matrix of rank $2n$.  This can only happen if $i = n$.  Thus, we have shown that 
\begin{equation*}
\psi_{p}(g(0_{2n}1_{2n})\tau_{i} \iota(g_1, g_2) \theta^{-1}) = 0
\end{equation*}
unless $i = n$.  This shows that $\vartheta_{i} (g_1, g_2) = 0$ for all $g_1, g_2 \in \Sp_{2n}(\rat)$ unless $i = n$.  Thus we have 
\begin{equation*}
E'(\iota(g_1, g_2) \theta^{-1}, k/2) = \sum_{ \gamma \in I_{n} \backslash \iota(\Sp_{2n}(\rat) \times \Sp_{2n}(\rat))} \epsilon(\tau_{n} \gamma \iota(g_1,g_2) \theta^{-1}, k/2).
\end{equation*}

We are now in a position to show that $E'(\iota(g_1, g_2)\theta^{-1}, k/2)$ is a cuspform in $g_1$ ($g_2$ respectively).  We show here that $E'(\iota(g_1,g_2)\theta^{-1}, k/2)$ is a cusp form in $g_1$.  The argument to establish the result for $g_2$ is the same argument and is omitted. Observe that we have
\begin{equation}\label{eqn:isotropygroup}
I_{n} = \{\iota(g_1, g_2) : g_2 = \hat{g}_1\} \cong \Sp_{2n}(\rat)
\end{equation}
where $\hat{g} = \beta g \beta$ with $\beta = \mat{-1_{n}}{0_{n}}{0_{n}}{1_{n}}$.  Thus, we can take $\iota(\Sp_{2n}(\rat) \times \{1\})$ as a collection of representatives for $I_{n} \backslash \iota(\Sp_{2n}(\rat) \times \Sp_{2n}(\rat))$.  This allows us to write 
\begin{equation*}
E'(\iota(g_1, g_2) \theta^{-1}, k/2) = \sum_{\gamma \in \Sp_{2n}(\rat)} \epsilon(\tau_{n} \iota(\gamma g_{1}, g_2) \theta^{-1}, k/2).
\end{equation*}
We can unfold this sum along the unipotent radical as follows:
\begin{align*}
E'(\iota(g_1, g_2) \theta^{-1}, k/2) &= \sum_{\gamma \in \Sp_{2n}(\rat)} \epsilon(\tau_{n} \iota(\gamma g_{1}, g_2) \theta^{-1}, k/2)\\
    &= \sum_{\gamma \in \Sp_{2n}(\rat)/U_{2n}(\rat)} \sum_{u \in U_{2n}(\rat)} \epsilon(\tau_{n} \iota(\gamma u g_1, g_2)\theta^{-1}, k/2)\\
    &= \sum_{\gamma \in \Sp_{2n}(\rat)/U_{2n}(\rat)} \sum_{u \in U_{2n}(\rat)} \epsilon(\tau_{n} \iota(ug_1, \hat{\gamma}^{-1} g_2) \theta^{-1}, k/2)
\end{align*}
where the last equality follows from the isomorphism in equation (\ref{eqn:isotropygroup}).  We can expand the sum 
\begin{equation*}
\sum_{u \in U_{2n}(\rat)} \epsilon(\tau \iota(ug_1, \hat{\gamma}^{-1} g_2) \theta^{-1}, k/2)
\end{equation*}
in its Fourier expansion in $g_1$ along the unipotent radical.  Since we are only interested here in showing cuspidality, it is enough to show that 
the $\sigma$-th Fourier coefficient is zero for all $g_1, g_2 \in \Sp_{2n}(\A)$ unless $\sigma$ is totally positive definite.  Recall the Fourier coefficient is given by 
\begin{equation*}
\int_{U_{2n}(\rat) \backslash U_{2n}(\A)} \overline{\psi}(\Tr(\sigma g)) \sum_{u \in U_{2n}(\rat)} \epsilon(\tau_{n} \iota(u u(g) g_1, g_2) \theta^{-1}, k/2) dg
\end{equation*}
where we recall that $\psi$ is the standard additive character on $\A/\rat$ given by $x \mapsto e^{2\pi i x}$ on $\real$ and is trivial on $\intep$ for all $p$.   We can fold this integral to obtain the collapsed integral given by
\begin{equation*}
\int_{U_{2n}(\A)} \bar{\psi}(\Tr(\sigma g)) \epsilon(\tau_{n} \iota(u(g) g_1, g_2) \theta^{-1}, k/2) dg.
\end{equation*}
We restrict ourselves now to looking at the infinite place as this is the place that forces the vanishing of the integral for $\sigma$ not totally positive definite.  Recall that $\theta \in \Sp_{4n}(\A_{f})$, so when we look at the infinite place our integral becomes
\begin{equation}\label{eqn:realintegral}
\int_{U_{2n}(\real)} \bar{\psi}(\Tr(\sigma g)) \epsilon_{\infty}(\tau_{n} \iota(u(g) g_1, g_2), k/2) dg.
\end{equation}
It is enough to consider $g_1$ and $g_2$ of the form $g_1 = Q(A_1)$, $g_2 = u(x)Q(A_2)$.  This follows from the Iwasawa decomposition and the right $(K(N), j^{-k})$-equivariance of $\epsilon_{\infty}$.  One can use the fact that $\tau_{n}$ is of the form 
\begin{equation*}
\begin{pmatrix} * & * & * & * \\ * & * & * & * \\ 1_{n} & 1_{n} & 0_{n} & 0_{n} \\ 0_{n} & 0_{n} & -1_{n} & 1_{n} \end{pmatrix}
\end{equation*} 
to calculate 
\begin{equation*}
\tau_{n} \iota(u(g) g_1, g_2)  = \begin{pmatrix} * & * & * & * \\ * & * & * & * \\ A_1 & A_2 & g\, ^t\! A_1^{-1} & x\, ^t\! A_2^{-1} \\ 0 & 0 & -^t\! A_1^{-1} & ^t\! A_2^{-1} \end{pmatrix}
\end{equation*}
where we use a $*$ to indicate that we are not interested in this entry of the matrix.  We now make the observation that for any $g \in \Sp_{4n}(\real)$, one has $\epsilon_{\infty}(g, k/2) = \chi(\det A_{g}) j(g, i)^{-k}$ where $g = u(x_{g})Q(A_{g})k_{g}$.  This follows immediately from the definition of $\epsilon_{\infty}$ and the fact that $j(g,i)^{-k} = |\det A_{g}|^{k} j(k_{g},i)^{-k}$.  Applying this to our current situation we have
\begin{equation*}
\epsilon_{\infty}(\tau_{n}\iota(u(g)g_1, g_2), k/2) = \chi(\det \tilde{A}) \det \left( i \mat{A_1}{A_2}{0}{0} + \mat{g\, ^t\! A_1^{-1}} {x\, ^t\! A_2^{-1}}{-^t\! A_1^{-1}}{^t\! A_2^{-1}} \right)^{-k}
\end{equation*}
where we write $\tilde{A} = A_{\tau_{n}\iota(u(g)g_1, g_2)}$ to ease the notation.  Observe that we have
\begin{equation*}
i \mat{A_1}{A_2}{0}{0} + \mat{g\, ^t\! A_1^{-1}} {x\, ^t\! A_2^{-1}}{-^t\! A_1^{-1}}{^t\! A_2^{-1}} = \mat{iA_1 \, ^t\!A_1 + g}{i A_2 \, ^t\! A_2 + x}{-1}{1} \mat{^t\!A_1^{-1}}{0}{0}{ ^t\!A_2^{-1}}.
\end{equation*}
We now recall the formula that if $D^{-1}$ exists then we have
\begin{equation*}
\det \mat{A}{B}{C}{D} = \det D \det( A- B D^{-1}C).
\end{equation*}
This follows immediately from the identity
\begin{equation*}
\mat{A}{B}{C}{D} = \mat{1}{BD^{-1}}{0}{1} \mat{A-BD^{-1}C}{0}{C}{D}.
\end{equation*} 
Thus, we have 
\begin{align*}
\epsilon_{\infty}(\tau_{n}\iota(u(g)g_1, g_2), k/2) &= \chi(\det \tilde{A}) (\det A_1 \det A_2)^{-1} \det(g + x + i A_1 \, ^t\!A_1 + i A_2 \, ^t\!A_2)^{-k}\\
     &= \chi(\det \tilde{A}) (\det A_1 \det A_2)^{-1} \det (g + z)^{-k}
\end{align*}
where $z \in \h{n}$.  Thus, the integral in equation (\ref{eqn:realintegral}) is equal to
\begin{equation*}
\chi(\det \tilde{A}) (\det A_1 \det A_2)^{-1} \int_{U_{2n}(\real)} \overline{\psi}(\Tr(\sigma g)) \det(g + z)^{-k} dg \\
    = 0
\end{equation*}
unless $\sigma$ is totally positive definite.  This last equality is a result due to Siegel.  Thus, we have that $E'(\iota(g_1, g_2) \theta^{-1}, k/2)$ is a holomorphic cuspform in $g_1$.  Similarly, we have it is a holomorphic cuspform in $g_2$ as well.  Combining this with our previous discussion relating $E(\iota(g_1, g_2), (n+1)/2 - k/2)$ to $E'(\iota(g_1, g_2)\theta^{-1}, k/2)$, we see that $E(\iota(g_1, g_2), (n+1)/2 - k/2)$ is a holomorphic cuspform in $g_1$ (respectively, $g_2$.)

\subsection{Pullbacks and an inner product relation}

In this section we summarize a result of Shimura giving the inner product between a cuspidal Siegel eigenform and 
the pullback of the Siegel Eisenstein series.  See \cite{shimurazfctns} for a detailed treatment of this material.  We specialize to the case
of $\Sp_4 \times \Sp_4$ embedded in $\Sp_8$ as this will be the case for which we apply the result.  

Let $\sigma \in \Sp_{8}(\A_{f})$ be defined by
\begin{equation*}
\sigma_{\ell} = \left \{ \begin{array}{ccc} I_{8} & & \text{if $\ell \nmid N$} \\
\begin{pmatrix}I_{4} & 0_{4} \\ \begin{pmatrix} 0_{2} & I_{2} \\ I_{2} &
0_{2} \end{pmatrix} & I_{4} \end{pmatrix} & & \text{if $\ell \mid N$.}
\end{array}\right.
\end{equation*}
Strong approximation gives an element $\rho \in
\Sp_{8}(\rat)$ so that
 $E\Slash{\rho}(Z,s)$
corresponds to $E(g\sigma^{-1},s)$.

Let $F \in \mathcal{S}_{k}(\Gamma_0^{4}(N))$ be a Siegel
eigenform.  Applying (\cite{shimurazfctns}, equation (6.17)) to our
situation we obtain
\begin{equation}\label{eqn:shimuraeq1}
\langle D_{E\Slash{\rho}}(\diag[Z,W], (5-k)/2),
(F\Slash{\varsigma})^{c}(W) \rangle = \pi^{-3} \,\mathcal{A}_{k,N}
L^{\Sigma}_{\st}(5-k,F, \chi) F(Z)
\end{equation}
where $D_{E\Slash{\rho}}$ is the normalized Eisenstein series as defined in equation (\ref{eqn:normalizedeseries}), $\displaystyle \mathcal{A}_{k,N} =  \frac{(-1)^{k}\,
2^{2k-3} v_{N}}{3 \,[\Sp_{4}(\inte):\,\Gamma_0^{4}(N)]}$,
$v_{N} = \pm 1$,
$L^{\Sigma}_{\st}(5-k,F,\chi)$ is the standard zeta function as
defined in equation (\ref{eqn:standardzeta}), and
$(F\Slash{\varsigma})^{c}$ denotes taking the complex conjugates of
the Fourier coefficients of $F\Slash{\varsigma}.$ 

\section{The Saito-Kurokawa Correspondence}\label{sec:sklifts}

The Saito-Kurokawa correspondence is a well known result with several excellent references
for the basic facts (\cite{eichlerzagier}, \cite{geer}, \cite{zagier}).  For the facts we will be interested in, the reader
is advised to consult the section on Saito-Kurokawa lifts in \cite{jimbrown}.  Here we will briefly recall
the Saito-Kurokawa correspondence and state some necessary facts, but omit a lengthy discussion preferring to point 
unfamiliar readers to the appropriate references.

Let $f \in S_{2k-2}(\SL_2(\inte), \ROI)$ be a newform with Fourier coefficients in a ring $\ROI$.  The Saito-Kurokawa correspondence
associates to $f$ a cuspidal Siegel eigenform $\Ff$ in the Maass spezialchars $\mathcal{S}_{k}^{\M}(\Sp_4(\inte))$. This correspondence is achieved via a set of isomorphisms, the first being between $S_{2k-2}(\SL_2(\inte))$ and $S_{k-1/2}^{+}(\Gamma_0(4))$.  The isomorphism from Kohnen's $+$-space of half-integral weight modular forms to $S_{2k-2}(\SL_2(\inte))$ is given by
sending 
\begin{equation*}
g(z) = \sum_{\substack{n \geq 1 \\ (-1)^{k-1}n \equiv 0,1(\modu 4)}} c_{g}(n) q^{n} \in S_{k-1/2}^{+}(\Gamma_0(4))
\end{equation*}
to 
\begin{equation*}
\zeta_{D} g(z) = \sum_{n=1}^{\infty} \left(\sum_{d \mid n} \left(\frac{D}{d}\right) d^{k-2} c_{g}(|D|n^2/d^2)\right) q^{n}
\end{equation*}
where $D$ is a fundamental discriminant with $(-1)^{k-1}D>0$.  The second isomorphism is between $J_{k,1}^{\cusp}(\SL_2(\inte) \ltimes \inte^2)$ and $S^{+}_{k-1/2}(\Gamma_0(4))$ via the map 
\begin{equation*}
\sum_{\substack{D<0, r \in \inte \\D \equiv r^2 (\modu 4)}} c(D,r) e\left(\frac{r^2-D}{4}\tau + rz\right) \mapsto \sum_{\substack{D<0 \\ D \equiv 0,1 (\modu 4)}} c(D)e(|D|\tau).
\end{equation*}
Finally, we obtain an isomorphism between $J_{k,1}^{\cusp}(\SL_2(\inte) \ltimes \inte^2)$ and the space of Maass spezialchars $\mathcal{S}_{k}^{\M}(\Sp_4(\inte))$ given by 
\begin{equation*}
\phi(\tau,z) \mapsto F(\tau, z, \tau') = \sum_{m \geq 0} V_{m}\phi(\tau, z)e(m\tau')
\end{equation*}
where $V_{m}$ is the index shifting operator as defined in (\cite{eichlerzagier}, Section 4).  We will need that the Fourier coefficients of the form $F$ arising from $\phi(\tau,z)$ are related to the Fourier coefficients of $\phi$ by 
\begin{equation*}
A_{F}(n,r,m) = \sum_{d \mid \gcd(m,m,r)} d^{k-1} c_{\phi}\left(\frac{4nm-r^2}{d^2}, \frac{r}{d}\right).
\end{equation*}

We recall that we write $\mathcal{S}_{k}^{\NM}(\Sp_4(\inte))$ for the orthogonal complement to $\mathcal{S}_{k}^{\M}(\Sp_4(\inte))$ in $\mathcal{S}_{k}(\Sp_4(\inte))$ and refer to this as the space of non-Maass forms. The Saito-Kurokawa correspondence is stated in the following theorem.

\begin{theorem}\label{thm:skcorrespondence} (\cite{zagier}) There is a Hecke-equivariant isomorphism between $S_{2k-2}(\SL_2(\inte))$ and $\mathcal{S}_{k}^{\M}(\Sp_4(\inte))$ such that if $f \in S_{2k-2}(\SL_2(\inte))$ is a newform, then one has
\begin{equation}\label{eqn:skequation}
L_{\spin}(s,\Ff) = \zeta(s-k+1)\zeta(s-k+2)L(s,f).
\end{equation}
\end{theorem}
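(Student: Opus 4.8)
The plan is to realize the asserted isomorphism as the composition of the three maps recalled just above the statement,
\[
S_{2k-2}(\SL_2(\inte)) \;\xrightarrow{\ \sim\ }\; S_{k-1/2}^{+}(\Gamma_0(4)) \;\xrightarrow{\ \sim\ }\; J_{k,1}^{\cusp}(\Gamma_1^{\J}) \;\xrightarrow{\ \sim\ }\; \mathcal{S}_{k}^{\M}(\Sp_4(\inte)),
\]
to verify that each arrow respects the relevant Hecke actions, and then to read off the spinor $L$-function identity from the resulting eigenvalue relations. First I would treat the three maps individually. The first is (the inverse of) Kohnen's refinement of the Shimura lift: fixing a fundamental discriminant $D$ with $(-1)^{k-1}D>0$, the map $g \mapsto \zeta_D g$ is a linear isomorphism $S_{k-1/2}^{+}(\Gamma_0(4)) \to S_{2k-2}(\SL_2(\inte))$ intertwining the half-integral weight operator $T(p^2)$ with $T(p)$. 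The second is the Eichler--Zagier theta decomposition, which expresses a Jacobi cusp form of index $1$ through its two theta components and identifies $J_{k,1}^{\cusp}(\Gamma_1^{\J})$ with $S_{k-1/2}^{+}(\Gamma_0(4))$ compatibly with the Hecke operators on the two spaces. The third is the Maass lift $\phi \mapsto F = \sum_{m \geq 0} V_m\phi \cdot e(m\tau')$: the index-shifting operators $V_m$ are arranged so that $F$ automatically satisfies the Maass relation on its Fourier coefficients (the displayed formula for $A_F(n,r,m)$), whence $F \in \mathcal{S}_{k}^{\M}(\Sp_4(\inte))$; the map is injective since $V_0 = \id$ recovers $\phi$ as the first Fourier--Jacobi coefficient of $F$, and surjective because every element of the Maass Spezialschar is rebuilt from its first Fourier--Jacobi coefficient by exactly that relation.

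Composing gives a linear isomorphism; the substantive point is Hecke-equivariance. Here I would track the Hecke algebras through all three steps, as in \cite{eichlerzagier} and \cite{andrianov}, to conclude that a newform $f$ is carried to a Siegel Hecke eigenform $\Ff \in \mathcal{S}_{k}^{\M}(\Sp_4(\inte))$ whose $T^{S}(p)$- and $T^{S}(p^2)$-eigenvalues are explicit polynomials in $a_f(p)$, $p^{k-1}$ and $p^{k-2}$. Equivalently, one obtains that the multiset of spinor Satake parameters of $\Ff$ at $p$,
\[
\{\alpha_0,\ \alpha_0\alpha_1,\ \alpha_0\alpha_2,\ \alpha_0\alpha_1\alpha_2\},
\]
equals $\{\alpha_f(p),\ \beta_f(p),\ p^{k-1},\ p^{k-2}\}$, where $\alpha_f(p)\beta_f(p) = p^{2k-3}$ since $f$ has weight $2k-2$; the consistency relation $\alpha_0^2\alpha_1\alpha_2 = p^{2k-3}$ (against $\alpha_f(p)\beta_f(p)\,p^{k-1}p^{k-2} = p^{4k-6}$) already constrains the matching up to the standard similitude normalization.

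Granting this, the $L$-function identity is a short Euler-factor computation. Since $L_{\spin}(s,\Ff) = \prod_p Q_p(p^{-s})^{-1}$ with $Q_p(X)$ the degree-four polynomial whose reciprocal roots are the four spinor Satake parameters listed above, the local factor at $p$ becomes
\[
Q_p(p^{-s})^{-1} = \bigl[(1-\alpha_f(p)p^{-s})(1-\beta_f(p)p^{-s})(1-p^{k-1-s})(1-p^{k-2-s})\bigr]^{-1},
\]
and taking the product over all $p$ identifies $L_{\spin}(s,\Ff)$ with $\zeta(s-k+1)\zeta(s-k+2)L(s,f)$. Alternatively, one can insert the computed eigenvalues $\lambda_{\Ff}(m)$ directly into $L_{\spin}(s,\Ff) = \zeta(2s-2k+4)\sum_{m \geq 1}\lambda_{\Ff}(m)m^{-s}$ and recognize the factorization there.

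I expect the main obstacle to be the Hecke-equivariance, and within it the verification that the Maass lift intertwines the Jacobi Hecke operators with the degree-two Siegel Hecke algebra $\T^{S}$; this is the technical heart and amounts precisely to the Eichler--Zagier and Andrianov computations, which one may either reproduce or cite (\cite{zagier}, \cite{eichlerzagier}, \cite{andrianov}). The rest is bookkeeping: reconciling the various normalizations of Hecke operators, Fourier coefficients, and Petersson products across the four kinds of modular forms involved (integral-weight elliptic, half-integral weight with the $+$-condition, Jacobi of index $1$, and Siegel of degree $2$). Once the eigenvalue relations are in hand, the $L$-function factorization is routine.
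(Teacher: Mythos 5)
The paper does not actually prove this theorem --- it is quoted from \cite{zagier} --- and the argument it points to is precisely the chain of isomorphisms you describe (Kohnen/Shimura, Eichler--Zagier theta decomposition, Maass lift) followed by the Satake-parameter matching $\{\alpha_0,\alpha_0\alpha_1,\alpha_0\alpha_2,\alpha_0\alpha_1\alpha_2\}=\{\alpha_f(p),\beta_f(p),p^{k-1},p^{k-2}\}$ and the Euler-factor computation, all of which you carry out correctly. One small slip: injectivity of the Maass lift comes from $V_1=\id$, i.e.\ $\phi$ is recovered as the \emph{first} (index-one) Fourier--Jacobi coefficient of $F$, not the zeroth ($V_0\phi$ vanishes for $\phi$ cuspidal).
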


As our applications of the Saito-Kurokawa correspondence will be in terms of arithmetic data, we also need the follow two results.

\begin{corollary}\label{thm:skfourier}(\cite{jimbrown}, Corollary 3.8) Given $f \in S_{k}(\SL_2(\inte), \ROI)$ a newform, then $\Ff$ also has Fourier coefficients in $\ROI$.  In particular, if $\ROI$ is a discrete valuation ring, 
$\Ff$ has a Fourier coefficient in $\ROI^{\times}$.
\end{corollary}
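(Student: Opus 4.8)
The plan is to follow the Saito--Kurokawa correspondence of Theorem~\ref{thm:skcorrespondence} through each of its three constituent Hecke-equivariant isomorphisms
\[
S_{2k-2}(\SL_2(\inte))\;\cong\;S_{k-1/2}^{+}(\Gamma_0(4))\;\cong\;J_{k,1}^{\cusp}(\Gamma_1^{\J})\;\cong\;\mathcal{S}_{k}^{\M}(\Sp_4(\inte)),
\]
and to track Fourier coefficients, exploiting that the last two isomorphisms transport them by formulas with \emph{integer} coefficients. Writing $g=\sum_{N}c_{g}(N)q^{N}$ for the half-integral weight form attached to $f$, the associated Jacobi form $\phi$ satisfies $c_{\phi}(D,r)=c_{g}(|D|)$ for every admissible $(D,r)$, and the Siegel form built from $\phi$ has
\[
A_{\Ff}(n,r,m)=\sum_{d\mid\gcd(n,r,m)}d^{k-1}\,c_{g}\!\left(\frac{4nm-r^{2}}{d^{2}}\right).
\]
Since $d^{k-1}\in\inte$, the corollary reduces to a single statement about $g$: after a suitable choice of the (a priori only projectively determined) normalization, $g$ has all Fourier coefficients in $\ROI$, and, when $\ROI$ is a discrete valuation ring, at least one of them a unit.

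To prove this I would first establish rationality. By Kohnen's multiplicity-one theorem, $g$ is characterized up to a scalar as the unique Hecke eigenform in $S_{k-1/2}^{+}(\Gamma_0(4))$ whose $T(p^{2})$-eigenvalue equals $a_{f}(p)$ for all $p$. The operators $T(p^{2})$ act $\rat$-rationally on $S_{k-1/2}^{+}(\Gamma_0(4))$, which carries an integral structure $\bigoplus_{i}\inte h_{i}$ coming from $q$-expansions (it is cut out of $S_{k-1/2}(\Gamma_0(4))$ by $\inte$-linear conditions on Fourier coefficients); since the eigensystem $\{a_{f}(p)\}$ lies in $K_{f}=\rat(a_{f}(n):n\ge1)$ and the eigenline is one-dimensional, it is defined over $K_{f}$, so $g$ can be taken with $g=\sum_{i}\lambda_{i}h_{i}$ for finitely many $\lambda_{i}\in K_{f}$. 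As $K_{f}$ is the fraction field of $\inte[a_{f}(n)]\subseteq\ROI$, we may clear denominators: there is $c\in\ROI\setminus\{0\}$ with $c\lambda_{i}\in\ROI$ for all $i$, whence $cg$ has every Fourier coefficient in $\ROI$; if $\ROI$ is a discrete valuation ring we may further divide by a power of a uniformizer to make $g$ primitive.

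With this normalized $g$, transporting through the correspondence gives $\Ff$ with $A_{\Ff}(n,r,m)\in\ROI$ for all $(n,r,m)$, which is the first assertion (for this choice of normalization of the isomorphism). For the second, observe that $\gcd(n,r,1)=1$, so $A_{\Ff}(n,r,1)=c_{g}(4n-r^{2})$; thus the coefficients $A_{\Ff}(n,r,1)$ realize a relabelling of the full set of Fourier coefficients of $g$, and primitivity of $g$ produces one of them lying in $\ROI^{\times}$.

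The main obstacle is precisely the integrality/rationality step for $g$: unlike the integral-weight situation, where normalizing $a_{f}(1)=1$ forces all coefficients of a newform into $\ROI$, the half-integral weight form has no distinguished coefficient, so one must exploit the finite-dimensionality of $S_{k-1/2}^{+}(\Gamma_0(4))$ together with its $\inte$-structure to clear denominators uniformly. A secondary point requiring care is the bookkeeping of normalizations in the Eichler--Zagier passage from $g$ to $\phi$ and from $\phi$ to the Siegel form, so as to be certain the displayed coefficient formulas hold on the nose --- i.e. that no spurious powers of $p$ enter in going from $g$ to $\Ff$.
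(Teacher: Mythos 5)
Your argument is correct and is essentially the argument behind the cited result: the paper itself gives no proof here, deferring to \cite{jimbrown}, Corollary 3.8, which likewise reduces everything to normalizing the half-integral weight form $g$ to be $\ROI$-integral and primitive (via the rational structure on $S^{+}_{k-1/2}(\Gamma_0(4))$ and Kohnen's multiplicity one) and then transporting coefficients through the explicit Eichler--Zagier formulas, with the $(n,r,1)$-coefficients of $\Ff$ recovering those of $g$. The only point worth flagging is that the integral basis of the plus space and the attainment of the minimal valuation among the $c_{g}(N)$ (so that division by a power of the uniformizer really yields a unit coefficient) deserve a word of justification, but both are standard.
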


\begin{corollary}\label{thm:skcomplexconjugation} Let $f \in S_{k}(\SL_2(\inte))$ and $\Ff$ the Saito-Kurokawa lift of $f$.  One has that
$\Ff^{c} = F_{f^{c}}$.  In other words, the Saito-Kurokawa correspondence respects complex conjugation of Fourier coefficients.
\end{corollary}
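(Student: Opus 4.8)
The plan is to track complex conjugation of Fourier coefficients through each link of the chain of isomorphisms that defines the Saito--Kurokawa correspondence. Write $\Phi_{1}\colon S_{k-1/2}^{+}(\Gamma_0(4)) \to S_{2k-2}(\SL_2(\inte))$ for the map $g \mapsto \zeta_D g$, write $\Phi_{2}\colon J_{k,1}^{\cusp}(\SL_2(\inte)\ltimes\inte^2) \to S_{k-1/2}^{+}(\Gamma_0(4))$ for the second isomorphism recalled above, and write $\Phi_{3}\colon J_{k,1}^{\cusp}(\SL_2(\inte)\ltimes\inte^2) \to \mathcal{S}_{k}^{\M}(\Sp_4(\inte))$ for the index-shifting map $\phi \mapsto F$, so that by definition $\Ff = \Phi_{3}(\Phi_{2}^{-1}(\Phi_{1}^{-1}(f)))$. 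For a form $h$ with Fourier coefficients $b_{h}(\nu)$, let $h^{c}$ denote the form with Fourier coefficients $\overline{b_{h}(\nu)}$; since each of the four spaces occurring here is defined over $\rat$, each is stable under $h \mapsto h^{c}$, so that $f^{c}$ and all intermediate objects make sense in the appropriate spaces. I would then show that each $\Phi_{i}$ commutes with the operation $(\cdot)^{c}$; this automatically forces $\Phi_{1}^{-1}$ and $\Phi_{2}^{-1}$ to commute with $(\cdot)^{c}$ as well, and composing gives $\Ff^{c} = \Phi_{3}(\Phi_{2}^{-1}(\Phi_{1}^{-1}(f^{c}))) = F_{f^{c}}$, which is the claim.

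The commutation with $(\cdot)^{c}$ is immediate for each $\Phi_{i}$ from the explicit Fourier-coefficient formulas. For $\Phi_{1}$, the $n$-th coefficient of $\zeta_D g$ is $\sum_{d\mid n}(\tfrac{D}{d})\,d^{k-2}\,c_{g}(|D|n^2/d^2)$, a finite combination of the $c_{g}$'s with coefficients in $\rat$ (Kronecker symbols and the integers $d^{k-2}$), so replacing $g$ by $g^{c}$ conjugates every coefficient of the image. For $\Phi_{2}$, one uses the standard Eichler--Zagier fact that the Fourier coefficient $c(D,r)$ of an index-$1$ Jacobi cusp form depends only on $D = r^2-4n$, so that $\Phi_{2}$ merely re-records these as the coefficients $c(D)$ of a half-integral weight form; a relabeling of indices with no rescaling visibly intertwines $(\cdot)^{c}$. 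For $\Phi_{3}$, one uses the formula $A_{F}(n,r,m)=\sum_{d\mid\gcd(n,r,m)}d^{k-1}\,c_{\phi}\!\left(\tfrac{4nm-r^2}{d^2},\tfrac{r}{d}\right)$ recalled above, which again expresses every Fourier coefficient of $F=\Phi_{3}(\phi)$ as an $\inte$-linear combination of those of $\phi$.

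I do not expect any genuine obstacle: the whole content is the $\rat$-rationality (in fact integrality) of the transition formulas between Fourier coefficients at each stage, together with keeping careful track of the directions of the three isomorphisms and of the standard fact that each of the four spaces is defined over $\rat$ (which is what permits $f^{c}$, $g^{c}$ and $\phi^{c}$ to be formed inside the same spaces). An alternative I would mention but not carry out: by $\cmplx$-linearity of the correspondence it suffices to treat $f$ a level-$1$ newform, whose Fourier coefficients are real, so that by Corollary~\ref{thm:skfourier} the Fourier coefficients of $\Ff$ are real as well and $\Ff^{c}=\Ff=F_{f}=F_{f^{c}}$; the map $f\mapsto f^{c}$ then only conjugates the scalars in a newform decomposition while fixing each newform. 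The direct argument is preferable since it avoids having to establish reality of the lift's coefficients.
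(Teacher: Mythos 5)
Your proposal is correct and is essentially the paper's own argument: the proof there simply observes that each of the three isomorphisms in the Saito--Kurokawa chain commutes with conjugation of Fourier coefficients (e.g.\ $g_{f}^{c}$ and $g_{f^{c}}$ both map to $f^{c}$, hence coincide), which is exactly what you verify, just with the explicit rational transition formulas written out. No gap; your version is merely a more detailed rendering of the same computation.
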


\begin{proof} This is straightforward by viewing the lift through the isomorphisms described above.  For example, if we let $g_{f}$ denote the half-integral weight modular form corresponding to $f$, then just observe that $g_{f}^{c}$ maps to $f^{c}$ under the given isomorphism, as does $g_{f^{c}}$.  Thus, we must have $g_{f^{c}} = g_{f}^{c}$.  The others are even more obvious. 
\end{proof} 

We will also make use of the following three results. 

\begin{theorem}\label{thm:innerproducts} (\cite{kohnenskoruppa}, \cite{kohnenzagier}) Let $f \in S_{2k-2}(\SL_2(\inte))$ be a newform, $\Ff \in \mathcal{S}_{k}^{\M}(\Sp_4(\inte))$
the corresponding Saito-Kurokawa lift, and $g(z) = \sum c_{g}(n)q^{n}$ the weight $k-1/2$ cusp form corresponding
to $f$.  We have the following inner product relation
\begin{equation*}
\langle \Ff, \Ff \rangle = \frac{(k-1)}{2^5 3^2 \pi }\, \cdot \frac{ c_{g}(|D|)^2}{ |D|^{k-3/2}}\cdot \, \frac{L(k,f)}{L(k-1,f,\chi_{D})} \, \langle f, f \rangle
\end{equation*}
where $D$ is a fundamental discriminant so that $(-1)^{k-1} D > 0$ and $\chi_{D}$ is the character associated to the field $\rat(\sqrt{D})$.
\end{theorem}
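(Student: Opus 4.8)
The plan is to realize this formula as the synthesis of two classical computations, tracking all constants along the chain of isomorphisms $f \leftrightarrow g \leftrightarrow \phi \leftrightarrow \Ff$ recalled above, where $g \in S_{k-1/2}^{+}(\Gamma_0(4))$ corresponds to $f$ and $\phi \in J_{k,1}^{\cusp}(\Gamma_1^{\J})$ to $g$. The two ingredients are: (i) the evaluation of the Petersson norm of a Saito--Kurokawa lift due to Kohnen and Skoruppa, which relates $\langle \Ff,\Ff\rangle$ to the norm of the underlying Jacobi form (equivalently, of $g$) and introduces the factor $L(k,f)$; and (ii) the Waldspurger-type formula of Kohnen and Zagier relating $c_g(|D|)^2$ to the central twisted value $L(k-1,f,\chi_D)$.

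First I would use the explicit descriptions of the Maass lift $\phi \mapsto \Ff = \sum_{m\ge 0}(V_m\phi)\, e(m\tau')$ and of the isomorphism $g \mapsto \phi$ to reduce to an identity of the form
\[
\langle \Ff,\Ff\rangle = c_1(k)\, L(k,f)\, \langle g,g\rangle ,
\]
with $c_1(k)$ an explicit product of a rational number, a power of $2$, a power of $\pi$, and a quotient of $\Gamma$-values. The factor $L(k,f)$ here is produced by the Rankin--Selberg unfolding of \cite{kohnenskoruppa}: computing $\langle\Ff,\Ff\rangle$ from the Fourier--Jacobi expansion of $\Ff$ assembles the contributions $\langle V_m\phi, V_m\phi\rangle$ into a Dirichlet series which, for a Saito--Kurokawa lift, is (up to $\zeta$-factors) $L(s,f)$ evaluated at the relevant point.

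Next I would invoke the Kohnen--Zagier theorem. In the weight normalization used here, where $f$ has weight $2k-2$ and hence $g$ weight $k-1/2$, it reads, for a fundamental discriminant $D$ with $(-1)^{k-1}D>0$,
\[
\frac{c_g(|D|)^2}{\langle g,g\rangle} = c_2(k)\, |D|^{k-3/2}\, \frac{L(k-1,f,\chi_D)}{\langle f,f\rangle},
\]
with $c_2(k)$ again explicit (containing $\Gamma(k-1)$ and $\pi^{-(k-1)}$); one uses here that $c_g(|D|)$ may be taken real, since $f$ has totally real Fourier coefficients and $g$ can be normalized accordingly, so $|c_g(|D|)|^2=c_g(|D|)^2$, and that the hypothesis $(-1)^{k-1}D>0$ is exactly the condition making the root number of $L(s,f,\chi_D)$ equal to $+1$. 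Solving for $\langle g,g\rangle$ and substituting into the displayed norm identity cancels $\langle g,g\rangle$ and leaves
\[
\langle \Ff,\Ff\rangle = \frac{c_1(k)}{c_2(k)}\cdot \frac{c_g(|D|)^2}{|D|^{k-3/2}}\cdot \frac{L(k,f)}{L(k-1,f,\chi_D)}\,\langle f,f\rangle,
\]
so that all that remains is to verify $c_1(k)/c_2(k) = (k-1)/(2^5 3^2 \pi)$.

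That last verification is the real work, and the main obstacle is the reconciliation of normalization conventions. The inner products above are the ones fixed in Section~\ref{sec:notation} (in particular $\langle\,\cdot\,\rangle$ on $\mathcal{S}_k(\Sp_4(\inte))$ has no covolume normalization, while the elliptic, half-integral, and Jacobi products use the specific $y$-powers recorded there), whereas \cite{kohnenskoruppa} and \cite{kohnenzagier} use their own; transporting everything to a common convention brings in index factors such as $[\SL_2(\inte):\Gamma_0(4)]$ and further powers of $2$ and $\pi$ coming from the theta decomposition of $\phi$ and from the operators $V_m$. One must then check that the $\Gamma$-quotients in $c_1(k)$ and $c_2(k)$ cancel, leaving only the linear factor $k-1$. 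I would carry this out by translating all conventions back to those of \cite{eichlerzagier} and \cite{jimbrown} and recomputing $c_1(k)$ and $c_2(k)$ there directly, rather than quoting the source constants.
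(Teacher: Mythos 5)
The paper offers no proof of this theorem; it is quoted directly from \cite{kohnenskoruppa} and \cite{kohnenzagier}, and your proposal — combining the Kohnen--Skoruppa evaluation $\langle \Ff,\Ff\rangle = c_1(k)L(k,f)\langle g,g\rangle$ with the Kohnen--Zagier formula for $c_g(|D|)^2/\langle g,g\rangle$ and matching constants across the normalizations of Section~\ref{sec:notation} — is exactly the derivation those citations encode. Your structural points (the weight shift putting the twisted central value at $k-1$ with $|D|^{k-3/2}$, the sign condition $(-1)^{k-1}D>0$, and the reality of $c_g(|D|)$ for level-one newforms) are all correct, so this is the same approach and the only outstanding item is the constant bookkeeping you already flag.
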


\begin{theorem}\label{thm:standardzetafactorization}(\cite{jimbrown}, Theorem 3.10) Let $N$ be a positive integer, $\Sigma$ the set of primes dividing $N$, and $\chi$ a 
Dirichlet character of conductor $N$.  
Let $f \in S_{2k-2}(\SL_2(\inte))$ be a newform and $\Ff$ the corresponding Saito-Kurokawa lift of $f$.  The standard zeta function of $\Ff$ factors 
as 
\begin{equation*}
L^{\Sigma}_{\st}(2s, \Ff, \chi) = L^{\Sigma}(2s-2,\chi) L^{\Sigma}(2s+k-3,f,\chi)L^{\Sigma}(2s+k-4,f,\chi).
\end{equation*}
\end{theorem}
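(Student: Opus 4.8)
The plan is to prove the identity Euler factor by Euler factor. Both sides are Euler products over the primes $\ell\notin\Sigma$: the left side by the very definition of $L^{\Sigma}_{\st}(\,\cdot\,,\Ff,\chi)$ as the product over $\ell\notin\Sigma$ of the inverses of $W_{\ell}(\chi(\ell)\ell^{-2s})$, and the right side because each of $L^{\Sigma}(2s-2,\chi)$, $L^{\Sigma}(2s+k-3,f,\chi)$, and $L^{\Sigma}(2s+k-4,f,\chi)$ is such a product. So I would fix a prime $\ell\nmid N$ and match the local factors there. Since $\chi(\ell)\neq 0$ and enters every factor in exactly the same way, it is enough to establish the untwisted identity and then substitute $\chi(\ell)\ell^{-2s}$ for $\ell^{-2s}$ everywhere at the very end.

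The key input is the Saito--Kurokawa relation of Theorem~\ref{thm:skcorrespondence},
\begin{equation*}
L_{\spin}(s,\Ff)=\zeta(s-k+1)\zeta(s-k+2)L(s,f),
\end{equation*}
which identifies the spin roots $\alpha_0,\alpha_0\alpha_1,\alpha_0\alpha_2,\alpha_0\alpha_1\alpha_2$ of $Q_{\ell}$ with the multiset $\{\ell^{k-1},\ell^{k-2},\alpha_f(\ell),\beta_f(\ell)\}$, where $\alpha_f(\ell)\beta_f(\ell)=\ell^{2k-3}$ are the $\ell$-th Satake parameters of the weight $2k-2$ newform $f$. From this I would extract the standard Satake parameters $\alpha_1,\alpha_2$ of $\Ff$. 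Using the degree-two relation $\alpha_0^{2}\alpha_1\alpha_2=\ell^{2k-3}$ one has $\alpha_0\cdot(\alpha_0\alpha_1\alpha_2)=(\alpha_0\alpha_1)(\alpha_0\alpha_2)=\ell^{2k-3}$, so the four spin roots break into two pairs each of product $\ell^{2k-3}$; the only such splitting of $\{\ell^{k-1},\ell^{k-2},\alpha_f(\ell),\beta_f(\ell)\}$ is $\{\ell^{k-1},\ell^{k-2}\}\sqcup\{\alpha_f(\ell),\beta_f(\ell)\}$. A short computation, using $\alpha_f(\ell)\beta_f(\ell)=\ell^{2k-3}$ to rewrite the inverses, then shows that whichever of these two pairs plays the role of $\{\alpha_0,\alpha_0\alpha_1\alpha_2\}$ and however the labels are assigned inside it, one obtains the same multiset
\begin{equation*}
\{\ell^{2}\alpha_1,\ \ell^{2}\alpha_1^{-1},\ \ell^{2}\alpha_2,\ \ell^{2}\alpha_2^{-1}\}=\{\alpha_f(\ell)\ell^{3-k},\ \beta_f(\ell)\ell^{3-k},\ \beta_f(\ell)\ell^{4-k},\ \alpha_f(\ell)\ell^{4-k}\}.
\end{equation*}

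Substituting this into $W_{\ell}(t)=(1-\ell^{2}t)\prod_{i=1}^{2}(1-\ell^{2}\alpha_i t)(1-\ell^{2}\alpha_i^{-1}t)$ with $t=\ell^{-2s}$, the linear factor $1-\ell^{-(2s-2)}$ is the $\ell$-Euler factor of $\zeta(2s-2)$; the pair $(1-\alpha_f(\ell)\ell^{-(2s+k-3)})(1-\beta_f(\ell)\ell^{-(2s+k-3)})$ is the $\ell$-Euler factor of $L(2s+k-3,f)$; and the remaining pair is that of $L(2s+k-4,f)$. Taking inverses, multiplying over $\ell\nmid N$, and then inserting $\chi(\ell)$ --- which turns $\zeta$ into $L(\,\cdot\,,\chi)$ and $L(\,\cdot\,,f)$ into $L(\,\cdot\,,f,\chi)$ --- gives the asserted factorization.

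I expect the only subtle point to be the extraction of $\alpha_1,\alpha_2$: the Satake parameters are defined only up to the Weyl group action, so one must check that the ambiguity in the pairing of the four spin roots and in the labeling within a pair washes out of the final multiset $\{\alpha_i^{\pm 1}\}$ --- and this is precisely what the relation $\alpha_f(\ell)\beta_f(\ell)=\ell^{2k-3}$ accomplishes, collapsing the two a priori distinct answers into one. At the finitely many primes where $\ell^{k-1},\ell^{k-2},\alpha_f(\ell),\beta_f(\ell)$ fail to be pairwise distinct, the splitting into product-$\ell^{2k-3}$ pairs need no longer be unique, but any valid splitting still produces a pair of product $\ell^{2k-3}$ and the identical local computation, so those primes cause no difficulty.
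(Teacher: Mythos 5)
Your proof is correct and is essentially the argument behind the cited result: the paper itself only quotes \cite{jimbrown}, Theorem 3.10, and the proof there likewise reads off the Satake parameters of $\Ff$ from the spinor factorization $L_{\spin}(s,\Ff)=\zeta(s-k+1)\zeta(s-k+2)L(s,f)$ together with $\alpha_0^2\alpha_1\alpha_2=\ell^{2k-3}$ and substitutes them into the local factor $W_{\ell}$. Your extra care with the Weyl-group ambiguity (checking that both pairings yield the same multiset $\{\ell^{2}\alpha_i^{\pm1}\}$ via $\alpha_f(\ell)\beta_f(\ell)=\ell^{2k-3}$) is sound and handles the only delicate point.
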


\begin{proposition} Let $f \in S_{2k-2}(\SL_2(\inte))$ be a newform, $\rho_{f}$ the associated $\ell$-adic Galois representation, $\Ff$ the Saito-Kurokawa lift, and $\rho_{\Ff}$ the associated 4-dimensional $\ell$-adic Galois representation.  Then one has
\begin{equation*}
\rho_{\Ff} = \begin{pmatrix} \varepsilon^{k-2} & & \\ & \rho_{f} & \\ & & \varepsilon^{k-1} \end{pmatrix}
\end{equation*}
where $\varepsilon$ is the $\ell$-adic cyclotomic character and blank spaces in the matrix are assumed to be $0$'s of the appropriate size.
\end{proposition}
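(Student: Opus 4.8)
The plan is to pin down $\rho_{\Ff}$ through its characteristic polynomials of Frobenius at unramified primes and then apply the Chebotarev density theorem. Recall that the four-dimensional $\ell$-adic representation attached to a Siegel eigenform $\Ff \in \mathcal{S}_{k}(\Sp_{4}(\inte))$ is unramified outside $\ell$ and is characterized, up to semisimplification, by the requirement that for every prime $p \neq \ell$ the characteristic polynomial of $\rho_{\Ff}(\Frob_p)$ equals the local spinor polynomial $Q_{p}(X)$, where $L_{\spin}(s,\Ff) = \prod_{p} Q_{p}(p^{-s})^{-1}$. The first step is therefore to read $Q_{p}$ off the factorization of $L_{\spin}(s,\Ff)$ provided by Theorem \ref{thm:skcorrespondence}.

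By Theorem \ref{thm:skcorrespondence} we have $L_{\spin}(s,\Ff) = \zeta(s-k+1)\zeta(s-k+2)L(s,f)$. Since the $L$-function of $\varepsilon^{j}$ is $\zeta(s-j)$ and $L(s,\rho_f)=L(s,f)$, the right-hand side is the $L$-function of the representation $\varepsilon^{k-2}\oplus\rho_f\oplus\varepsilon^{k-1}$; comparing Euler factors at $p \neq \ell$ --- recalling that the local factor of $L(s,f)$ is $[(1-\alpha_f(p)p^{-s})(1-\beta_f(p)p^{-s})]^{-1}$ with $\alpha_f(p)\beta_f(p)=p^{2k-3}$ --- gives
\begin{equation*}
Q_{p}(X) = (1-p^{k-2}X)(1-p^{k-1}X)(1-\alpha_f(p)X)(1-\beta_f(p)X),
\end{equation*}
which is exactly the characteristic polynomial of $(\varepsilon^{k-2}\oplus\rho_f\oplus\varepsilon^{k-1})(\Frob_p)$. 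Both $\rho_{\Ff}$ and $\varepsilon^{k-2}\oplus\rho_f\oplus\varepsilon^{k-1}$ are unramified outside $\ell$ and thus agree on all Frobenius characteristic polynomials, so the Brauer-Nesbitt theorem combined with Chebotarev density yields $\rho_{\Ff}^{\semi}\cong \varepsilon^{k-2}\oplus\rho_f\oplus\varepsilon^{k-1}$.

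It remains to upgrade this isomorphism of semisimplifications to a genuine direct sum decomposition, and I expect this to be the only point requiring input beyond formal manipulation of $L$-functions. The three composition factors are pairwise non-isomorphic --- $\rho_f$ is two-dimensional and irreducible, being attached to a cuspidal newform, while $\varepsilon^{k-2}$ and $\varepsilon^{k-1}$ are distinct characters --- so what must be excluded is a nonsplit extension linking them inside $\rho_{\Ff}$. This decomposability reflects the structure of $\Ff$ as a Saito-Kurokawa lift: $\Ff$ generates a CAP (non-tempered) automorphic representation of $\GSp_{4}(\A)$, and the Galois representation attached to such a representation is known to split as the direct sum of the pieces dictated by its Satake parameters; one invokes here the construction of Galois representations attached to genus-two Siegel modular forms together with the known structure theory in the Saito-Kurokawa case. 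Granting this, the proposition follows by combining it with the semisimplification computed above. The main obstacle is precisely this semisimplicity statement; the $L$-function bookkeeping and the Chebotarev step are routine once Theorem \ref{thm:skcorrespondence} is available.
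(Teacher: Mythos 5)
Your argument is essentially the paper's: factor the spinor $L$-function via Theorem \ref{thm:skcorrespondence}, match characteristic polynomials of Frobenius, and conclude by Chebotarev and Brauer--Nesbitt together with the semisimplicity of $\rho_{\Ff}$. The one step you flag as the ``main obstacle''---upgrading from the semisimplification to a genuine direct sum---is actually free here, since the representation $\rho_{\Ff}$ of Theorem \ref{thm:existencegalreps} is semi-simple by construction, so no appeal to CAP structure theory is needed.
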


\begin{proof} This fact is used in \cite{jimbrown}, though not formally stated as a proposition there.  It follows directly from the decomposition of the Spinor $L$-function of $\Ff$, the fact that $\rho_{\Ff}$ is necessarily semi-simple, and the Brauer-Nesbitt theorem. 
\end{proof}

\section{Congruences between Saito-Kurokawa lifts and non-Maass forms}\label{sec:congruences}

In this section we will use the results obtained in section \ref{sec:eseries} as well as arguments originally used in \cite{jimbrown} to produce a congruence between a Saito-Kurokawa lift $\Ff$ and a form $G \in \mathcal{S}_{k}^{\NM}(\Sp_4(\inte))$.  This congruence will provide the foundation for providing the lower bound on the size of the appropriate Selmer group in section \ref{sec:finalresult}.

Let $k>3$ be an integer and $p$ a prime so that $p > 2k-2$ and $\gcd(p,N)=1$.  Let $E$ be a finite extension of $\rat_{p}$ with ring of integers $\ROI$ and uniformizer $\varpi$.  We fix an embedding of $E$ into $\cmplx$ that is compatible with the embeddings fixed in section \ref{sec:notation}.  Let $\p$ be the prime of $\ROI$ lying over $p$. 

Given two Siegel modular forms $F$ and $G$, we write $F \equiv G (\modu \varpi^{m})$ to mean that $\ord_{\varpi}(A_{F}(T) - A_{G}(T)) \geq m$ for all $T$.  If $F$ and $G$ are Hecke eigenforms, we denote a congruence between the eigenvalues as $F \equiv_{\ev} G (\modu \varpi^{m})$. 

Let $f \in S_{2k-2}(\SL_2(\inte))$ be a newform and $\Ff$ the Saito-Kurokawa lift of $f$.  As in \cite{jimbrown}, we begin by replacing the normalized Eisenstein series $D_{E\Slash{\rho}}(\diag[Z,W], (5-k)/2)$ by its trace so as to work with level 1.  Write
\begin{equation*}
\mathcal{E}(Z,W) = \sum_{\gamma \times \delta \in (\Sp_4(\inte) \times \Sp_4(\inte))/(\Gamma_0^4(N) \times \Gamma_0^4(N))} (D_{E\Slash{\rho}})\Slash{\gamma \times \delta}(\diag[Z,W],(5-k)/2).
\end{equation*}
We then have that $\mathcal{E}(Z,W)$ is a Siegel cusp form of weight $k$ and level 1 in each variable separately.  In addition, one sees that the coefficients of $\mathcal{E}(Z,W)$ remain in $\intep[\chi]$ by an application of the $q$-expansion principle for Siegel modular forms (\cite{chaifaltings}, Proposition 1.5). 

Let $f_0 = f, f_1, \dots, f_{m}$ be an orthogonal basis of newforms for $S_{2k-2}(\SL_2(\inte))$ and let $F_0=\Ff, F_1 = F_{f_{1}}, \dots, F_{m}= F_{f_{m}}, F_{m+1}, \dots, F_{r}$ be an orthogonal basis of eigenforms of $\mathcal{S}_{k}(\Sp_4(\inte))$.  Note we are implicitly using Corollary \ref{thm:skcomplexconjugation} and Theorem \ref{thm:innerproducts} here.  We enlarge $E$ here if necessary so that both of the bases are defined over $\ROI$ and $\ROI$ contains the values of $\chi$.  We write 
\begin{equation*}
\mathcal{E}(Z,W) = \sum_{i,j} c_{i,j} F_{i}(Z)F_{j}^{c}(W)
\end{equation*}
for some $c_{i,j} \in \cmplx$.  The reader is urged to see (\cite{shimura83}, Lemma 1.1) or \cite{garrett2} for the general principles of such expansions.  In \cite{jimbrown} the assumption was made that $f$ have real Fourier coefficients to remove the complex conjugation from the formulas. Though this assumption will eventually be necessary, we do not immediately make it.  This adds some minor complexity to the argument, but provides a much better picture of why the assumption is necessary for the method and is not just one made out of convenience.

\begin{proposition}\label{prop:diagonalizeeseries} With the notation as above, we have
\begin{equation}\label{eqn:eseriesexpansion}
\mathcal{E}(Z,W) = \sum_{0 \leq i \leq r} c_{i,i} F_{i}(Z) F_{i}^{c}(W),
\end{equation}
i.e., $c_{i,j} = 0$ unless $i = j$.  In particular, one has 
\begin{equation*}
c_{i,i} = \frac{\mathcal{A}_{k,N} L^{\Sigma}_{\st}(5-k,F_{i}, \chi)}{\pi^3 \langle F_{i}^{c}, F_{i}^{c} \rangle}.
\end{equation*}
\end{proposition}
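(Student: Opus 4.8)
The plan is to diagonalise $\mathcal{E}(Z,W)$ by computing, in the $W$-variable, the Petersson product $\langle\mathcal{E}(Z,\cdot),F_j^c\rangle$ in two different ways --- once from the bilinear expansion and once from Shimura's inner product relation \eqref{eqn:shimuraeq1} --- and then comparing.

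As a preliminary step I would record that $\{F_0^c,\dots,F_r^c\}$ is again an orthogonal basis of $\mathcal{S}_k(\Sp_4(\inte))$: this space admits a model over $\rat$, hence is stable under the coefficient-wise conjugation $F\mapsto F^c$, which in the variable $Z$ is realised by $F^c(Z)=\ov{F(-\ov Z)}$; the involution $Z\mapsto-\ov Z$ is compatible with the $\Sp_4(\inte)$-action (up to conjugation by an element of $\GSp_4(\inte)$, which normalises $\Sp_4(\inte)$) and preserves the invariant measure, so $\langle F^c,G^c\rangle=\ov{\langle F,G\rangle}$. In particular $\langle F_l^c,F_j^c\rangle=0$ for $l\neq j$ and $\langle F_j^c,F_j^c\rangle=\langle F_j,F_j\rangle>0$. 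Moreover each $F_j$ has level $1$ and $\varsigma\in\Sp_4(\inte)$, so $F_j\Slash\varsigma=F_j$ and $(F_j\Slash\varsigma)^c=F_j^c$; thus \eqref{eqn:shimuraeq1} is an identity directly involving $F_j^c$.

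Now fix $Z$ and regard $\mathcal{E}(Z,\cdot)$ as a level-$1$ cusp form in $W$. Pairing the expansion $\mathcal{E}(Z,W)=\sum_{i,l}c_{i,l}F_i(Z)F_l^c(W)$ against $F_j^c$ and using the orthogonality just recorded gives
\[
\langle\mathcal{E}(Z,\cdot),F_j^c\rangle=\Bigl(\sum_i c_{i,j}F_i(Z)\Bigr)\langle F_j^c,F_j^c\rangle .
\]
On the other hand $\mathcal{E}$ is obtained from $D_{E\Slash\rho}(\diag[\cdot,\cdot],(5-k)/2)$ by the level-$N$-to-level-$1$ trace in each variable; the trace is adjoint to the inclusion of level-$1$ forms into level-$N$ forms (up to a factor involving $[\Sp_4(\inte):\Gamma_0^4(N)]$, which is prime to $p$ because $p\nmid N$ and $p>2k-2$), and $F_j$ has level $1$, so \eqref{eqn:shimuraeq1} applied to $F_j$ yields
\[
\langle\mathcal{E}(Z,\cdot),F_j^c\rangle=\pi^{-3}\,\mathcal{A}_{k,N}\,L^{\Sigma}_{\st}(5-k,F_j,\chi)\,F_j(Z).
\]
Equating the two expressions and invoking the linear independence of $F_0,\dots,F_r$ over $\cmplx$ forces $c_{i,j}=0$ for all $i\neq j$ and yields the stated formula for $c_{i,i}$.

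I expect the main obstacle to be the normalisation bookkeeping in passing from Shimura's formula for $D_{E\Slash\rho}$ at level $N$ to the corresponding statement for the traced form $\mathcal{E}$ at level $1$: one has to keep straight the translates by $\rho$ and $\varsigma$, the two simultaneous traces (in $Z$ and in $W$), and the chosen normalisation of the Petersson product, and confirm that whatever residual constant appears is a $p$-adic unit, so that the clean formula for $c_{i,i}$ --- and in particular $\ord_p(c_{i,i})$, which is what the argument in the next sections actually uses --- comes out correctly. The orthogonality of the conjugated basis and the legitimacy of the bilinear expansion (\cite{shimura83}, Lemma 1.1) are routine and present no genuine difficulty.
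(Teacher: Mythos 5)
Your proposal is correct and follows essentially the same route as the paper: compute $\langle \mathcal{E}(Z,\cdot),F_{j}^{c}\rangle$ once via orthogonality of the expansion and once via the trace identity $\langle \mathcal{E}(Z,W), F_j^{c}(W) \rangle_{\Sp_4(\inte)} = \langle D_{E\Slash{\rho}}(\diag[Z,W], (5-k)/2), F_j^{c}(W) \rangle_{\Gamma_0^4(N)}$ combined with $F_j\Slash{\varsigma}=F_j$ and Shimura's relation \eqref{eqn:shimuraeq1}, then compare using linear independence of the $F_i$. Your extra remarks on the orthogonality of the conjugated basis make explicit what the paper handles by appeal to Corollary \ref{thm:skcomplexconjugation} and Theorem \ref{thm:innerproducts}.
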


\begin{proof}  Fix $j_0$ with $0 \leq j_0 \leq r$ and consider the inner product $\langle \mathcal{E}(Z,W), F^{c}_{j_0}(W) \rangle$.  Using the orthogonality of the basis we have
\begin{equation}\label{eqn:diag1}
\langle \mathcal{E}(Z,W), F^{c}_{j_0}(W) \rangle = \sum_{0 \leq i \leq r} c_{i,j_0} \langle F_{j_0}^{c}, F_{j_0}^{c} \rangle F_{i}(Z).
\end{equation}
On the other hand, combining the facts that $$\langle \mathcal{E}(Z,W), \Ff^{c}(W) \rangle_{\Sp_4(\inte)} = \langle D_{E\Slash{\rho}}(\diag[Z,W], (5-k)/2), \Ff^{c}(W) \rangle_{\Gamma_0^4(N)}$$ and $\Ff\Slash{\varsigma} = \Ff$ since $\Ff$ is of full level with equation (\ref{eqn:shimuraeq1}) gives 
\begin{equation*}
\langle \mathcal{E}(Z,W), F^{c}_{j_0}(W) \rangle = \pi^{-3} \,\mathcal{A}_{k,N}
L^{\Sigma}_{\st}(5-k,F_{j_0}, \chi) F_{j_0}(Z).
\end{equation*}
Combining this with equation (\ref{eqn:diag1}) and using that the $F_{i}$ form a basis allows us to conclude that $c_{i,j_0} = 0$ unless $i = j_0$ and if $i = j_0$ we have
\begin{equation*}
c_{j_0,j_0} = \frac{\mathcal{A}_{k,N} L_{\st}^{\Sigma}(5-k,F_{j_0},\chi)}{\pi^3 \langle F_{j_0}^c, F_{j_0}^{c} \rangle}.
\end{equation*}
 Since $j_0$ was arbitrary, we have
\begin{equation*}
\mathcal{E}(Z,W) = \sum_{0 \leq i \leq r} c_{i,i} F_{i}(Z) F_{i}^{c}(W)
\end{equation*}
with the $c_{i,i}$ as desired.
\end{proof}

As our goal is to produce a congruence between $\Ff$ and a Siegel cusp form that is not a CAP form, the first step is a theorem which
allows us to kill all of $F_{i}$ with $1 \leq i \leq m$.  First we need the following result giving the existence of complex periods.

\begin{theorem} (\cite{shimuraperiods}, Theorem 1) Let $f \in S_{2k-2}(\SL_2(\inte), \ROI)$ be a newform.  There exist complex periods $\Omega_{f}^{\pm}$ such that for each integer $m$ with $0 < m < 2k-2$ and every Dirichlet character $\chi$ one has 
\begin{equation*}
\frac{L(m,f,\chi)}{\tau(\chi)(2\pi i)^{m}} \in \left\{ \begin{array}{ll} \Omega_{f}^{-} \ROI_{\chi} & \text{if $\chi(-1) = (-1)^{m}$,} \\
                                \Omega_{f}^{+} \ROI_{\chi} & \text{if $\chi(-1) = (-1)^{m-1}$,} \end{array} \right.
\end{equation*}
where $\tau(\chi)$ is the Gauss sum of $\chi$ and $\ROI_{\chi}$ is the extension of $\ROI$ generated by the values of $\chi$.  We write $L_{\alg}(m,f,\chi)$ to denote the value $\frac{L(m,f,\chi)}{\tau(\chi)(2\pi i)^{m}}$. 
\end{theorem}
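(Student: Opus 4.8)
The plan is to prove this via the Eichler--Shimura isomorphism and twisted modular symbols; this is the classical route, and I would follow Shimura's treatment. First I would attach to $f$ its period cocycle: the assignment $\gamma \mapsto \int_{z_0}^{\gamma z_0} f(z)\,(zX-1)^{2k-4}\,dz$ defines a class $[\omega_f] \in \coh^1_{\mathrm{par}}(\SL_2(\inte), V(\cmplx))$, where $V = \operatorname{Sym}^{2k-4}$ of the standard two-dimensional representation. The point of this realization is twofold: (i) the coefficients of the associated period polynomial $\int_0^{i\infty} f(z)(zX-1)^{2k-4}\,dz$ are, up to explicit factors in $\rat^\times$, the values $(2\pi i)^{-m}(m-1)!\,L(m,f)$ for $1 \le m \le 2k-3$; and (ii) the involution $\iota$ induced by $\diag(-1,1)$ splits everything into a $+$ part and a $-$ part, with the coefficient carrying $L(m,f)$ lying in the part of sign $(-1)^{m-1}$.

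Next I would install the integral structure. The group $\coh^1_{\mathrm{par}}(\SL_2(\inte), V(\ROI))$ is a finitely generated $\ROI$-module stable under the Hecke operators, which act with eigenvalues $a_f(n) \in \ROI$; after localizing at the maximal ideal of the Hecke algebra corresponding to $f$ it becomes free of rank two, and $\iota$ decomposes it as a direct sum of two free rank-one $\ROI$-modules $M_f^{+} \oplus M_f^{-}$. Choosing $\ROI$-generators $x_f^{\pm}$ of $M_f^{\pm}$, I would \emph{define} the periods $\Omega_f^{\pm}$ by $[\omega_f]^{\pm} = \Omega_f^{\pm}\, x_f^{\pm}$, where $[\omega_f]^{\pm}$ is the projection of $[\omega_f]$ to the sign-$\pm$ summand of $\coh^1_{\mathrm{par}}(\SL_2(\inte), V(\cmplx))$; these are well-defined up to $\ROI^\times$, which is all the statement requires.

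Finally I would handle the twist. For a Dirichlet character $\chi$ of conductor $N$, the Birch--Manin formula writes $L(m,f,\chi)$ as $\tau(\chi)$ times a finite $\ROI_\chi$-linear combination, indexed by $a \bmod N$ with coefficients $\chi(a)^{-1}$, of the values at the cusps $a/N$ of the $m$-th modular symbol attached to $f$ --- i.e.\ of pairings of $[\omega_f]$ against $\inte$-rational elements of the relevant relative homology. Pairing instead with $x_f^{\pm}$ lands this sum in $\ROI_\chi$, and the elementary computation with $\iota$ shows that the sum picks out the sign-$\pm$ class precisely when $\chi(-1)(-1)^{m-1} = \pm 1$, so that $\chi(-1) = (-1)^{m-1}$ selects $\Omega_f^{+}$ and $\chi(-1) = (-1)^{m}$ selects $\Omega_f^{-}$. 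Dividing through by $\tau(\chi)(2\pi i)^m$ and substituting the definition of $\Omega_f^{\pm}$ yields the stated membership.

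The step I expect to be the main obstacle is the integrality bookkeeping: one must verify that the $f$-localized parabolic cohomology over $\ROI$ really is free of rank two with rank-one $\pm$-eigenspaces (so that the $x_f^{\pm}$, and hence $\Omega_f^{\pm}$, exist with only an $\ROI^\times$ ambiguity), and that the twisted modular-symbol sums lie in $\ROI_\chi$ on the nose rather than in some localization $\ROI_\chi[1/c]$. Establishing this cleanly --- which is precisely the content of \cite{shimuraperiods} --- requires care with the comparison between de Rham and Betti cohomology and with denominators introduced by the twisting operators; for the present paper it suffices to quote the result.
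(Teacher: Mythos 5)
The paper gives no proof of this statement---it is quoted directly from Shimura's \emph{On the periods of modular forms}---so there is no internal argument to compare against; your sketch is the standard Eichler--Shimura/modular-symbols proof and is correct in outline, including the parity computation that matches $\chi(-1)=(-1)^{m-1}$ with $\Omega_f^{+}$. The one point worth stressing is the one you already flag: as stated here (with $\ROI_{\chi}$ a \emph{ring}, since in this paper $\ROI$ is a ring of integers) this is the integrally normalized form of the theorem, so your definition of $\Omega_f^{\pm}$ as the ratio of $[\omega_f]^{\pm}$ against $\ROI$-generators of the $\pm$-eigenspaces of the $f$-localized integral parabolic cohomology is not optional bookkeeping but the essential content---Shimura's original Theorem~1 only yields membership in the fraction field, and the freeness needed for your $x_f^{\pm}$ to exist is where hypotheses like $p>2k-2$ and irreducibility of $\ov{\rho}_f$ enter.
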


The theorem we will use to kill the $F_{i}$ for $1 \leq i \leq m$ is as follows. 

\begin{theorem} (\cite{jimbrown}, Theorem 5.4) Let $f=f_0, f_1, \dots, f_{m}$ be a basis of newforms of $S_{2k-2}(\SL_2(\inte),\ROI)$ with $k>2$.  Suppose that the residual Galois representation $\overline{\rho}_{f}$ is irreducible and $f$ is ordinary at $\p$.  There exists a Hecke operator $t \in \T_{\ROI}$ such that 
\begin{equation*}
tf_{i} = \left\{ \begin{array}{ll} \alpha f & \text{if $i=0$,}\\
        0 & \text{if $1 \leq i \leq m$,} \end{array} \right.
\end{equation*}
where $\alpha = \frac{u \langle f, f \rangle}{\Omega_{f}^{+} \Omega_{f}^{-}}$, $u$ is a unit in $\ROI$.
\end{theorem}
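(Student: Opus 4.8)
The plan is to build $t$ as an element of the annihilator ideal $\Ann_{\T_{\ROI}}(\wp_{f})$ of the eigenvalue-kernel $\wp_{f}=\ker(\lambda_{f}\colon\T_{\ROI}\to\ROI)$, and then to identify the resulting scalar $\lambda_{f}(t)$ with the asserted period ratio by invoking Hida's congruence number formula. After enlarging $E$ so that every $f_{i}$ is defined over $\ROI$, write $\lambda_{i}\colon\T_{\ROI}\to\ROI$ for the eigenvalue homomorphism of $f_{i}$, so $\lambda_{0}=\lambda_{f}$; since the level is $1$ the $f_{i}$ span $S_{2k-2}(\SL_2(\inte),\ROI)$ and $\T_{\ROI}\otimes_{\ROI}E\cong\prod_{i}E$. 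Because $\ROI$ is a complete discrete valuation ring, the congruence ideal $\eta_{f}:=\lambda_{f}(\Ann_{\T_{\ROI}}(\wp_{f}))$ is principal, and it is nonzero since the product decomposition splits off the $f$-factor.

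The Hecke-algebra bookkeeping is then formal. Given any $t\in\Ann_{\T_{\ROI}}(\wp_{f})$, for $1\le i\le m$ the kernels $\wp_{f}$ and $\ker\lambda_{i}$ become distinct maximal ideals after $\otimes_{\ROI}E$, so $\wp_{f}\not\subseteq\ker\lambda_{i}$; choosing $s\in\wp_{f}$ with $\lambda_{i}(s)\ne0$ and using $ts=0$ forces $\lambda_{i}(t)=0$, hence $tf_{i}=0$. Meanwhile $tf=\lambda_{f}(t)f$ with $\lambda_{f}(t)\in\eta_{f}$. So it suffices to choose $t$ with $\lambda_{f}(t)$ equal to a prescribed generator of $\eta_{f}$, and the real content is the identity $\eta_{f}=\bigl(u\,\langle f,f\rangle/(\Omega_{f}^{+}\Omega_{f}^{-})\bigr)$ for a unit $u\in\ROI$.

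For that I would use the perfect Hecke pairing $\T_{\ROI}\times S_{2k-2}(\SL_2(\inte),\ROI)\to\ROI$, $(t,g)\mapsto a_{1}(tg)$, to identify $\eta_{f}$ with the congruence number of $f$, and then quote Hida's congruence number formula relating the latter to $\langle f,f\rangle/(\Omega_{f}^{+}\Omega_{f}^{-})$. I expect the main obstacle to lie precisely here: one must check that every normalization constant intervening between the algebraically-defined congruence ideal and the analytically-defined period ratio — the bad Euler factors (vacuous in level $1$), powers of $2$ and of small primes, discriminant factors, and the comparison between Shimura's periods $\Omega_{f}^{\pm}$ and Hida's integrally normalized canonical periods — is a $p$-adic unit. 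This is exactly what the hypotheses buy: irreducibility of $\ov{\rho}_{f}$ keeps $\wp_{f}$ away from Eisenstein congruences so that the congruence module behaves as expected, ordinarity of $f$ at $p$ places $\wp_{f}$ in the range where Hida's period comparison applies, and $p>2k-2$ forces the remaining explicit constants to be units. All of this is available in the literature, so the argument reduces to assembling it carefully; no new calculation is required beyond tracking conventions.
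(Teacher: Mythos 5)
Your proposal is essentially the argument behind the cited result: the paper itself only quotes this theorem from \cite{jimbrown} (Theorem 5.4), and the proof there likewise decomposes $\T_{\ROI}\otimes E\cong\prod_i E$, takes $t$ in the annihilator of $\ker\lambda_f$ so that it kills the $f_i$ for $i\geq 1$ and acts on $f$ through the congruence ideal, and then identifies a generator of that ideal with $u\langle f,f\rangle/(\Omega_f^+\Omega_f^-)$ via Hida's congruence number formula together with the comparison of canonical and Shimura periods (Vatsal), which is exactly where the ordinarity and residual irreducibility hypotheses enter. So the proposal is correct and takes the same route; the only content you defer to the literature is the same content the original proof defers.
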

 
From this point on we assume that $f$ is ordinary at $\p$ and $\overline{\rho}_{f}$ is irreducible so we are able to apply this theorem.  We write $F\equiv G (\modu \varpi^{n})$ to indicate that $\ord_{\varpi}(A_{F}(T) - A_{G}(T)) \geq n$ for all $T$, i.e., the congruence is a congruence of Fourier coefficients.

Applying the fact that the Saito-Kurokawa correspondence is Hecke-equivariant we are able to conclude there exists $t^{S} \in \T_{\ROI}^{S}$ so that 
\begin{equation*}
t^{S} F_{i} = \left\{ \begin{array}{ll} \alpha F_{0} & \text{if $i = 0$,}\\
                                        0 & \text{if $1 \leq i \leq m$.} \end{array} \right.
\end{equation*}
Applying $t^{S}$ to equation (\ref{eqn:eseriesexpansion}) we obtain

\begin{equation}\label{eqn:eseriesheckeopexpansion}
t^{S} \mathcal{E}(Z,W) = \alpha c_{0,0} F_{0}(Z)F_{0}^{c}(W) + \sum_{m<j\leq r} c_{j,j} t^{S}F_{j}(Z) F_{j}^{c}(W).
\end{equation}
Before we study $\ord_{\varpi}(\alpha c_{0,0})$, we show how it ``controls'' a congruence between $F_{0} = \Ff$ and a non-CAP eigenform.  

Suppose we can have $\ord_{\varpi}(\alpha c_{0,0}) = M < 0$, i.e., there exists a $\varpi$-unit $\beta$ so that $\alpha c_{0,0} = \varpi^{M} \beta$.  Corollary \ref{thm:skfourier} gives that there exists a $T_0$ so that $\varpi \nmid A_{F_0^{c}}(T_0)$.  We claim that this implies that $c_{i,i} \neq 0$ for at least one $i$ with $i > 0$.  If not, we would have 
\begin{equation*}
t^{S} \mathcal{E}(Z,W) = \varpi^{M} \beta F_{0}(Z)F_{0}^{c}(W).
\end{equation*}
However, since $\mathcal{E}(Z,W)$ has $\varpi$-integral Fourier coefficients, $t^{S}\mathcal{E}(Z,W)$ does as well. Thus, upon multiplying both sides of the equation by $\varpi^{-M}$ we obtain that $F_0(Z)F_0^{c}(W) \equiv 0(\modu \varpi)$, clearly a contradiction.  

We now expand each side of equation (\ref{eqn:eseriesheckeopexpansion}) in terms of $W$, reduce modulo $\varpi$, and equate $T_0^{\text{th}}$ Fourier coefficients.  The $\ROI$-integrality of the Fourier coefficients of $t^{S} \mathcal{E}(Z,W)$ combined with the fact that $M < 0$ gives 
\begin{equation*}
F_0(Z) \equiv - \frac{\varpi^{-M}}{A_{F_{0}^{c}}(T_0) \beta} \sum_{m < j \leq r} c_{j,j} A_{F_{j}^{c}(T_0)} t^{S} F_{j}(Z) (\modu \varpi^{-M}).
\end{equation*}
Set 
\begin{equation*}
G = - \frac{\varpi^{-M}}{A_{F_{0}^{c}}(T_0) \beta} \sum_{m < j \leq r} c_{j,j} A_{F_{j}^{c}(T_0)} t^{S} F_{j}(Z).
\end{equation*}
First, we note that if $G \equiv 0 (\modu \varpi^{-M})$ we obtain a contradiction to Corollary \ref{thm:skfourier}, so we must have a nontrivial congruence.  Secondly, since all of the $F_{j}$'s with $m < j \leq r$ lie in $\mathcal{S}_{k}^{\NM}(\Sp_4(\inte))$ and as we noted earlier this space is stable under Hecke operators, we have that $t^{S} F_{j}$ is in $\mathcal{S}_{k}^{\NM}(\Sp_4(\inte))$ for $m < j \leq r$.  As $G$ is a linear combination of forms in $\mathcal{S}_{k}^{\NM}(\Sp_4(\inte))$, we must have $G \in \mathcal{S}_{k}^{\NM}(\Sp_4(\inte))$.  Thus, if we show that $\ord_{\varpi}(\alpha c_{0,0})$ is less then 0, we have a nontrivial congruence between $\Ff$ and a form in $S_{k}^{\NM}(\Sp_4(\inte))$, in particular, between a Saito-Kurokawa lift and a non-Saito-Kurokawa lift.  

Our discussion of $\alpha c_{0,0}$ will be brief as most of the work done in \cite{jimbrown} applies verbatim to this situation.  Corollary \ref{thm:skcomplexconjugation} allows us to apply Theorem \ref{thm:innerproducts} to $\langle F_0^{c}, F_{0}^{c} \rangle$ as this is $\langle F_{f^{c}}, F_{f^{c}}\rangle$.  Following the same argument as in \cite{jimbrown} we are able to conclude that 
\begin{equation*}
\alpha c_{0,0} = \mathcal{B}_{k,N,D,\chi} \mathcal{L}(k,f,D, \chi) \frac{\langle f, f \rangle}{\langle f^{c}, f^{c} \rangle}
\end{equation*}
where 
\begin{equation*}
\mathcal{L}(k,f,D,\chi) = \frac{L^{\Sigma}(3-k,\chi) L_{\alg}(k-1,f^{c},\chi_{D}) L_{\alg}(1,f,\chi) L_{\alg}(2,f,\chi)}{L_{\alg}(k,f^{c})}
\end{equation*}
and 
\begin{equation*}
\mathcal{B}_{k,N,D,\chi} = \frac{(-1)^{k+1} 2^{2k+4} 3 u |D|^{k} \tau(\chi_{D}) \tau(\chi)^2}{(k-1) [\Sp_2(\inte): \Gamma_0^2(N)] |D|^{3/2} |c_{g_{f^{c}}}(|D|)|^2 L_{\Sigma}(1,f,\chi)L_{\Sigma}(2,f,\chi)}
\end{equation*}
where $D$ is a fundamental discriminant so that $\chi_{D}(-1) = -1$.  It was shown in \cite{jimbrown} that as long as $\gcd(p,D[\Sp_4(\inte):\Gamma_0^4(N)]) = 1$ one has $\ord_{\varpi}(\mathcal{B}_{k,N,D,\chi}) \leq 0$.  Thus, we would be able to get a divisibility condition on $L$-functions associated to $f$ guaranteeing the existence of the congruence if not for the factor $\frac{\langle f, f \rangle}{\langle f^{c}, f^{c} \rangle} = \frac{\langle f, f \rangle}{\langle f, f \rangle ^{c}}$ (\cite{shimura1976}, Lemma 4).  Unfortunately, we do not even know in general if this quantity is algebraic!  However, if we assume that $f$ has Fourier coefficients lying in $\real$, then this term is 1 and so does not cause any further issues. In that case, we have the following theorem.

\begin{theorem}\label{thm:noneigenformcongruence} Let $k > 3$ be an integer and $p$ a prime so that $p > 2k-2$.  Let $f \in S_{2k-2}(\SL_2(\inte),\ROI)$  be a newform with real Fourier coefficients and $\Ff$ the Saito-Kurokawa lift of $f$.  Let $f$ be ordinary at $p$ and $\overline{\rho}_{f}$ be irreducible.  If there exists $N>1$, a fundamental discriminant $D$ so that $(-1)^{k-1}D>0$, $\chi_{D}(-1)=-1$, $p \nmid ND[\Sp_4(\inte):\Gamma_0^4(N)]$, and a Dirichlet character $\chi$ of conductor $N$ so that 
\begin{equation*}
-M=\ord_{\varpi}(\mathcal{L}(k,f,D,\chi)) <0
\end{equation*}
then there exists $G \in \mathcal{S}_{k}^{\NM}(\Sp_4(\inte))$ so that 
\begin{equation*}
\Ff \equiv G (\modu \varpi^{M}).
\end{equation*}
\end{theorem}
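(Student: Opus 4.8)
The plan is to assemble the pieces developed above into the congruence; the one essential new ingredient beyond the bookkeeping of \cite{jimbrown} is the cuspidality of the pullback established in Section \ref{sec:eseries}, and I will indicate where it is decisive. First I would pass to level one: starting from the normalized pullback Eisenstein series $D_{E\Slash{\rho}}(\diag[Z,W],(5-k)/2)$, a form of weight $k$ and level $N$ in each of $Z$ and $W$ with $\varpi$-integral Fourier coefficients, I form its trace $\mathcal{E}(Z,W)$ over $(\Sp_4(\inte)\times\Sp_4(\inte))/(\Gamma_0^4(N)\times\Gamma_0^4(N))$. By the result of Section \ref{sec:eseries}, applied to the pullback $\Sp_4\times\Sp_4\hookrightarrow\Sp_8$ at the special value $(5-k)/2$, this $\mathcal{E}(Z,W)$ is a \emph{cusp} form of weight $k$ and level $1$ in each variable, and the $q$-expansion principle keeps its coefficients in $\intep[\chi]\subseteq\ROI$ (after enlarging $E$). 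Proposition \ref{prop:diagonalizeeseries} then diagonalizes it: in an $\ROI$-rational orthogonal eigenbasis $F_0=\Ff,F_1=F_{f_1},\dots,F_m=F_{f_m},F_{m+1},\dots,F_r$ of $\mathcal{S}_k(\Sp_4(\inte))$ one has
\begin{equation*}
\mathcal{E}(Z,W)=\sum_{0\le i\le r}c_{i,i}F_i(Z)F_i^{c}(W),\qquad c_{i,i}=\frac{\mathcal{A}_{k,N}L^{\Sigma}_{\st}(5-k,F_i,\chi)}{\pi^3\langle F_i^{c},F_i^{c}\rangle}.
\end{equation*}

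Next I would kill the elliptic contributions. Since $f$ is ordinary at $p$ and $\overline{\rho}_f$ is irreducible, the cited theorem from \cite{jimbrown} produces $t\in\T_{\ROI}$ with $tf=\alpha f$, $\alpha=u\langle f,f\rangle/(\Omega_f^{+}\Omega_f^{-})$ and $u\in\ROI^{\times}$, and $tf_i=0$ for $1\le i\le m$; by Hecke-equivariance of the Saito--Kurokawa correspondence there is $t^{S}\in\T_{\ROI}^{S}$ with $t^{S}F_0=\alpha F_0$ and $t^{S}F_i=0$ for $1\le i\le m$, and $t^{S}$ preserves the Hecke-stable subspace $\mathcal{S}_k^{\NM}(\Sp_4(\inte))$. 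Applying $t^{S}$ in the $Z$-variable yields
\begin{equation*}
t^{S}\mathcal{E}(Z,W)=\alpha c_{0,0}F_0(Z)F_0^{c}(W)+\sum_{m<j\le r}c_{j,j}\,t^{S}F_j(Z)\,F_j^{c}(W),
\end{equation*}
a form still with $\varpi$-integral Fourier coefficients. The arithmetic core is to estimate $\ord_{\varpi}(\alpha c_{0,0})$. Using $F_0^{c}=F_{f^{c}}$ (Corollary \ref{thm:skcomplexconjugation}), Theorem \ref{thm:innerproducts} for $\langle F_0^{c},F_0^{c}\rangle$, the factorization of $L^{\Sigma}_{\st}(5-k,\Ff,\chi)$ (Theorem \ref{thm:standardzetafactorization}), and Shimura's period theorem to render the occurring critical $L$-values $\varpi$-integral, one arrives as in \cite{jimbrown} at
\begin{equation*}
\alpha c_{0,0}=\mathcal{B}_{k,N,D,\chi}\,\mathcal{L}(k,f,D,\chi)\,\frac{\langle f,f\rangle}{\langle f^{c},f^{c}\rangle}.
\end{equation*}
Here both hypotheses on $f$ are spent: real Fourier coefficients force $\langle f^{c},f^{c}\rangle=\langle f,f\rangle^{c}=\langle f,f\rangle$, so the last ratio is $1$, and $p\nmid ND[\Sp_4(\inte):\Gamma_0^4(N)]$ forces $\ord_{\varpi}(\mathcal{B}_{k,N,D,\chi})\le 0$. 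Hence $\ord_{\varpi}(\alpha c_{0,0})\le\ord_{\varpi}(\mathcal{L}(k,f,D,\chi))=-M<0$; write $\ord_{\varpi}(\alpha c_{0,0})=-M'$ with $M'\ge M\ge 1$.

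Finally I would read off the congruence. By Corollary \ref{thm:skfourier} pick $T_0$ with $A_{F_0^{c}}(T_0)\in\ROI^{\times}$; equating the $T_0$-th Fourier coefficients of the displayed identity in the $W$-variable and solving for $F_0(Z)$ gives
\begin{equation*}
F_0(Z)=\frac{1}{\alpha c_{0,0}\,A_{F_0^{c}}(T_0)}\Bigl(A_{t^{S}\mathcal{E}}(Z,T_0)-\sum_{m<j\le r}c_{j,j}\,A_{F_j^{c}}(T_0)\,t^{S}F_j(Z)\Bigr).
\end{equation*}
Because $\left(\alpha c_{0,0}\,A_{F_0^{c}}(T_0)\right)^{-1}=\varpi^{M'}\cdot(\text{$\varpi$-unit})$ and $A_{t^{S}\mathcal{E}}(Z,T_0)$ is $\varpi$-integral, the contribution of the first term to $F_0(Z)$ is a $\varpi$-integral form all of whose Fourier coefficients have $\ord_{\varpi}\ge M'$. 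Taking $G$ to be the contribution of the sum over $j>m$, I get that $G=F_0-(F_0-G)$ is $\varpi$-integral, lies in $\mathcal{S}_k^{\NM}(\Sp_4(\inte))$ (being an $\ROI$-combination of the $t^{S}F_j$, $j>m$, which lie in that Hecke-stable space), and $\Ff=F_0\equiv G\,(\modu\varpi^{M'})$, hence a fortiori $\Ff\equiv G\,(\modu\varpi^{M})$. The congruence is nontrivial, since $\Ff\equiv 0\,(\modu\varpi^{M})$ would contradict the $\varpi$-unit Fourier coefficient of $\Ff$ supplied by Corollary \ref{thm:skfourier}.

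I expect the only genuine obstacle to be precisely the input from Section \ref{sec:eseries}: without the cuspidality of the pullback in each variable, $\mathcal{E}(Z,W)$ is only a modular form, one is forced into the ad hoc arguments of \cite{jimbrown} that ``lose powers of $p$'', and the clean inequality $\ord_{\varpi}(\alpha c_{0,0})\le\ord_{\varpi}(\mathcal{L}(k,f,D,\chi))$ — and therefore the sharp exponent $M$ in the conclusion — becomes unattainable. Once that is granted, the remaining work is purely arithmetic: isolating the possibly transcendental ratio $\langle f,f\rangle/\langle f^{c},f^{c}\rangle$ and bounding $\ord_{\varpi}(\mathcal{B}_{k,N,D,\chi})$ from above, both dispatched by the real-coefficient and coprimality assumptions.
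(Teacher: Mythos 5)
Your proposal is correct and follows essentially the same route as the paper: trace to level one using the cuspidality of the pullback, diagonalize via Proposition \ref{prop:diagonalizeeseries}, kill the other Saito--Kurokawa lifts with the Hecke operator $t^{S}$, identify $\alpha c_{0,0}$ with $\mathcal{B}_{k,N,D,\chi}\,\mathcal{L}(k,f,D,\chi)$ using the real-coefficient hypothesis, and extract the congruence by equating $T_0$-th Fourier coefficients at a unit coefficient of $F_0^{c}$. The only differences are cosmetic (you solve for $F_0$ exactly and then reduce, where the paper reduces modulo $\varpi^{-M}$ directly), so nothing further is needed.
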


We repeat Lemma 6.4 of \cite{jimbrown} here for completeness, though it will not be needed.

\begin{lemma} With the set-up as in Theorem \ref{thm:noneigenformcongruence}, there exists an eigenform $F \in \mathcal{S}_{k}^{\NM}(\Sp_4(\inte))$ so that $\Ff \equiv_{\ev} F(\modu \varpi)$. 
\end{lemma}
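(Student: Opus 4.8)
The plan is to run a Deligne--Serre style localization argument: the Fourier-coefficient congruence $\Ff\equiv G\pmod{\varpi^{M}}$ produced in Theorem \ref{thm:noneigenformcongruence} will be upgraded to a congruence of Hecke eigensystems by projecting to the local factor of the Hecke algebra cut out by $\Ff$.

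First I would fix the integral structures. Let $M=\mathcal{S}_{k}(\Sp_4(\inte),\ROI)$ be the lattice of Siegel cusp forms with $\varpi$-integral Fourier coefficients; by the $q$-expansion principle this is an $\ROI$-lattice spanning $\mathcal{S}_{k}(\Sp_4(\inte))$, and (as $p>2k-2$, consistently with the standing use of $\T_{\ROI}^{S}$ in this paper) the Siegel Hecke operators preserve $M$, so $\mathbb{T}:=\T_{\ROI}^{S}$ is a finite flat, hence semilocal, $\ROI$-algebra; decompose $\mathbb{T}=\prod_{\mathfrak{n}}\mathbb{T}_{\mathfrak{n}}$ over its finitely many maximal ideals, with the projections $e_{\mathfrak{n}}$ genuine idempotents of $\mathbb{T}$. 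The Hecke eigensystem of $\Ff$ is a homomorphism $\lambda_{\Ff}\colon\mathbb{T}\to\ROI$ (its values lie in $\ROI$ because, by Theorem \ref{thm:skcorrespondence}, the eigenvalues of $\Ff$ are polynomial expressions in the Fourier coefficients of $f$ and powers of $p$). Put $\mathfrak{m}=\ker(\mathbb{T}\xrightarrow{\lambda_{\Ff}}\ROI\twoheadrightarrow\ROI/\varpi)$, a maximal ideal, with idempotent $e_{\mathfrak{m}}\in\mathbb{T}$, so that $e_{\mathfrak{m}}\Ff=\Ff$.

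Next I would apply $e_{\mathfrak{m}}$ to the congruence, after enlarging $\ROI$ so that it contains the eigenvalues of all the relevant eigenforms. Since $e_{\mathfrak{m}}$ preserves $M$ and $\Ff-G$ has all Fourier coefficients in $\varpi^{M}\ROI$ (so $\Ff-G\in\varpi^{M}M$, again by the $q$-expansion principle), we get $\Ff-e_{\mathfrak{m}}G=e_{\mathfrak{m}}(\Ff-G)\in\varpi^{M}M$, i.e. $e_{\mathfrak{m}}G\equiv\Ff\pmod{\varpi^{M}}$. Now suppose, for contradiction, that no eigenform in $\mathcal{S}_{k}^{\NM}(\Sp_4(\inte))$ has eigensystem reducing to $\lambda_{\Ff}\bmod\varpi$. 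The form $G$ is a linear combination of the eigenforms $t^{S}F_{j}$ with $m<j\leq r$, all of which lie in $\mathcal{S}_{k}^{\NM}(\Sp_4(\inte))$; since for any eigenform $F'$ with eigensystem $\lambda'$ one has $\lambda'(e_{\mathfrak{m}})\in\{0,1\}$, with value $1$ exactly when $\ker(\lambda'\bmod\varpi)=\mathfrak{m}$, our assumption forces $e_{\mathfrak{m}}F_{j}=0$ for all such $j$ and hence $e_{\mathfrak{m}}G=0$ (as $e_{\mathfrak{m}}$ commutes with $t^{S}$). Then $\Ff\equiv 0\pmod{\varpi^{M}}$ with $M\geq 1$, contradicting Corollary \ref{thm:skfourier}, which furnishes a Fourier coefficient of $\Ff$ lying in $\ROI^{\times}$. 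Therefore there is an eigenform $F\in\mathcal{S}_{k}^{\NM}(\Sp_4(\inte))$ with $\ker(\lambda_{F}\bmod\varpi)=\mathfrak{m}=\ker(\lambda_{\Ff}\bmod\varpi)$; since $\mathbb{T}/\mathfrak{m}$ is a field, $\lambda_{F}$ and $\lambda_{\Ff}$ induce the same map $\mathbb{T}\to\mathbb{T}/\mathfrak{m}$, i.e. $\lambda_{F}(n)\equiv\lambda_{\Ff}(n)\pmod{\varpi}$ for all $n$, which is precisely $\Ff\equiv_{\ev}F\pmod{\varpi}$.

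The argument is largely bookkeeping; the one genuine input is Corollary \ref{thm:skfourier} (the unit Fourier coefficient of $\Ff$), which excludes the degenerate possibility $e_{\mathfrak{m}}G=0$, and the step needing the most care is the integrality of the Siegel Hecke action on $M$ together with the ensuing decomposition $\mathbb{T}=\prod_{\mathfrak{n}}\mathbb{T}_{\mathfrak{n}}$ into local factors with idempotents inside $\mathbb{T}$ — this is exactly what converts the Fourier-coefficient congruence into an eigensystem congruence.
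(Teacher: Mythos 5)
Your proof is correct and follows essentially the same route as the paper: decompose $\T_{\ROI}^{S}$ into its local factors, use the idempotent (the paper's Hecke operator $t$) attached to the maximal ideal $\mathfrak{m}_{\Ff}$ to project the congruence $\Ff\equiv G\pmod{\varpi^{M}}$, and rule out the degenerate case via the unit Fourier coefficient of $\Ff$ from Corollary \ref{thm:skfourier}. Your write-up is somewhat more careful about the integral structure and about why $\lambda'(e_{\mathfrak{m}})\in\{0,1\}$, but the argument is the same.
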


\begin{proof}  Let $G$ be as in Theorem \ref{thm:noneigenformcongruence}.  The Hecke algebra $\T^{S}_{\ROI}$ decomposes as 
\begin{equation*}
\T_{\ROI}^{S} \cong \prod \T^{S}_{\ROI,\mathfrak{m}}
\end{equation*}
where the product is over the maximal ideals of $\T_{\ROI}^{S}$ containing $\varpi$.  Write $\mathfrak{m}_{\Ff}$ for the maximal ideal corresponding to $\Ff$, i.e., $\mathfrak{m}_{\Ff}$ is the kernel of the $\ROI$-algebra homomorphism $\lambda_{\Ff}: \T_{\ROI}^{S} \rightarrow \ROI$ sending $t$ to the eigenvalue of $t$ acting on $\Ff$.  The decomposition gives that there exists a Hecke operator $t \in \T_{\ROI}^{S}$ so that $t \Ff = \Ff$ and $t F = 0$ if $F$ does not correspond to the ideal $
\mathfrak{m}_{\Ff}$, i.e., if $F \not \equiv_{\ev} \Ff (\modu \varpi)$.  

Write $G = \sum c_{i} F_{i}$.  By construction, the only $F_{i}$ that appear are in $\mathcal{S}_{k}^{\NM}(\Sp_4(\inte))$.  Applying the Hecke operator $t$ to $G$ we see that $\Ff \equiv t G (\modu \varpi)$ and so $tG \not \equiv 0 (\modu \varpi)$.  Thus, there is an eigenform $F \in \mathcal{S}_{k}^{\NM}(\Sp_4(\inte))$ so that $\Ff \equiv_{\ev} F (\modu \varpi)$ as claimed.
\end{proof}

We close this section by phrasing our results in terms of the CAP ideal (\cite{klosin}).  This ideal can be thought of as a generalization of the Eisenstein ideal to our situation.  As the Eisenstein ideal measures congruences be to Eisenstein series, the CAP ideal measures congruences between $\Ff$ and non-CAP modular forms.

Let $\T_{\ROI}^{\NM}$ denote the image of $\T_{\ROI}^{S}$ inside of $\End_{\cmplx}(\mathcal{S}_{k}^{\NM}(\Sp_4(\inte)))$.  Let $\phi: \T_{\ROI}^{S} \rightarrow \T_{\ROI}^{\NM}$ denote the canonical $\ROI$-algebra surjection.  Denote the annihilator of $\Ff$ in $\T_{\ROI}^{S}$ by $\Ann(\Ff)$.  The annihilator of $\Ff$ in $\T_{\ROI}^{S}$ is a prime ideal and one has that the map $\lambda_{\Ff}$ induces an $\ROI$-algebra isomorphism 
\begin{equation*}
\T_{\ROI}^{S}/\Ann(\Ff) \cong \ROI.
\end{equation*}
As the map $\phi$ is surjective, one has that $\phi(\Ann(\Ff))$ is an ideal in $\T_{\ROI}^{\NM}$.  We call this ideal the {\it CAP ideal associated to $\Ff$}.  

One has that there exists an $r \in \inte_{\geq 0}$ so that the following diagram commutes:

\begin{figure}[h!]
\centerline{\xymatrix{ \T_{\ROI}^{S} \ar^{\phi}[r] \ar[d] & \T_{\ROI}^{\NM} \ar[d] \\ \T_{\ROI}^{S}/\Ann(\Ff) \ar^{\phi}[r] \ar^{\lambda_{\Ff}}[d] & \T_{\ROI}^{\NM}/\phi(\Ann(\Ff)) \ar^{\simeq}[d] \\ \ROI \ar[r] & \ROI/\varpi^{r} \ROI.}}
\end{figure}

\noindent Note that all of the maps in the above diagram are $\ROI$-algebra surjections.

\begin{corollary}\label{corl:capideal} With $r$ as in the diagram above and $M$ as in Theorem \ref{thm:noneigenformcongruence} we have $r \geq M$.
\end{corollary}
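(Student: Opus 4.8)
The plan is to translate the congruence furnished by Theorem~\ref{thm:noneigenformcongruence} into a statement about the ideal $\phi(\Ann(\Ff))$ and then read off $r$ from the commutative diagram. First I would record that $\ker\phi$ is exactly the annihilator of $\mathcal{S}_{k}^{\NM}(\Sp_4(\inte))$ in $\T_{\ROI}^{S}$, since by definition $\T_{\ROI}^{\NM}$ is the image of $\T_{\ROI}^{S}$ in $\End_{\cmplx}(\mathcal{S}_{k}^{\NM}(\Sp_4(\inte)))$. Chasing the bottom square of the diagram, every element of $\varpi^{r}\ROI$ is the image under $\lambda_{\Ff}$ of some $t \in \T_{\ROI}^{S}$ with $\phi(t) \in \phi(\Ann(\Ff))$; writing $\phi(t) = \phi(s)$ with $s \in \Ann(\Ff)$ gives $t - s \in \ker\phi$ and $\lambda_{\Ff}(t-s) = \lambda_{\Ff}(t)$, so that $\varpi^{r}\ROI \subseteq \lambda_{\Ff}(\ker\phi)$. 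Hence it suffices to prove
\begin{equation*}
\ord_{\varpi}(\lambda_{\Ff}(t)) \geq M \quad \text{for every } t \in \ker\phi,
\end{equation*}
for then $\varpi^{r}\ROI \subseteq \varpi^{M}\ROI$, which in the discrete valuation ring $\ROI$ is precisely the assertion $r \geq M$.

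To establish the displayed bound, let $G \in \mathcal{S}_{k}^{\NM}(\Sp_4(\inte))$ be the form produced by Theorem~\ref{thm:noneigenformcongruence}, so that $\Ff \equiv G\ (\modu \varpi^{M})$, and fix $t \in \ker\phi$. Since the Siegel Hecke operators $T^{S}(n)$ act on Fourier coefficients through $\inte$-linear formulas, every element of $\T_{\ROI}^{S}$ preserves congruences of Fourier coefficients modulo $\varpi^{M}$; applying $t$ to the congruence and using $tG = 0$ (as $t$ annihilates $\mathcal{S}_{k}^{\NM}(\Sp_4(\inte))$ and $G$ lies in that space) yields $t\Ff \equiv 0\ (\modu \varpi^{M})$. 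As $\Ff$ is a Hecke eigenform with $t$ acting by the scalar $\lambda_{\Ff}(t)$, this says $\ord_{\varpi}(\lambda_{\Ff}(t) A_{\Ff}(T)) \geq M$ for all $T$. By Corollary~\ref{thm:skfourier} there is a $T_{0}$ with $A_{\Ff}(T_{0}) \in \ROI^{\times}$; specializing to $T = T_{0}$ gives $\ord_{\varpi}(\lambda_{\Ff}(t)) \geq M$, completing the argument. (That $r$ is finite, so that the inequality is sharp rather than vacuous, already follows from $\mathcal{S}_{k}^{\NM}(\Sp_4(\inte))$ being finite-dimensional, whence $\T_{\ROI}^{\NM}$ is finite over $\ROI$ and the cyclic quotient $\T_{\ROI}^{\NM}/\phi(\Ann(\Ff))$ has finite length.)

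I do not expect a serious obstacle: this corollary is essentially a formal repackaging of the congruence already in hand. The only point requiring care is the diagram chase producing $\varpi^{r}\ROI \subseteq \lambda_{\Ff}(\ker\phi)$, and the two arithmetic inputs used — integrality of the Hecke action on Fourier coefficients and the existence of a unit Fourier coefficient of $\Ff$ — are exactly those already invoked in Section~\ref{sec:congruences}, so nothing new is needed.
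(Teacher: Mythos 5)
Your proof is correct and is essentially the paper's argument run in mirror image: where you pick $t \in \ker\phi$ with $\lambda_{\Ff}(t) = \varpi^{r}$ (so $tG = 0$ and $t\Ff = \varpi^{r}\Ff$), the paper picks $t \in \phi^{-1}(\varpi^{r})$ (so $tG = \varpi^{r}G$ and $t \in \Ann(\Ff)$) and reaches the same contradiction with the unit Fourier coefficient of $\Ff$ from Corollary \ref{thm:skfourier}. Your direct version is, if anything, slightly cleaner, since the diagram chase only gives $\lambda_{\Ff}(t) \in \varpi^{r}\ROI$ for the paper's choice of $t$, so the paper's assertion that such a $t$ lies in $\Ann(\Ff)$ needs the small adjustment you build in from the start; the only soft spot in your write-up is the parenthetical on finiteness of $r$ (finiteness of $\T_{\ROI}^{\NM}$ over $\ROI$ alone does not force a cyclic quotient to have finite length), but that remark is not used in the argument.
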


\begin{proof}  Let $G$ be as in Theorem \ref{thm:noneigenformcongruence}.  Choose a $t \in \phi^{-1}(\varpi^{r}) \subset \T_{\ROI}^{S}$.  Note that this means that $t G = \varpi^{r} G$.  We also have by the commutativity of the diagram that $t \in \Ann(\Ff)$ and so the congruence in Theorem \ref{thm:noneigenformcongruence} gives $\varpi^{r} G \equiv 0 (\modu \varpi^{M})$, i.e., 
\begin{equation*}
G \equiv 0 (\modu \varpi^{M-r}).
\end{equation*}
Now assume that $r < M$. The fact that $\Ff \equiv G (\modu \varpi^{M})$ and $G \equiv 0 (\modu \varpi)$ imply that 
\begin{equation*}
\Ff \equiv 0 (\modu \varpi).
\end{equation*}
However, this is impossible and so we must have $r \geq M$ as claimed. 
\end{proof}


\section{Residually reducible representations and lattices}\label{sec:lattices}

In this section we construct the lattice we will use in section \ref{sec:finalresult} to give a lower bound on the appropriate Selmer group.  Our main result here is a generalization of Theorem 1.1 of \cite{urbanduke}.  Where his result deals with the case of $\ov{\rho}^{\semi}$ splitting into two irreducible representations, our results are for when $\ov{\rho}^{\semi}$ splits into three irreducible representations.  The proofs given here are adapted from those in \cite{urbanduke} to our situation.  As Theorem 1.1 of \cite{urbanduke} is a variant of the main result of \cite{urban99}, our result here is very much a variant of the main result of \cite{jb08}.  The results contained in this section or some variant of them will also appear in \cite{skinnerurbangl2}.

Let $\ROI$ be a discrete valuation ring and $\mc{R}$ a reduced local commutative algebra that is finite over $\ROI$ and is Henselian.  Let $\mf{m}_{\mc{R}}$ be the maximal ideal of $\mc{R}$ and $\kappa_{\mc{R}}$ the residue field.  We use the fact that $\mc{R}$ is reduced to embed it into a product of its irreducible components
\begin{equation*}
\mc{R} \subset \widetilde{\mc{R}} = \prod_{\wp} \ROI_{\wp} \subset \prod_{\wp} F_{\wp} = F_{\mc{R}}
\end{equation*}
where $F_{\wp}$ is the field of fractions of $\ROI_{\wp}$.  Let $\mf{m}_{\wp}$ be the maximal ideal of $\ROI_{\wp}$ and $\kappa_{\wp}$ the residue field.  

Let $M_{F_{\mc{R}}}$ be a finite free $F_{\mc{R}}$-module and let $\lat$ be a $\mc{R}$-submodule of $M_{F_{\mc{R}}}$.  We say $\lat$ is a $\mc{R}$-lattice if $\lat$ is finite over $\mc{R}$ and $\lat \otimes_{\mc{R}} F_{\mc{R}} = M_{F_{\mc{R}}}$.

\begin{theorem}\label{thm:lattices}  Let $\mc{A}$ be a $\mc{R}$-algebra and let $\rho$ be an absolutely irreducible representation of $\mc{A}$ on $F_{\mc{R}}^{n}$. Suppose there are at least $n$ distinct elements in $\kappa_{\mc{R}}^{\times}$ and suppose that there exist three representations $\rho_{i}$ for $1 \leq i \leq 3$ in $\M_{n_{i}}(\mc{R})$ and $I \subset \mc{R}$ a proper ideal of $\mc{R}$ such that
\begin{enumerate}
\item the coefficients of the characteristic polynomial of $\rho$ belong to $\mc{R}$;
\item the characteristic polynomials of $\rho$ and $\rho_1 \oplus \rho_2 \oplus \rho_3$ are congruent modulo $I$;
\item the residual representations $\ov{\rho}_{i}$ are absolutely irreducible for $1 \leq i \leq 3$;
\item $\ov{\rho}_{i} \not \equiv \ov{\rho}_{j}$ if $i \neq j$.
\end{enumerate}
The there exists an $\mc{A}$-stable $\mc{R}$-lattice $\lat$ in $F_{\mc{R}}^{n}$ and $\mc{R}$-lattices $\mc{T}_1, \mc{T}_2$ of $F_{\mc{R}}$ such that we have the following exact sequence of $\mc{A}$-modules:\\

\begin{figure}[h!]
\centerline{\xymatrix{ 0 \ar[r] & \mc{N}_1 \oplus \mc{N}_2 \ar[r] & \lat \otimes \mc{R}/I \ar[r]  &\rho_3 \otimes \mc{R}/I \ar@/_1pc/[l]^{s} \ar[r] & 0,
}}
\end{figure}

\noindent where $\mc{N}_{i} = \rho_{i} \otimes \mc{T}_{i}/I\mc{T}_{i}$ for $i=1,2$ and $s$ is a section (though only of $\mc{R}/I$-modules.)  Moreover, $\lat$ has no quotient isomorphic to a representation $\ov{\rho}'$ with $\ov{\rho}'^{\semi} = \ov{\rho}_1 \oplus \ov{\rho}_2$. 
\end{theorem}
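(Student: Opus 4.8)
The plan is to follow and extend the lattice--construction method of \cite{urbanduke}, which establishes the analogous statement when $\ov{\rho}^{\semi}$ splits into two irreducible constituents. Put $V=F_{\mc{R}}^{n}$ with the $\mc{A}$-action given by $\rho$, and let $\mc{B}$ be the $\mc{R}$-subalgebra of $\End_{F_{\mc{R}}}(V)=\M_{n}(F_{\mc{R}})$ generated by $\rho(\mc{A})$. By hypothesis (1) the trace of every element of $\rho(\mc{A})$, and of every product of such, lies in $\mc{R}$, so the usual trace--pairing argument shows $\mc{B}$ is an $\mc{R}$-order in $\M_{n}(F_{\mc{R}})$, while absolute irreducibility of $\rho$ gives $\mc{B}\otimes_{\mc{R}}F_{\mc{R}}=\M_{n}(F_{\mc{R}})$. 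An $\mc{A}$-stable $\mc{R}$-lattice in $V$ is then the same as a nonzero finitely generated $\mc{B}$-submodule of $V$, and by (2) the reduction modulo $\mf{m}_{\mc{R}}$ of any such lattice has semisimplification $\ov{\rho}_{1}\oplus\ov{\rho}_{2}\oplus\ov{\rho}_{3}$, which by (3)--(4) is multiplicity free with pairwise distinct absolutely irreducible constituents. The two technical hypotheses enter exactly as in \cite{urbanduke}: the existence of at least $n$ elements in $\kappa_{\mc{R}}^{\times}$ produces an $a\in\mc{A}$ whose image has $n$ distinct eigenvalues in $\kappa_{\mc{R}}$, so that the residual block idempotents are defined; since $\mc{R}$, hence $\mc{R}/I$, is Henselian local with residue field $\kappa_{\mc{R}}$, these idempotents lift to $\mc{B}\otimes_{\mc{R}}\mc{R}/I$, giving its generalized--matrix--algebra block description, and the reducedness of $\mc{R}$ makes the normalization $\widetilde{\mc{R}}=\prod_{\wp}\ROI_{\wp}$ available so that the lattice manipulations below can be carried out one component at a time.

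The first substantive step is to exhibit a lattice in which the $\rho_{3}$-constituent sits on top. From the multiplicity--free reduction, the standard argument (pass to the $\mc{B}$-submodule generated by vectors whose reductions lie in the $\ov{\rho}_{1}$- and $\ov{\rho}_{2}$-isotypic subspaces, then saturate) yields an $\mc{A}$-stable $\mc{R}$-lattice $L$ fitting in a $\mc{B}$-stable exact sequence $0\to L'\to L\to L''\to 0$ with $\ov{L''}^{\semi}=\ov{\rho}_{3}$ and $\ov{L'}^{\semi}=\ov{\rho}_{1}\oplus\ov{\rho}_{2}$. I would then take $\lat$ to be extremal in a suitable sense --- maximal, up to homothety, among lattices of this shape contained in a fixed reference lattice --- subject also to each of $\ov{\rho}_{1}$ and $\ov{\rho}_{2}$ being nontrivially glued to $\ov{\rho}_{3}$ in $\lat/I\lat$. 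Extremality forces the remaining assertions. If the image of $L'$ in $\lat/I\lat$ failed to split as a direct sum of a $\rho_{1}$-isotypic and a $\rho_{2}$-isotypic submodule, i.e.\ if $\ov{\rho}_{1}$ and $\ov{\rho}_{2}$ were glued to each other, or if $\lat$ had a quotient $\ov{\rho}'$ with $\ov{\rho}'^{\semi}=\ov{\rho}_{1}\oplus\ov{\rho}_{2}$ --- equivalently, if $\ov{\rho}_{3}$ appeared as a submodule of $\lat/\mf{m}_{\mc{R}}\lat$ --- then $\lat$ could be enlarged inside $V$ without destroying its shape, contradicting maximality; here the pairwise distinctness of the $\ov{\rho}_{i}$ is what excludes the unwanted gluings. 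The rank--one $\mc{R}$-lattices $\mc{T}_{1},\mc{T}_{2}\subseteq F_{\mc{R}}$ emerge as multiplicity modules: over each component $\wp$ the $\rho_{i}$-isotypic subquotient of $\lat/I\lat$ is $\rho_{i}\otimes_{\mc{R}}\ROI_{\wp}$ twisted by a fractional $\ROI_{\wp}$-ideal recording how that copy of $\rho_{i}$ sits inside $\lat$, and assembling these over all $\wp$ gives $\mc{N}_{i}=\rho_{i}\otimes_{\mc{R}}\mc{T}_{i}/I\mc{T}_{i}$. Finally the section $s$ is automatic: $\rho_{3}\otimes\mc{R}/I\cong(\mc{R}/I)^{n_{3}}$ is free over $\mc{R}/I$, so the surjection $\lat/I\lat\twoheadrightarrow\rho_{3}\otimes\mc{R}/I$ splits as $\mc{R}/I$-modules.

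The main obstacle is precisely this construction and the extremality argument. Already for two constituents the combinatorics of which subquotients can occur in reductions of $\mc{B}$-stable lattices is delicate, and with three one must simultaneously ensure that $\ov{\rho}_{3}$ genuinely sits on top --- neither as a direct summand nor as a submodule of $\lat/\mf{m}_{\mc{R}}\lat$ --- that $\ov{\rho}_{1}$ and $\ov{\rho}_{2}$ are glued only to $\ov{\rho}_{3}$ and not to each other, and that all of this holds assuming merely that $I$ is a proper ideal and that $\mc{R}$ is reduced, Henselian, and finite over $\ROI$. The device that makes this tractable is to do the lattice manipulations over $\widetilde{\mc{R}}=\prod_{\wp}\ROI_{\wp}$, where each $\ROI_{\wp}$ is a one--dimensional local domain over which the two--constituent analysis of \cite{urbanduke} (itself a variant of \cite{urban99}, just as our three--constituent situation is a variant of \cite{jb08}) can be run and iterated, and then to descend the resulting lattice to $\mc{R}$; verifying that this descent is compatible with the block description of $\mc{B}\otimes_{\mc{R}}\mc{R}/I$ --- and hence that $\lat$ has no quotient with semisimplification $\ov{\rho}_{1}\oplus\ov{\rho}_{2}$ --- is the concluding bookkeeping.
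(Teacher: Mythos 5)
There is a genuine gap, and it sits exactly where the real content of the theorem lies: the control modulo $I$ rather than modulo $\mf{m}_{\mc{R}}$. Your construction and your extremality argument operate entirely at the residual level --- which constituents are glued to which in $\lat/\mf{m}_{\mc{R}}\lat$ --- but the exact sequence to be produced concerns $\lat\otimes\mc{R}/I$ for an \emph{arbitrary} proper ideal $I$. What has to be shown is that, writing the action of $a\in\mc{A}$ on $\lat=\lat_1\oplus\lat_2\oplus\lat_3$ in block form $A^{i,j}(a)$, the blocks $A^{3,1}(a)$ and $A^{3,2}(a)$ (and one of $A^{1,2}$, $A^{2,1}$) take values in $I$ times the target, so that $\mc{N}_1\oplus\mc{N}_2$ is genuinely an $\mc{A}$-submodule of $\lat\otimes\mc{R}/I$ with quotient $\rho_3\otimes\mc{R}/I$. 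This is where hypothesis (2) is actually consumed: it yields the trace congruences $\Tr(A^{i,i}(a))\equiv\Tr(\rho_i(a))\ (\modu I)$ and $\Tr(A^{j,i}(a)A^{i,j}(b))\in\mc{R}$, which feed an induction (using the surjectivity of $a\mapsto(A^{1,3}(a),A^{2,3}(a))$ onto $\Hom_{\mc{R}}(\lat_3,\lat_1\oplus\lat_2)$) proving $A^{3,i}(a)\in\Hom_{\mc{R}}(\lat_i,(\mf{m}_{\mc{R}}^{j}+I)\lat_3)$ for every $j$, hence in $\Hom_{\mc{R}}(\lat_i,I\lat_3)$. No maximality statement about residual gluings can substitute for this, because $I$ need not be contained in any fixed power of $\mf{m}_{\mc{R}}$ and the depth of the congruence mod $I$ is precisely the quantity the theorem is designed to capture. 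For the same reason, the identification $\lat_i\otimes\mc{R}/I\cong\rho_i\otimes\mc{T}_i/I\mc{T}_i$ as $\mc{A}$-modules is not bookkeeping of multiplicity modules: it is a Carayol-type rigidity statement over the (noncommutative Artinian) algebra $\End_{\mc{R}/I}(\mc{T}_i/I\mc{T}_i)$, again driven by the trace congruence mod $I$.

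Two further points. First, the existence of your extremal lattice is asserted but not proved, and it is delicate: $\mc{R}$ is only a reduced Henselian local finite $\ROI$-algebra, the set of $\mc{A}$-stable lattices up to homothety need not have maximal elements subject to your gluing constraints, and building lattices ``one component at a time'' over $\widetilde{\mc{R}}=\prod_{\wp}\ROI_{\wp}$ produces a $\widetilde{\mc{R}}$-lattice with no canonical descent to an $\mc{R}$-lattice. The construction that works is more elementary and makes the final assertion free of charge: after diagonalizing $\rho_{\widetilde{\mc{R}}}(a_0)$ and producing the block idempotents $E_1,E_2,E_3$ in the image of $\mc{A}$, one takes $\lat$ to be the $\mc{R}$-span of $\rho_{\widetilde{\mc{R}}}(\mc{A})e_n$ for the single basis vector $e_n$ lying in the $\rho_3$-block. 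Then $\lat$ is $\mc{A}$-stable and generated over $\mc{A}$ by $\lat_3=E_3\lat$, so any quotient of $\lat$ with semisimplification $\ov{\rho}_1\oplus\ov{\rho}_2$ would have to kill $\lat_3$ and hence all of $\lat$ --- no extremality needed. Second, your parenthetical ``equivalently, if $\ov{\rho}_3$ appeared as a submodule of $\lat/\mf{m}_{\mc{R}}\lat$'' is not an equivalence and should be dropped.
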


Let $\rho_{\wp}: \mc{A} \rightarrow \M_{n}(F_{\wp})$ be the representation arising from $\rho$ via the projection $F_{\mc{R}} \rightarrow F_{\wp}$. Since the trace of $\rho_{\wp}$ takes values in $\ROI_{\wp}$, we can find an $\mc{A}$-stable $\ROI_{\wp}$-lattice $\lat_{\wp}$ in $F_{\wp}^{n}$.  Write $\ov{\rho}_{\wp}$ for the residual representation $\ov{\rho}_{\wp}: \mc{A} \rightarrow \M_{n}(\kappa_{\wp})$.  We abuse notation and write $\ov{\rho}_{i}$ for the residual representation $\ov{\rho}_{i} : \mc{A} \rightarrow \M_{n_{i}}(\kappa_{\mc{R}})$ as well as the residual representation arising from the natural projection from $\kappa_{\mc{R}} \rightarrow \kappa_{\wp}$. The fact that $\ov{\rho}_{\wp}^{\semi} = \ov{\rho}_{1} \oplus \ov{\rho}_{2} \oplus \ov{\rho}_{3}$ allows us to conclude that there is an $\ROI_{\wp}$-basis of $\lat_{\wp}$ so that 
\begin{equation}
\rho_{\wp}: \mc{A} \rightarrow \M_{n}(\ROI_{\wp})
\end{equation}
with 
\begin{equation}\label{eqn:urbanstar}
\ov{\rho}_{\wp}(a) = \begin{pmatrix} \ov{\rho}_{1}(a) & \star_1 & \star_2 \\ \star_3 & \ov{\rho}_{2}(a) & \star_4 \\ 0 & 0 & \ov{\rho}_{3}(a) \end{pmatrix}.
\end{equation}
Moreover, we have $\star_1 \star_3 = 0$.  Set $\rho_{\widetilde{\mc{R}}} = (\rho_{\wp})_{\wp} : \mc{A} \rightarrow \M_{n}(\widetilde{R})$.  

The fact that each $\ov{\rho}_{i}$ is absolutely irreducible implies that $\text{im} \ov{\rho}_{i} = \M_{n_{i}}(\kappa_{\mc{R}})$.  Combining this with $\ov{\rho}_{i} \not \equiv \ov{\rho}_{j}$ if $i \neq j$ allows us to conclude that there exists $a_0 \in \mc{A}$ so that $\det(X-\ov{\rho}_{\widetilde{\mc{R}}}(a_0))$ has $n$ distinct roots $\ov{\alpha}_1, \dots, \ov{\alpha}_{n}$ where $\ov{\rho}_{\widetilde{\mc{R}}}(a) \in \M_{n}\left(\frac{\widetilde{R}}{\Rad(\widetilde{\mc{R}})}\right)$.  Hensel's lemma guarantees that there exists $n$ distinct elements $\alpha_1, \dots, \alpha_{n} \in \mc{R}$ that are roots of $\det(X - \rho_{\widetilde{\mc{R}}}(a_0))$ so that $\alpha_{i} \equiv \ov{\alpha}_{i} (\modu \mf{m}_{\mc{R}})$.  After a change of basis we may assume that 
\begin{equation*}
\rho_{\widetilde{\mc{R}}}(a_0) = \diag(\alpha_1, \dots, \alpha_{n}).
\end{equation*}
Lemma 2.1 of \cite{jb08} gives that the $\mc{R}$-submodule of $\M_{n}(\widetilde{\mc{R}})$ generated by the powers of $\rho_{\widetilde{\mc{R}}}(a_0)$ is exactly the set of diagonal matrices with entries in $\mc{R}$.  So for all $i$ with $1 \leq i \leq n$ there exists $f_{i} \in \mc{A}$ so that 
\begin{equation*}
\rho_{\widetilde{\mc{R}}}(f_{i}) = E(\alpha_{i}) = \diag(0, \dots, 0, 1, 0, \dots 0)
\end{equation*}
with $1$ in the $i$th place. We also set $a_1, a_2,$ and $a_3$ to be the elements in $\mc{A}$ so that
\begin{align*}
\rho_{\widetilde{\mc{R}}}(a_{1}) &= E_1 = \begin{pmatrix} 1_{n_1} & 0 & 0 \\ 0 & 0_{n_2} & 0 \\ 0 & 0 & 0_{n_3} \end{pmatrix},\\
\rho_{\widetilde{\mc{R}}}(a_2) &= E_2 = \begin{pmatrix} 0_{n_1} & 0 & 0 \\ 0 & 1_{n_2} & 0 \\ 0 & 0 & 0_{n_3} \end{pmatrix},\\
\end{align*}
and 
\begin{equation*}
\rho_{\widetilde{\mc{R}}}(a_3) = E_3 = \begin{pmatrix} 0_{n_1} & 0 & 0 \\ 0 & 0_{n_2} & 0 \\ 0 & 0 & 1_{n_3} \end{pmatrix}.
\end{equation*}

We now consider $\widetilde{\mc{R}}^{n}$ with an action of $\mc{A}$ given by $\rho_{\widetilde{\mc{R}}}$.  Let $(e_1, \dots, e_{n})$ be the canonical basis and set $\lat$ as the $\mc{R}$-sublattice of $\widetilde{\mc{R}}^{n}$ generated by $\rho_{\widetilde{\mc{R}}}(a) e_{n}$ as $a$ runs through $\mc{A}$.  It is clear that $\lat$ is $\mc{A}$-stable by construction.  Set $\lat_{i} = E_{i}(\lat)$ for $i = 1, 2, 3$. Trivially we have $\lat = \lat_1 \oplus \lat_2 \oplus \lat_3$.  

\begin{lemma} The lattice $\lat_{i}$ is free of rank $n_{i}$ over $\mc{R}$ for $1 \leq i \leq 3$.
\end{lemma}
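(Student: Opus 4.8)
The plan is to prove freeness by a Nakayama count, the crux being that $\lat_{i}$ can be generated by exactly $n_{i}$ elements over $\mc{R}$.

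I would first record the easy implications. Since $\lat=\lat_{1}\oplus\lat_{2}\oplus\lat_{3}$ is finite over the local ring $\mc{R}$, each $\lat_{i}$ is finite over $\mc{R}$; and because $\lat$ is an $\mc{R}$-lattice in $F_{\mc{R}}^{n}$ we have $\lat_{i}\otimes_{\mc{R}}F_{\mc{R}}=E_{i}F_{\mc{R}}^{n}\cong F_{\mc{R}}^{n_{i}}$. By Nakayama, $\lat_{i}$ is generated by $d_{i}:=\dim_{\kappa_{\mc{R}}}(\lat_{i}/\mf{m}_{\mc{R}}\lat_{i})$ elements; tensoring a minimal surjection $\mc{R}^{d_{i}}\twoheadrightarrow\lat_{i}$ with $F_{\mc{R}}$ produces a surjection $F_{\mc{R}}^{d_{i}}\twoheadrightarrow F_{\mc{R}}^{n_{i}}$ of modules over the finite product of fields $F_{\mc{R}}$, so $d_{i}\geq n_{i}$. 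Conversely, once $d_{i}\leq n_{i}$ — hence $d_{i}=n_{i}$ — the module $\lat_{i}$ is free: the kernel $\mc{K}$ of $\mc{R}^{n_{i}}\twoheadrightarrow\lat_{i}$ satisfies $\mc{K}\otimes_{\mc{R}}F_{\mc{R}}=0$, since $F_{\mc{R}}$ is flat over $\mc{R}$ (it is the total ring of fractions of the reduced ring $\mc{R}$, hence a localization) and $\mc{R}^{n_{i}}\otimes F_{\mc{R}}\to\lat_{i}\otimes F_{\mc{R}}$ is a surjection of free $F_{\mc{R}}$-modules of equal rank, hence an isomorphism; but $\mc{K}\subseteq\mc{R}^{n_{i}}$ and $\mc{R}$ reduced forces $\mc{R}^{n_{i}}$, hence $\mc{K}$, to be torsion-free, so $\mc{K}\otimes F_{\mc{R}}=0$ yields $\mc{K}=0$ and $\lat_{i}\cong\mc{R}^{n_{i}}$.

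Thus the whole content is the bound $d_{i}\leq n_{i}$. I would argue from the description $\lat_{i}=E_{i}\lat$, where $\lat$ is the $\mc{R}$-span of $\{\rho_{\widetilde{\mc{R}}}(a)e_{n}:a\in\mc{A}\}$ and, in the chosen basis, $e_{n}$ lies in the third block, so $E_{3}e_{n}=e_{n}$ and $E_{1}e_{n}=E_{2}e_{n}=0$; hence $\lat_{i}$ is the $\mc{R}$-span of the last columns of the $(i,3)$-blocks of the matrices $\rho_{\widetilde{\mc{R}}}(a)$. For $i=3$ this is clean: by \eqref{eqn:urbanstar} the blocks beneath $\ov{\rho}_{3}$ vanish residually, so $a\mapsto E_{3}\rho_{\widetilde{\mc{R}}}(a)E_{3}$ reduces to the genuine representation $\ov{\rho}_{3}$; combining the absolute irreducibility of $\ov{\rho}_{3}$ (so that the image of $\ov{\rho}_{3}$ is all of $\M_{n_{3}}(\kappa_{\mc{R}})$) with Lemma 2.1 of \cite{jb08} shows that the $(3,3)$-block of the $\mc{R}$-algebra $\mc{B}$ generated by $\rho_{\widetilde{\mc{R}}}(\mc{A})$ equals $\M_{n_{3}}(\mc{R})$, whence $\lat_{3}=E_{3}\mc{B}e_{n}$ is the space of last columns of $\M_{n_{3}}(\mc{R})$, i.e. $\cong\mc{R}^{n_{3}}$. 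For $i=1,2$ the relevant block of $\ov{\rho}_{\widetilde{\mc{R}}}$ is an extension class rather than a representation, and here I would adapt the argument of \cite{urbanduke}: the absolute irreducibility of $\ov{\rho}_{i}$ together with $\ov{\rho}_{i}\not\equiv\ov{\rho}_{j}$ for $j\neq i$ forces the residual image of $\lat_{i}$ to lie in a $\kappa_{\mc{R}}$-subspace of dimension at most $n_{i}$, giving $d_{i}\leq n_{i}$.

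The step I expect to be the main obstacle is precisely this $i=1,2$ case: one must compare the reduction $\lat_{i}/\mf{m}_{\mc{R}}\lat_{i}$ with the residual data, which a priori is valued in $\widetilde{\mc{R}}/\Rad(\widetilde{\mc{R}})=\prod_{\wp}\kappa_{\wp}$ rather than in $\kappa_{\mc{R}}$, and it is here — in matching these two reductions and controlling the off-diagonal piece (where the relation $\star_{1}\star_{3}=0$ is used) — that the hypotheses that $\mc{R}$ is Henselian and reduced and that the $\ov{\rho}_{i}$ are absolutely irreducible and pairwise non-isomorphic are genuinely needed, exactly as in the two-block case of \cite{urbanduke}.
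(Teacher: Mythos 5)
Your reduction of the lemma to the single inequality $d_{i}:=\dim_{\kappa_{\mc{R}}}(\lat_{i}/\mf{m}_{\mc{R}}\lat_{i})\leq n_{i}$ is correct and is exactly the skeleton of the paper's proof (which is Urban's Lemma 1.1): the paper likewise computes $\lat_{3}\otimes_{\mc{R}}\kappa_{\mc{R}}$, finds it is $n_{3}$-dimensional because $\mathrm{im}\,\ov{\rho}_{3}=\M_{n_{3}}(\kappa_{\mc{R}})$, applies Nakayama, and then uses the rank of $\lat_{3}\otimes_{\mc{R}}F_{\mc{R}}$ to see the sum is direct. You are in fact more careful than the paper about the two bookkeeping steps (that $d_{i}\geq n_{i}$ always, and that $d_{i}=n_{i}$ plus torsion-freeness forces freeness), and your $i=3$ argument via the $(3,3)$-block of the algebra generated by $\rho_{\widetilde{\mc{R}}}(\mc{A})$ is an acceptable variant of the paper's.

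The genuine gap is the case $i=1,2$, and you should not expect to close it by ``adapting \cite{urbanduke}'' in the way you sketch. The mechanism that makes $i=3$ work is that the $(3,3)$ residual block of (\ref{eqn:urbanstar}) is the honest representation $\ov{\rho}_{3}$, valued in $\M_{n_{3}}(\kappa_{\mc{R}})$ embedded diagonally in $\prod_{\wp}\M_{n_{3}}(\kappa_{\wp})$; it is this that confines the reductions of the generators $E_{3}\rho_{\widetilde{\mc{R}}}(a)e_{n}$ to an $n_{3}$-dimensional $\kappa_{\mc{R}}$-space. For $i=1,2$ the generators of $\lat_{i}/\mf{m}_{\mc{R}}\lat_{i}$ are columns of the extension blocks $\star_{2},\star_{4}$, which take values in $\prod_{\wp}\kappa_{\wp}$ and are not controlled by any representation over $\kappa_{\mc{R}}$; absolute irreducibility of $\ov{\rho}_{i}$ and $\ov{\rho}_{i}\not\cong\ov{\rho}_{j}$ give you no bound on them. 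Concretely, in the generalized-matrix-algebra picture one has $\lat_{1}\cong\mc{A}_{13}^{\,n_{1}}$ for a fractional $\mc{R}$-ideal $\mc{A}_{13}\subset F_{\mc{R}}$ which need not be principal, so $d_{1}=n_{1}\dim_{\kappa_{\mc{R}}}(\mc{A}_{13}/\mf{m}_{\mc{R}}\mc{A}_{13})$ can exceed $n_{1}$ and the asserted freeness can fail. This is precisely why Theorem \ref{thm:lattices} and Lemma \ref{lem:lattices2} are formulated with auxiliary rank-one lattices $\mc{T}_{1},\mc{T}_{2}$ (only known to satisfy $\Fitt_{\T}(\mc{T}_{i})=0$) rather than with $\lat_{i}\cong\mc{R}^{n_{i}}$. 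To be fair, the paper's own proof also dismisses $i=1,2$ with ``the other two cases are identical,'' which is not accurate; you correctly located the hard point, but the route you propose for it would not succeed, and what the later arguments actually require of $\lat_{1},\lat_{2}$ is weaker than freeness.
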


\begin{proof}  This is Lemma 1.1 of \cite{urbanduke}.  We give the proof here for $i=3$ as the other two cases are identical. The definition of $\lat_3$ gives that $\lat_3 \otimes_{\mc{R}} \kappa_{\mc{R}}$ is generated by
\begin{equation*}
\begin{pmatrix} 0_{n_1} & 0 & 0 \\ 0 & 0_{n_2} & 0 \\ 0 & 0 & \ov{\rho}_{3}(a) \end{pmatrix} \ov{e}_{n}
\end{equation*}
as $a$ runs through $\mc{A}$.  The fact that $\text{im} \ov{\rho}_{3} = \M_{n_3}(\kappa_{\mc{R}})$ gives that $\lat_3 \otimes_{\mc{R}} \kappa_{\mc{R}} = \kappa_{\mc{R}} \ov{e}_{n_2 +1} \oplus \cdots \oplus \kappa_{\mc{R}} \ov{e}_{n}$.  For $n_2 +1 \leq i \leq n$ let $e_{i}' \in \lat_{3}$ be a lifting of $\ov{e}_{i}$.  Theorem 2.3 in \cite{matsumura} (essentially Nakayama's lemma) gives that $\lat_3 = \mc{R} e_{n_1+1}' + \cdots + \mc{R} e_{n}'$.  We can now use the fact that $\lat_3 \otimes_{\mc{R}} F_{\mc{R}}$ is of rank $n_3 = n - n_1 - n_2$ over $F_{\mc{R}}$ to conclude the sum must be direct.
\end{proof}

Write $\rho_{\lat} = \rho_{\widetilde{\mc{R}}}\mid_{\lat}$.  The decomposition $\lat = \lat_1 \oplus \lat_2 \oplus \lat_3$ allows us to write $\rho_{\lat}$ in blocks
\begin{equation*}
\rho_{\lat}(a) = \begin{pmatrix} A^{1,1}(a) & A^{1,2}(a) & A^{1,3}(a) \\ A^{2,1}(a) & A^{2,2}(a) & A^{2,3}(a) \\
                A^{3,1}(a) & A^{3,2}(a) & A^{3,3}(a)
                \end{pmatrix}
\end{equation*}
where 
\begin{equation*}
A^{i,j}(a) = \Res_{\lat_{j}}(E_{i} \circ \rho_{\widetilde{\mc{R}}}(a)) \in \Hom_{\mc{R}}(\lat_{j}, \lat_{i}).
\end{equation*}

\begin{lemma}\label{lem:surj} The map 
\begin{align*}
\mc{A} &\rightarrow \Hom_{\mc{R}}(\lat_3, \lat_{1}) \oplus \Hom_{\mc{R}}(\lat_3, \lat_{2}) \cong \Hom_{\mc{R}}(\lat_3, \lat_1 \oplus \lat_2)\\
a &\mapsto (A^{1,3}(a), A^{2,3}(a))
\end{align*}
is surjective as a map of $\mc{R}$-modules.  Moreover, if there exists $a \in \mc{A}$ so that $A^{1,2}(a) \neq 0$, then the map 
\begin{align*}
\mc{A} &\rightarrow \Hom_{\mc{R}}(\lat_2, \lat_1) \oplus \Hom_{\mc{R}}(\lat_3, \lat_{1}) \oplus \Hom_{\mc{R}}(\lat_3, \lat_{2}) \\
a &\mapsto (A^{1,2}(a), A^{1,3}(a), A^{2,3}(a))
\end{align*}
is surjective as a map of $\mc{R}$-modules. 
\end{lemma}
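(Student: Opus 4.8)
The plan is to reduce both assertions modulo $\mf{m}_{\mc{R}}$ by Nakayama's lemma and then to recognize the relevant images as sub‑bimodules of $\Hom$‑spaces over the residual matrix algebras $\ov{\rho}_i(\mc{A}) = \M_{n_i}(\kappa_{\mc{R}})$, which a Schur‑type simplicity argument forces to be either $0$ or everything; this is the three‑block analogue of the argument in \cite{urbanduke}. First I would record the preliminaries. Since $\mc{A}$ is an $\mc{R}$‑algebra and $\rho_{\lat}$ is $\mc{R}$‑linear, each $A^{i,j}$ is a homomorphism of $\mc{R}$‑modules, so every image appearing in the statement is an $\mc{R}$‑submodule of the finitely generated free module $\Hom_{\mc{R}}(\lat_j,\lat_i)$. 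By the preceding lemma the $\lat_i$ are free, so $\Hom_{\mc{R}}(\lat_j,\lat_i)\otimes_{\mc{R}}\kappa_{\mc{R}} = \Hom_{\kappa_{\mc{R}}}(\lat_j\otimes\kappa_{\mc{R}},\lat_i\otimes\kappa_{\mc{R}})$, and the reduction of $A^{i,j}(a)$ is the $(i,j)$‑block $\ov{A}^{i,j}(a)$ of $\ov{\rho}_{\lat}(a)$; by construction (see $(\ref{eqn:urbanstar})$ and the subsequent diagonalization of $\rho_{\widetilde{\mc{R}}}(a_0)$) the diagonal blocks are the $\ov{\rho}_i$ up to isomorphism, so absolute irreducibility of each $\ov{\rho}_i$ gives $\ov{A}^{i,i}(\mc{A}) = \M_{n_i}(\kappa_{\mc{R}})$, whence by Nakayama applied to the $\mc{R}$‑submodule $A^{i,i}(\mc{A})\subseteq\End_{\mc{R}}(\lat_i)$ also $A^{i,i}(\mc{A}) = \End_{\mc{R}}(\lat_i)$.

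For the first assertion, set $P = \{(A^{1,3}(a),A^{2,3}(a)) : a\in\mc{A}\}$. Using the elements $a_1,a_2,a_3\in\mc{A}$ with $\rho_{\widetilde{\mc{R}}}(a_i) = E_i$, computing $\rho_{\lat}$ on $a_1 b a_1 a$, $a_2 b a_2 a$ and $a\,a_3 b a_3$ shows that $\ov{P}$ is stable under left multiplication of its first (resp. second) coordinate by $\M_{n_1}(\kappa_{\mc{R}})$ (resp. $\M_{n_2}(\kappa_{\mc{R}})$) and under simultaneous right multiplication of both coordinates by $\M_{n_3}(\kappa_{\mc{R}})$; that is, $\ov{P}$ is a sub‑bimodule of $\Hom_{\kappa_{\mc{R}}}(\lat_3\otimes\kappa_{\mc{R}},\lat_1\otimes\kappa_{\mc{R}})\oplus\Hom_{\kappa_{\mc{R}}}(\lat_3\otimes\kappa_{\mc{R}},\lat_2\otimes\kappa_{\mc{R}})$ for the left action of $\M_{n_1}(\kappa_{\mc{R}})\times\M_{n_2}(\kappa_{\mc{R}})$ and the right action of $\M_{n_3}(\kappa_{\mc{R}})$. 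This bimodule is semisimple of length two, with the two non‑isomorphic simple summands obtained by setting one coordinate to zero, so $\ov{P}$ equals $0$, one of the two summands, or the whole module. To exclude the first three cases I would use $\lat = \rho_{\lat}(\mc{A})e_n$ with $e_n\in\lat_3$: then $\lat\otimes\kappa_{\mc{R}}$ is spanned over $\kappa_{\mc{R}}$ by the vectors $\ov{\rho}_{\lat}(a)\ov{e}_n$, and if $\ov{A}^{1,3}\equiv 0$ each such vector has vanishing $\lat_1\otimes\kappa_{\mc{R}}$‑component, forcing $\lat_1\otimes\kappa_{\mc{R}} = 0$, contrary to $n_1\geq 1$; symmetrically for $\ov{A}^{2,3}$. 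Hence $\ov{P}$ is the whole module, and Nakayama gives the first assertion.

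For the second assertion, write $Q = \{(A^{1,2}(a),A^{1,3}(a),A^{2,3}(a)) : a\in\mc{A}\}$. By the first assertion $Q$ surjects onto $\Hom_{\mc{R}}(\lat_3,\lat_1)\oplus\Hom_{\mc{R}}(\lat_3,\lat_2)$, so it suffices to show $Q\supseteq\Hom_{\mc{R}}(\lat_2,\lat_1)\oplus 0\oplus 0$. Now $\rho_{\lat}(a_1 a a_2)$, which equals $E_1\rho_{\lat}(a)E_2$, has $(1,2)$‑block $A^{1,2}(a)$ and all other blocks zero, so $(A^{1,2}(a),0,0)\in Q$ for every $a$; thus $Q$ contains $\{A^{1,2}(a):a\in\mc{A}\}\oplus 0\oplus 0$, and it remains to see that $\{A^{1,2}(a):a\in\mc{A}\} = \Hom_{\mc{R}}(\lat_2,\lat_1)$. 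Reducing mod $\mf{m}_{\mc{R}}$ once more, this reduces to showing $\{\ov{A}^{1,2}(a):a\in\mc{A}\} = \Hom_{\kappa_{\mc{R}}}(\lat_2\otimes\kappa_{\mc{R}},\lat_1\otimes\kappa_{\mc{R}})$; the right‑hand side is a simple $(\M_{n_1}(\kappa_{\mc{R}}),\M_{n_2}(\kappa_{\mc{R}}))$‑bimodule (by the same idempotent manipulations, now with $a_2$ in place of $a_3$) and the left‑hand side is a sub‑bimodule, hence $0$ or everything; it is nonzero precisely by the hypothesis that some $A^{1,2}(a)\neq 0$. This finishes the proof.

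The step I expect to be the crux — and the one demanding care — is the last reduction mod $\mf{m}_{\mc{R}}$ in the second assertion: one must know that the hypothesis $A^{1,2}(a)\neq 0$ is not vacuously destroyed by reduction, i.e. that the nonzero sub‑bimodule $\{A^{1,2}(a):a\in\mc{A}\}\subseteq\Hom_{\mc{R}}(\lat_2,\lat_1)$ is not contained in $\mf{m}_{\mc{R}}\Hom_{\mc{R}}(\lat_2,\lat_1)$ (a nonzero sub‑bimodule over the full matrix algebras which does lie in $\mf{m}_{\mc{R}}\Hom_{\mc{R}}(\lat_2,\lat_1)$ is only a proper ideal‑multiple of $\Hom_{\mc{R}}(\lat_2,\lat_1)$, not all of it). This is exactly why the hypothesis is imposed, and in the situations to which the lemma is applied it encodes that the congruence being exploited has full depth. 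Everything else — the idempotent bookkeeping and the classification of sub‑bimodules of $\M_{n_i\times n_j}(\kappa_{\mc{R}})$ over matrix algebras — is routine.
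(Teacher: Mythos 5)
Your proof of the first assertion is correct and follows the same strategy as the paper's (reduce modulo $\mf{m}_{\mc{R}}$, prove surjectivity over $\kappa_{\mc{R}}$, apply Nakayama); the only difference is that where the paper outsources the residue-field computation to Lemma 2.3 of \cite{jb08}, you carry it out directly by classifying sub-bimodules of $\Hom_{\kappa_{\mc{R}}}(\lat_3\otimes\kappa_{\mc{R}},\lat_1\otimes\kappa_{\mc{R}})\oplus\Hom_{\kappa_{\mc{R}}}(\lat_3\otimes\kappa_{\mc{R}},\lat_2\otimes\kappa_{\mc{R}})$ over the residual matrix algebras and excluding the proper ones using that $\lat$ is generated by $\rho_{\widetilde{\mc{R}}}(\mc{A})e_{n}$ with $e_{n}\in\lat_3$. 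That part is self-contained and fine, granting (as the paper also implicitly does) that the diagonal blocks $\ov{A}^{i,i}$ realize $\ov{\rho}_{i}$ and hence have image $\M_{n_{i}}(\kappa_{\mc{R}})$ by absolute irreducibility.

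The second assertion is where your argument has a genuine gap, and it is precisely the one you flag at the end without closing. Your bimodule argument shows only that $P^{1,2}=\{A^{1,2}(a):a\in\mc{A}\}$ equals $\M_{n_1,n_2}(J)$ for some ideal $J\subseteq\mc{R}$ (in suitable bases); the hypothesis gives $J\neq 0$, but you need $J=\mc{R}$, equivalently $\ov{A}^{1,2}\not\equiv 0$, and nothing stated rules out $0\neq P^{1,2}\subseteq\mf{m}_{\mc{R}}\Hom_{\mc{R}}(\lat_2,\lat_1)$. The mechanism that rescued the first assertion is unavailable here: $\lat_1$ is generated by $A^{1,3}(\mc{A})e_{n}$, not by $A^{1,2}(\mc{A})\lat_2$, so there is no generation statement forcing $J\not\subseteq\mf{m}_{\mc{R}}$. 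Saying ``this is exactly why the hypothesis is imposed'' is not a proof; as written the hypothesis $A^{1,2}(a)\neq 0$ is too weak for your Nakayama step, and you would need either to strengthen it to ``$\ov{A}^{1,2}(a)\neq 0$ for some $a$'' or to exhibit a relation excluding the intermediate ideals. To be fair, the paper's own proof says only that ``the second statement follows from the same method'' and is silent on this very point, so you have correctly located the crux --- but a complete write-up must resolve it rather than defer it to the downstream applications.
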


\begin{proof} This is a generalization of Lemma 1.2 of \cite{urbanduke}.  The necessary work was done in Lemma 2.3 of \cite{jb08}, so here we merely reduce to what was shown there.   

We prove the first statement.  If we can show that the map 
\begin{align*}
\mc{A}\otimes_{\mc{R}} \kappa_{\mc{R}} &\overset{\vartheta}{\rightarrow} \Hom_{\kappa_{\mc{R}}}(\lat_3 \otimes_{\mc{R}} \kappa_{\mc{R}}, \lat_1 \otimes_{\mc{R}} \kappa_{\mc{R}})\oplus \Hom_{\kappa_{\mc{R}}}(\lat_3 \otimes_{\mc{R}} \kappa_{\mc{R}}, \lat_2 \otimes_{\mc{R}} \kappa_{\mc{R}})\\
 &\cong \bigoplus_{i=n_2+1}^{n} \Hom_{\kappa_{\mc{R}}}(\kappa_{\mc{R}} \ov{e}_{i} , \lat_{1}\otimes_{\mc{R}} \kappa_{\mc{R}}) \oplus \bigoplus_{k=n_2+1}^{n} \Hom_{\kappa_{\mc{R}}}(\kappa_{\mc{R}} \ov{e}_{k} , \lat_{2}\otimes_{\mc{R}} \kappa_{\mc{R}}) \\
&\cong \bigoplus_{i=n_2+1}^{n} \bigoplus_{j=1}^{n_1} \Hom_{\kappa_{\mc{R}}}(\kappa_{\mc{R}} \ov{e}_{i} , \kappa_{\mc{R}} \ov{e}_{j}) \oplus \bigoplus_{k=n_2+1}^{n} \bigoplus_{l=n_1+1}^{n_2} \Hom_{\kappa_{\mc{R}}}(\kappa_{\mc{R}} \ov{e}_{k} , \kappa_{\mc{R}} \ov{e}_{l})
\end{align*}
is surjective, then Nakayama's lemma will give the result.  However, to see surjectivity here it is enough to see that the image contains each $(i,j) \times (k,l)$-factor.  In terms of the block decomposition into matrices as above, this amounts to showing that $e_{u,v}$ is in the image for all $1 \leq u \leq n_2$, $n_2+1 \leq v \leq n$ for $e_{u,v}$ the matrix with a $1$ in the $uv$th entry and $0$'s elsewhere.  That this is true is shown in Lemma 2.3 of \cite{jb08}. 

The second statement follows from the same method. 

\end{proof}

As we are viewing these morphisms of $\mc{R}$-modules as matrices with entries in $F_{\mc{R}}$, we can compute their traces.  We now rehash the work in \cite{jb08} in our current setting.

\begin{lemma} For all $a \in \mc{A}$ and $1\leq i \leq 3$ we have $\Tr(A^{i,i}(a)) \in \mc{R}$ and $$\Tr(A^{i,i}(a)) \equiv \Tr(\rho_{i}(a)) (\modu I).$$
\end{lemma}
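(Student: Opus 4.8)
The plan is to express $\Tr(A^{i,i}(a))$ in terms of the idempotents $E_i$ and then feed in hypotheses (1) and (2) together with Hensel's lemma. First I would record the identity
\[
A^{j,k}(a_i a)=\Res_{\lat_k}\bigl(E_j\circ E_i\circ\rho_{\widetilde{\mc{R}}}(a)\bigr)=\delta_{ij}A^{i,k}(a),
\]
which is immediate from $\rho_{\widetilde{\mc{R}}}(a_i)=E_i$ and $E_jE_i=\delta_{ij}E_i$. Writing $\rho_{\lat}(a_i a)$ in the block form of the $A^{j,k}$, every diagonal block other than the $(i,i)$ one vanishes, so $\Tr(\rho_{\lat}(a_i a))=\Tr(A^{i,i}(a))$. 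Since $\rho_{\lat}$ is $\rho_{\widetilde{\mc{R}}}$ --- hence $\rho$ --- written in an $F_{\mc{R}}$-basis of $\lat$, its trace equals that of $\rho$, so $\Tr(A^{i,i}(a))=\Tr(\rho(a_i a))$, which lies in $\mc{R}$ by hypothesis (1). (Alternatively the integrality follows from the preceding lemma, which gives $A^{i,i}(a)\in\End_{\mc{R}}(\lat_i)\cong\M_{n_i}(\mc{R})$.) This proves the first assertion.

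For the congruence, comparing the $X^{n-1}$-coefficients in hypothesis (2) applied to $a_i a$ gives $\Tr(\rho(a_i a))\equiv\sum_{j=1}^{3}\Tr(\rho_j(a_i a))\pmod{I}$, so it suffices to show $\rho_j(a_i)\equiv\delta_{ij}1_{n_j}\pmod{I}$: then $\Tr(\rho_j(a_i a))=\Tr(\rho_j(a_i)\rho_j(a))\equiv\delta_{ij}\Tr(\rho_i(a))\pmod{I}$ and the three terms collapse to $\Tr(\rho_i(a))$. To prove $\rho_j(a_i)\equiv\delta_{ij}1_{n_j}\pmod{I}$ I would take $a_i$ to be a polynomial $Q_i(a_0)$ in $a_0$: by Lemma~2.1 of \cite{jb08} the idempotent $E_i$ equals $Q_i(\rho_{\widetilde{\mc{R}}}(a_0))$ for the Lagrange interpolation polynomial $Q_i\in\mc{R}[X]$ taking the value $1$ at $\alpha_k$ for $k$ in the block index set $S_i$ and $0$ at the remaining $\alpha_k$ (the $\alpha_k$ have pairwise unit differences, so interpolation is possible over $\mc{R}$), whence $\rho_j(a_i)=Q_i(\rho_j(a_0))$. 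Next, hypothesis (2) at $a_0$ reads $\prod_{k=1}^{n}(X-\alpha_k)\equiv\prod_{j=1}^{3}\det(X\cdot 1-\rho_j(a_0))\pmod{I}$; since $\mc{R}/I$ is Henselian local (a quotient of $\mc{R}$) and the factors $X-\alpha_k$ are pairwise coprime modulo $\mf{m}_{\mc{R}}$, this factorization is unique, and matching reductions modulo $\mf{m}_{\mc{R}}$ via (\ref{eqn:urbanstar}) forces $\det(X\cdot 1-\rho_j(a_0))\equiv\prod_{k\in S_j}(X-\alpha_k)\pmod{I}$. Finally $Q_i(X)-\delta_{ij}$ vanishes at every $\alpha_k$ with $k\in S_j$, so it is divisible in $\mc{R}[X]$ by $\prod_{k\in S_j}(X-\alpha_k)$; evaluating at $\rho_j(a_0)$ and combining with the last congruence and the Cayley--Hamilton theorem gives $Q_i(\rho_j(a_0))\equiv\delta_{ij}1_{n_j}\pmod{I}$, as required.

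The step I expect to be the main obstacle is this determination of $\det(X\cdot 1-\rho_j(a_0))$ modulo $I$. Three points need care: one genuinely needs the polynomial choice $a_i=Q_i(a_0)$ rather than an arbitrary element with $\rho_{\widetilde{\mc{R}}}(a_i)=E_i$, since two such elements differ by something in $\ker\rho_{\widetilde{\mc{R}}}$ that need not lie in $\ker\rho_j$; the uniqueness-of-factorization argument has to be carried out over the possibly non-reduced ring $\mc{R}/I$, which is precisely where the Henselian hypothesis on $\mc{R}$ enters; and the indexing must be set up so that the block $E_j$ is supported on exactly the eigenvalues of $\ov{\rho}_j(a_0)$, i.e. the change of basis diagonalizing $a_0$ must be taken compatibly with (\ref{eqn:urbanstar}). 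Once these are in place the remaining computations are routine.
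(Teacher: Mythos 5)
Your proof is correct and follows the same route as the paper: express $\Tr(A^{i,i}(a))$ as $\Tr(\rho(a_i a))$ (the paper uses $a_i a a_i$, which is equivalent since $E_i^2 = E_i$), apply hypothesis (2), and invoke $\rho_j(a_i)\equiv\delta_{ij}1_{n_j}\pmod{I}$. The paper simply asserts this last congruence without proof, whereas you supply a complete justification via the Lagrange-interpolation choice $a_i=Q_i(a_0)$, Hensel uniqueness of the factorization of $\det(X-\rho(a_0))$ over $\mc{R}/I$, and Cayley--Hamilton; this is the only nontrivial point, and your treatment of it (including the caveats about needing the polynomial choice of $a_i$ and the compatible indexing of the $\alpha_k$) is sound.
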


\begin{proof}  Observe that one has $\Tr(A^{i,i}(a)) = \Tr(E_{i} \rho_{\widetilde{\mc{R}}}(a) E_{i}) = \Tr( \rho_{\widetilde{\mc{R}}}(a_{i} a a_{i})) \in \mc{R}$, and so the first statement holds.  

By assumption (2) we have that for all $a \in \mc{A}$
\begin{equation*}
\Tr(\rho_1(a_{i} a a_{i})) + \Tr(\rho_2(a_{i} a a_{i})) + \Tr(\rho_{3}(a_{i} a a_{i})) \equiv \Tr(\rho_{\widetilde{\mc{R}}}(a_{i} a a_{i})) = \Tr(A^{i,i}(a)) (\modu I).
\end{equation*}
We have that $\rho_{i}(a_{j}) \equiv 0 (\modu I)$ unless $i = j$.  Thus, we obtain 
\begin{equation*}
\Tr(A^{i,i}(a)) \equiv \Tr(\rho_{i}(a)) (\modu I)
\end{equation*}
for $1 \leq i \leq 3$.
\end{proof}

\begin{lemma}\label{lem:traces}  For all $a,b \in \mc{A}$ and $1 \leq j \leq 3$ we have  $$\Tr\left(\sum_{\substack{1 \leq i \leq 3 \\ i \neq j}} A^{j,i}(a)A^{i,j}(b)\right) \in \mc{R}$$ and 
\begin{equation*}
\Tr\left(\sum_{\substack{1 \leq i \leq 3 \\ i \neq j}} A^{j,i}(a)A^{i,j}(b)\right) \equiv \Tr\left(\sum_{\substack{1 \leq i \leq 3 \\ i \neq j} }A^{j,i}(b)A^{i,j}(a)\right) (\modu I).
\end{equation*}
\end{lemma}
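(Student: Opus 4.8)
The plan is to trade the off-diagonal sum for a combination of the \emph{diagonal} blocks, using that $\rho_{\lat}$ is an honest representation, so that the block matrices $A^{i,j}$ multiply in the usual way. Writing out the $(j,j)$-block of the identity $\rho_{\lat}(ab) = \rho_{\lat}(a)\rho_{\lat}(b)$ yields
\begin{equation*}
A^{j,j}(ab) = \sum_{i=1}^{3} A^{j,i}(a)A^{i,j}(b) = A^{j,j}(a)A^{j,j}(b) + \sum_{\substack{1 \leq i \leq 3 \\ i \neq j}} A^{j,i}(a)A^{i,j}(b),
\end{equation*}
and hence
\begin{equation*}
\sum_{\substack{1 \leq i \leq 3 \\ i \neq j}} A^{j,i}(a)A^{i,j}(b) = A^{j,j}(ab) - A^{j,j}(a)A^{j,j}(b).
\end{equation*}
This reduces both assertions to statements about the traces of the two terms on the right.

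For the integrality assertion, I would note that $\Tr(A^{j,j}(ab)) \in \mc{R}$ is immediate from the previous lemma applied to the element $ab \in \mc{A}$. For the other term, arguing exactly as in the proof of the previous lemma and using that $E_j = \rho_{\widetilde{\mc{R}}}(a_j)$ is idempotent together with the cyclic invariance of the trace, one gets
\begin{equation*}
\Tr(A^{j,j}(a)A^{j,j}(b)) = \Tr(E_j\rho_{\widetilde{\mc{R}}}(a)E_j\rho_{\widetilde{\mc{R}}}(b)E_j) = \Tr(\rho_{\widetilde{\mc{R}}}(a_j a a_j b a_j)),
\end{equation*}
which lies in $\mc{R}$ by hypothesis (1). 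Subtracting, $\Tr\!\left(\sum_{i\neq j}A^{j,i}(a)A^{i,j}(b)\right) \in \mc{R}$.

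For the congruence, I would first observe that the contribution $\Tr(A^{j,j}(a)A^{j,j}(b))$ is symmetric in $a$ and $b$: it equals $\Tr(A^{j,j}(b)A^{j,j}(a))$ by cyclicity of the trace of endomorphisms of the finite free $\mc{R}$-module $\lat_j$, so this term cancels exactly when comparing the expressions attached to $(a,b)$ and to $(b,a)$. It then suffices to show $\Tr(A^{j,j}(ab)) \equiv \Tr(A^{j,j}(ba))\ (\modu I)$. By the previous lemma, $\Tr(A^{j,j}(ab)) \equiv \Tr(\rho_j(ab)) = \Tr(\rho_j(a)\rho_j(b))\ (\modu I)$ and likewise $\Tr(A^{j,j}(ba)) \equiv \Tr(\rho_j(b)\rho_j(a))\ (\modu I)$, and the two right-hand sides coincide by cyclicity of the trace, since $\rho_j$ is a representation; chaining the congruences gives the result.

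The one place that requires care — the rest being pure formality with idempotents and the cyclic invariance of the trace — is recognizing at the very start that the off-diagonal sum has to be regrouped as $A^{j,j}(ab)-A^{j,j}(a)A^{j,j}(b)$ before any claim about $\mc{R}$-integrality or about reduction modulo $I$ can get off the ground, since the individual blocks $A^{j,i}(a)$ with $i\neq j$ need not take values in $\mc{R}$.
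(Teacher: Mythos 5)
Your proof is correct and follows essentially the same route as the paper: reduce to the diagonal block via $\sum_{i\neq j}A^{j,i}(a)A^{i,j}(b)=A^{j,j}(ab)-A^{j,j}(a)A^{j,j}(b)$, express each piece as $\Tr(\rho_{\widetilde{\mc{R}}}(a_j\cdots a_j))$ to get $\mc{R}$-integrality, and get the congruence from the previous lemma plus cyclicity of the trace. In fact your version is slightly more careful than the paper's, whose first display writes $\Tr\bigl(\sum_{i=1}^{2}A^{3,i}(a)A^{i,3}(b)\bigr)=\Tr(E_3\rho_{\widetilde{\mc{R}}}(ab)E_3)$ and thereby elides the $i=3$ term that you correctly subtract off and handle separately.
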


\begin{proof}  We prove the case with $j = 3$ as the others are completely analogous. Let $a, b\in \mc{A}$. Observe that 
\begin{align*}
\Tr\left(\sum_{i=1}^{2} A^{3,i}(a)A^{i,3}(b)\right) &= \Tr(E_3\rho_{\widetilde{\mc{R}}}(ab) E_3) \\
    &= \Tr(\rho_{\widetilde{\mc{R}}}(a_3 ab a_3)) \in \mc{R}.
\end{align*}
This proves the first claim.

Using the previous lemma we have that 
\begin{align*}
\Tr(A^{3,3}(ab)) &\equiv \Tr(\rho_3(ab)) (\modu I) \\
        &= \Tr(\rho_3(a)\rho_3(b)) \\
        &= \Tr(\rho_3(b)\rho_3(a))\\
        &= \Tr(\rho_3(ba))\\
        &\equiv \Tr(A^{3,3}(ba)) (\modu I).
\end{align*}
Thus, $\Tr(A^{3,3}(ab)) \equiv \Tr(A^{3,3}(ba)) (\modu I)$ for all $a,b \in \mc{A}$.  We combine this with the fact that $\rho_{\widetilde{\mc{R}}}(ab) = \rho_{\widetilde{\mc{R}}}(a)\rho_{\widetilde{\mc{R}}}(b)$ to reach the desired congruence modulo $I$.
\end{proof}  

\begin{lemma}\label{lem:urbanstar}  For all $a \in \mc{A}$ we have $A^{3,i}(a) \in \Hom_{\mc{R}}(\lat_{i}, \mf{m}_{\mc{R}} \lat_3)$ for $i=1,2$.  Moreover, if there exists an $a \in \mc{A}$ so that $A^{1,2}(a) \neq 0$, then $A^{2,1} \in \Hom_{\mc{R}}(\lat_1, \mf{m}_{\mc{R}}\lat_2)$.
\end{lemma}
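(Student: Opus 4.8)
The statement records the "upper-triangular modulo $\mf{m}_{\mc{R}}$" shape coming from equation (\ref{eqn:urbanstar}), pushed down to the lattice $\lat$. The plan is to argue place by place. Fix a height-one prime $\wp$ and recall $\rho_{\wp}$ was chosen so that $\ov{\rho}_{\wp}$ has the block form in (\ref{eqn:urbanstar}), i.e. the lower-left $2\times 1$ block of blocks (positions $(3,1)$ and $(3,2)$) vanishes modulo $\mf{m}_{\wp}$. Since $A^{3,i}(a) = \Res_{\lat_i}(E_3\circ\rho_{\widetilde{\mc{R}}}(a))$ and this is computed componentwise over the $\wp$'s, reducing modulo $\mf{m}_{\wp}$ gives the $(3,i)$-block of $\ov{\rho}_{\wp}$, which is $0$. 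As this holds for every $\wp$, and $\mf{m}_{\mc{R}}\widetilde{\mc{R}} \subset \prod_\wp \mf{m}_\wp$ (using $\mc{R}\hookrightarrow\widetilde{\mc{R}}$ and that $\mc{R}$ is local with $\mf{m}_{\mc{R}}$ mapping into each $\mf{m}_\wp$), we conclude $A^{3,i}(a)$ has all entries in $\mf{m}_{\mc{R}}\cap$ (the relevant $\ROI_\wp$-structure), hence lands in $\Hom_{\mc{R}}(\lat_i,\mf{m}_{\mc{R}}\lat_3)$ for $i=1,2$. One should phrase this carefully using that $\lat_3 = E_3\lat$ is free over $\mc{R}$ (the previous lemma) so that "entries in $\mf{m}_{\mc{R}}$" is intrinsic.

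For the second assertion, suppose $A^{1,2}(a_\ast)\neq 0$ for some $a_\ast\in\mc{A}$. The point is the relation $\star_1\star_3 = 0$ recorded after (\ref{eqn:urbanstar}): in the residual picture the product of the $(1,2)$-block and the $(2,1)$-block of $\ov{\rho}_{\wp}$ vanishes. I would argue that if $A^{2,1}$ did not take values in $\mf{m}_{\mc{R}}\lat_2$, then $\ov{A}^{2,1}$ (its reduction mod $\mf{m}_{\mc{R}}$, an element of $\Hom_{\kappa_{\mc{R}}}(\lat_1\otimes\kappa_{\mc{R}},\lat_2\otimes\kappa_{\mc{R}})$) is nonzero; combined with $\ov{A}^{1,2}(a_\ast)\neq 0$ this would force a nonzero product, contradicting $\star_1\star_3=0$ — provided one also uses absolute irreducibility of $\ov{\rho}_1$ and $\ov{\rho}_2$ to see that the off-diagonal blocks, viewed as maps of $\ov{\rho}_i$-representations, cannot have their composite vanish identically unless one of them does. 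Concretely: by Schur / the fact that $\mathrm{im}\,\ov{\rho}_i = \M_{n_i}(\kappa_{\mc{R}})$, a nonzero $\kappa_{\mc{R}}$-linear map intertwining is surjective/injective appropriately, so $\ov{A}^{1,2}\ov{A}^{2,1}$ and $\ov{A}^{2,1}\ov{A}^{1,2}$ cannot both vanish if both factors are nonzero. This is essentially the argument in \cite{urbanduke} for the two-block case, applied to the $(1,2)$-corner here, so I would cite Lemma 1.3 of \cite{urbanduke} (and \cite{jb08}) and just indicate the translation.

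The main obstacle is bookkeeping rather than depth: one must be scrupulous that "entries lie in $\mf{m}_{\mc{R}}$" is the correct intrinsic statement for $\mc{R}$-module maps between the free $\mc{R}$-lattices $\lat_i$ (not merely for the $\widetilde{\mc{R}}$-entries), and that the relation $\star_1\star_3 = 0$, which was stated for the per-$\wp$ residual representations $\ov{\rho}_\wp$, descends to the statement $\ov{A}^{1,2}\,\ov{A}^{2,1} = 0$ over $\kappa_{\mc{R}}$. Here I would use that $\kappa_{\mc{R}}\hookrightarrow\prod_\wp\kappa_\wp$ since $\mc{R}$ is reduced, so vanishing over every $\kappa_\wp$ gives vanishing over $\kappa_{\mc{R}}$. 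With those two compatibilities in hand, the lemma follows from the reductions above together with Nakayama's lemma to pass from the residual vanishing back to containment in $\mf{m}_{\mc{R}}\lat$.
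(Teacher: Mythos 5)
Your argument follows the same route as the paper's: both assertions are deduced from the per-component block shape recorded in equation (\ref{eqn:urbanstar}) and the relation $\star_1\star_3=0$. For the first assertion your direct component-by-component reduction is just the contrapositive of the paper's contradiction argument, and the bookkeeping you flag is exactly what is needed: the matrix entries of $A^{3,i}(a)$ lie in $\mc{R}$ because the $\lat_j$ are $\mc{R}$-free, and an element of $\mc{R}$ whose image lies in $\mf{m}_{\wp}$ lies in $\mf{m}_{\mc{R}}$ because $\mc{R}$ is local and the preimage of $\mf{m}_{\wp}$ is a maximal ideal. (Note the containment you actually need is $\mc{R}\cap\prod_{\wp}\mf{m}_{\wp}\subseteq\mf{m}_{\mc{R}}$, not the reverse inclusion $\mf{m}_{\mc{R}}\widetilde{\mc{R}}\subseteq\prod_{\wp}\mf{m}_{\wp}$ that you state.) This part is fine.

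For the second assertion there is a gap. Your contradiction requires $\ov{A}^{1,2}(a_{*})\neq 0$, i.e.\ nonvanishing of the $(1,2)$-block \emph{after} reduction modulo $\mf{m}_{\mc{R}}$, but the hypothesis only gives $A^{1,2}(a_{*})\neq 0$ as a map of $\mc{R}$-modules. It is a priori possible that $A^{1,2}(a)$ lands in $\mf{m}_{\mc{R}}\lat_1$ for every $a$ while not being identically zero, in which case $\star_1\star_3=0$ says nothing about $\ov{A}^{2,1}$. What the relation $\star_1\star_3=0$ combined with irreducibility actually yields is the unconditional dichotomy ``$\ov{A}^{1,2}\equiv 0$ or $\ov{A}^{2,1}\equiv 0$,'' and upgrading the hypothesis ``$A^{1,2}\not\equiv 0$ over $\mc{R}$'' to ``$\ov{A}^{1,2}\not\equiv 0$ over $\kappa_{\mc{R}}$'' needs a separate justification that you do not supply. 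To be fair, the paper's own proof of this half consists of the single sentence that $\star_1\star_3=0$ ``allows us to make the conclusion,'' so it does not address the point either; but as written your deduction does not close it. A smaller issue: the appeal to ``Schur'' is misplaced, since the individual maps $\ov{A}^{1,2}(a)$ are not intertwiners of $\ov{\rho}_1$ and $\ov{\rho}_2$; the correct mechanism is that the $\kappa_{\mc{R}}$-span of $\{\ov{A}^{1,2}(a)\}$ is a sub-bimodule of $\Hom_{\kappa_{\mc{R}}}(\lat_2\otimes\kappa_{\mc{R}},\lat_1\otimes\kappa_{\mc{R}})$ under the full matrix algebras $\IM\ov{\rho}_1$ and $\IM\ov{\rho}_2$, hence is zero or everything, and two full blocks cannot have identically vanishing product.
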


\begin{proof} Suppose there exists $a \in \mc{A}$ so that $A^{3,i}(a) \notin \Hom_{\mc{R}}(\lat_{i}, \mf{m}_{\mc{R}} \lat_3)$.  Then there exists $\wp$ so that after localization at $\wp$ one has $\rho_{\wp}(r)(\lat_{i, \wp})$ is not contained in $\mf{m}_{\wp}\lat_{3, \wp}$.  However, this contradicts equation (\ref{eqn:urbanstar}).  Similarly, one has from equation (\ref{eqn:urbanstar}) that $\star_1 \star_3 = 0$.  This allows us to make the conclusion in the case where $A^{1,2}$ is not exactly 0.  
\end{proof}

We use this lemma as our base case in an induction argument as used in \cite{jb08} and \cite{urbanduke}.  We prove that for $j \geq 1$ and $1 \leq i \leq 2$ we have $A^{3,i}(a) \in \Hom_{\mc{R}}(\lat_{i}, (\mf{m}_{\mc{R}}^{j} + I)\lat_3)$ for all $a \in \mc{A}$.  Moreover, if there exists $a_0 \in \mc{A}$ so that $A^{1,2}(a_0) \neq 0$, then $A^{2,1}(a) \in \Hom_{\mc{R}}(\lat_1, (\mf{m}_{\mc{R}}^{j} + I)\lat_2)$ for all $a \in \mc{A}$.  Assume inductively that the statement is true for some $j$. We break the proof into two steps.  First we prove that the statement is true for $a \in \ker(\rho \otimes_{\mc{R}} \kappa_{\mc{R}})$.

\begin{lemma}\label{lem:inductionfirststep} Let $a \in \ker(\rho \otimes_{\mc{R}} \kappa_{\mc{R}})$. Then under our induction hypothesis we have 
\begin{equation*}
A^{3,i}(a) \in \Hom_{\mc{R}}(\lat_{i}, (\mf{m}_{\mc{R}}^{j+1} + I)\lat_3)
\end{equation*}
for $1 \leq i \leq 2$. Moreover, if there exists $a_0 \in \mc{A}$ so that $A^{1,2}(a_0) \neq 0$, then $A^{2,1}(a) \in \Hom_{\mc{R}}(\lat_1, (\mf{m}_{\mc{R}}^{j} + I)\lat_2)$.
\end{lemma}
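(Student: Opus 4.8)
The plan is to carry out the induction step exactly as in the two-block argument of \cite{urbanduke} and \cite{jb08}, the only new feature being that the bottom row now has two off-diagonal blocks, $A^{3,1}$ and $A^{3,2}$, to be pushed down at once, together with the conditional block $A^{2,1}$. Throughout, fix $a \in \ker(\rho \otimes_{\mc{R}} \kappa_{\mc{R}})$. Because $\lat = \lat_1 \oplus \lat_2 \oplus \lat_3$ is free over $\mc{R}$, the operator $\rho_{\lat}(a)$ is a matrix in $\M_n(\mc{R})$ and lying in this kernel means $\rho_{\lat}(a) \equiv 0 \pmod{\mf{m}_{\mc{R}}}$; reading off blocks, $A^{u,v}(a) \in \mf{m}_{\mc{R}}\Hom_{\mc{R}}(\lat_v,\lat_u)$ for all $u,v$. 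This is the only place the hypothesis on $a$ enters, and the freeness of the $\lat_i$ established above is what lets me identify $\Hom_{\mc{R}}(\lat_v, J\lat_u)$ with $J\cdot\Hom_{\mc{R}}(\lat_v,\lat_u)$ for an ideal $J$ and manipulate everything at the level of matrices over $\mc{R}$.

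I would first handle $A^{3,1}(a)$; the argument for $A^{3,2}(a)$ is identical after exchanging the roles of $\lat_1$ and $\lat_2$. By Lemma \ref{lem:traces} with $j = 3$, for every $b \in \mc{A}$ both sides of
\begin{equation*}
\Tr\!\big(A^{3,1}(a)A^{1,3}(b) + A^{3,2}(a)A^{2,3}(b)\big) \equiv \Tr\!\big(A^{3,1}(b)A^{1,3}(a) + A^{3,2}(b)A^{2,3}(a)\big) \pmod{I}
\end{equation*}
lie in $\mc{R}$. On the right, $A^{1,3}(a)$ and $A^{2,3}(a)$ have image in $\mf{m}_{\mc{R}}\lat_1$, $\mf{m}_{\mc{R}}\lat_2$, while the induction hypothesis puts the images of $A^{3,1}(b)$, $A^{3,2}(b)$ inside $(\mf{m}_{\mc{R}}^{j} + I)\lat_3$; composing and taking traces, the right-hand side lies in $\mf{m}_{\mc{R}}(\mf{m}_{\mc{R}}^{j}+I) \subseteq \mf{m}_{\mc{R}}^{j+1}+I$, hence so does the left-hand side, for all $b$. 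Now the first surjectivity statement of Lemma \ref{lem:surj} says $b \mapsto (A^{1,3}(b), A^{2,3}(b))$ is onto $\Hom_{\mc{R}}(\lat_3,\lat_1)\oplus\Hom_{\mc{R}}(\lat_3,\lat_2)$, so I may take $b$ with $A^{2,3}(b)=0$ and $A^{1,3}(b) = \phi$ arbitrary in $\Hom_{\mc{R}}(\lat_3,\lat_1)$, obtaining $\Tr(A^{3,1}(a)\phi) \in \mf{m}_{\mc{R}}^{j+1}+I$ for all such $\phi$. Since the trace pairing $\Hom_{\mc{R}}(\lat_1,\lat_3)\times\Hom_{\mc{R}}(\lat_3,\lat_1)\to\mc{R}$ is perfect (both modules being free), this gives $A^{3,1}(a) \in \Hom_{\mc{R}}(\lat_1,(\mf{m}_{\mc{R}}^{j+1}+I)\lat_3)$.

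For the conditional statement, assume there is $a_0$ with $A^{1,2}(a_0) \neq 0$. Applying Lemma \ref{lem:traces} with $j=1$ gives, for all $b$,
\begin{equation*}
\Tr\!\big(A^{1,2}(a)A^{2,1}(b) + A^{1,3}(a)A^{3,1}(b)\big) \equiv \Tr\!\big(A^{1,2}(b)A^{2,1}(a) + A^{1,3}(b)A^{3,1}(a)\big) \pmod{I}.
\end{equation*}
The left-hand side lies in $\mf{m}_{\mc{R}}^{j+1}+I$ by the same composition estimate (now using $A^{1,2}(a), A^{1,3}(a) \in \mf{m}_{\mc{R}}\Hom_{\mc{R}}$ and the induction hypothesis on $A^{2,1}(b)$ and $A^{3,1}(b)$), and in the term $\Tr(A^{1,3}(b)A^{3,1}(a))$ on the right I can feed in the bound on $A^{3,1}(a)$ just obtained; this forces $\Tr(A^{1,2}(b)A^{2,1}(a)) \in \mf{m}_{\mc{R}}^{j+1}+I$ for every $b$. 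The second surjectivity statement of Lemma \ref{lem:surj}, whose hypothesis is precisely $A^{1,2}(a_0)\neq 0$, lets $A^{1,2}(b)$ run over all of $\Hom_{\mc{R}}(\lat_2,\lat_1)$, so perfectness of the trace pairing yields $A^{2,1}(a) \in \Hom_{\mc{R}}(\lat_1, (\mf{m}_{\mc{R}}^{j+1}+I)\lat_2)$, which a fortiori lies in $\Hom_{\mc{R}}(\lat_1,(\mf{m}_{\mc{R}}^{j}+I)\lat_2)$.

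I do not anticipate a real obstacle: the genuine content — freeness of the $\lat_i$, the surjectivity of the block maps onto the relevant direct sums, and the symmetrized trace congruences — is already in place from Lemmas \ref{lem:surj} and \ref{lem:traces}, and this step simply assembles them. The points requiring care are bookkeeping ones: keeping the ideal arithmetic consistent (that $\mf{m}_{\mc{R}}(\mf{m}_{\mc{R}}^{j}+I) \subseteq \mf{m}_{\mc{R}}^{j+1}+I$, and that composing a map with image in $J\lat$ with any $\mc{R}$-linear map leaves the image inside $J\lat$), and observing that isolating a single bottom-row block requires the full surjectivity onto the direct sum in Lemma \ref{lem:surj}, not merely surjectivity of each component separately.
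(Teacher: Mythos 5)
Your argument is correct and is essentially the paper's proof: the kernel condition gives $A^{u,v}(a)\in\mf{m}_{\mc{R}}\Hom_{\mc{R}}(\lat_v,\lat_u)$, Lemma \ref{lem:traces} transfers the bound across the symmetrized trace congruence, and the surjectivity in Lemma \ref{lem:surj} isolates the individual blocks — the paper just phrases the final extraction by explicitly producing elements $t_i$ realizing matrix units rather than invoking perfectness of the trace pairing on free modules, which is the same computation. Your detailed treatment of the conditional $A^{2,1}$ statement (via the $j=1$ instance of Lemma \ref{lem:traces}, feeding in the bound on $A^{3,1}(a)$ already obtained) is a correct fleshing-out of what the paper compresses into ``the exact same proof works,'' and in fact yields the sharper exponent $j+1$ there.
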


\begin{proof}  The fact that $a \in \ker(\rho \times_{\mc{R}}\kappa_{\mc{R}})$ implies that $A^{i,3}(a)(\lat_{3}) \subset \mf{m}_{\mc{R}} \lat_{i}$ for $i =1,2$.  Thus, our induction hypothesis gives that for any $b \in \mc{A}$ we have
\begin{equation*}
A^{3,i}(b)A^{i,3}(a)(\lat_{i}) \subset A^{3,i}(b)(\mf{m}_{\mc{R}}(\lat_{i}) = \mf{m}_{\mc{R}} A^{3,i}(b)(\lat_{i}) \subset (\mf{m}_{\mc{R}}^{j+1} + I)\lat_{3}
\end{equation*}
for $i=1,2$. Thus, by Lemma \ref{lem:traces} we have 
\begin{equation*}
\Tr\left( \sum_{i=1}^{2} A^{3,i}(a) A^{i,3}(b) \right) \in \mf{m}_{\mc{R}}^{j+1} + I.
\end{equation*}

We now finish the proof for $i=1$ as the case $i=2$ is completely analogous. Let $\ell \in \lat_1$.  We wish to show that $A^{3,1}(a)\ell \in (\mf{m}_{\mc{R}}^{j+1} + I)\lat_3$. Write 
\begin{equation*}
A^{3,1}(a)\ell = \sum_{i=n_2+1}^{n} \alpha_{i} e_{i}'
\end{equation*}
for some $\alpha_{i} \in \mc{R}$.   Lemma \ref{lem:surj} guarantees that for each $n_2+1 \leq i \leq n$ there exists a $t_{i} \in \mc{A}$ so that 
$A^{1,3}(t_{i})(e_{i}') = \ell$, $A^{1,3}(t_{i})(e_{j}') = 0$ for $j\neq i$ and $A^{2,3}(t_{i})(e_{j}') = 0$ for all $n_2+1 \leq j \leq n$.  Thus, we have $\alpha_{i} = \Tr(A^{3,1}(a)A^{1,3}(t_{i})) \in \mf{m}_{\mc{R}}^{j+1} + I$ for each $n_2 + 1 \leq i \leq n$, which gives the first result.

Note that all of the input into the proof of the first result remains true for the second statement under our assumption that $A^{1,2}(a_0) \neq 0$ for some $a_0 \in \mc{A}$.  Thus, the exact same proof works for the second statement as well.  
\end{proof}

\begin{lemma} Under our induction hypothesis we have $A^{3,i}(a) \in \Hom_{\mc{R}}(\lat_{i}, (\mf{m}_{\mc{R}}^{j} + I)\lat_3)$ for all $a \in \mc{A}$ and $i=1,2$.  Moreover, if there exists $a_0 \in \mc{A}$ so that $A^{1,2}(a_0) \neq 0$, then $A^{2,1}(a) \in \Hom_{\mc{R}}(\lat_1, (\mf{m}_{\mc{R}}^{j} + I)\lat_2)$ for all $a \in \mc{A}$.
\end{lemma}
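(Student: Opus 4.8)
The plan is to bootstrap from Lemma~\ref{lem:inductionfirststep}, which already records the sharpened bound (with $\mf{m}_{\mc{R}}^{j+1}$ in place of $\mf{m}_{\mc{R}}^{j}$) for $a$ lying in $\ker(\rho\otimes_{\mc{R}}\kappa_{\mc{R}})$, and to reduce the case of a general $a\in\mc{A}$ to that situation by inserting the block projectors. Concretely, the distinguished elements $a_1,a_2,a_3\in\mc{A}$ constructed above satisfy $\rho_{\widetilde{\mc{R}}}(a_i)=E_i$, and the essential identity is
\begin{equation*}
A^{i',j'}(a)=A^{i',j'}(a_{i'}\,a\,a_{j'})\qquad\text{for all }a\in\mc{A},
\end{equation*}
which holds because $\rho_{\widetilde{\mc{R}}}(a_{i'}aa_{j'})=E_{i'}\rho_{\widetilde{\mc{R}}}(a)E_{j'}$ and extracting the $(i',j')$-block of $E_{i'}\rho_{\widetilde{\mc{R}}}(a)E_{j'}$ simply returns the $(i',j')$-block of $\rho_{\widetilde{\mc{R}}}(a)$. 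One must check this is an equality in $\Hom_{\mc{R}}(\lat_{j'},\lat_{i'})$ after restricting to the $\mc{R}$-lattice $\lat=\lat_1\oplus\lat_2\oplus\lat_3$, not merely over $\widetilde{\mc{R}}$, but this is immediate from $\lat_{i'}=E_{i'}(\lat)$.

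First I would dispose of the blocks $(3,1)$ and $(3,2)$. Reducing modulo $\Rad(\widetilde{\mc{R}})$ and invoking \eqref{eqn:urbanstar}, the $(3,1)$- and $(3,2)$-blocks of $\ov{\rho}_{\widetilde{\mc{R}}}(a)$ vanish for every $a$, so $\ov{\rho}_{\widetilde{\mc{R}}}(a_3aa_1)=E_3\ov{\rho}_{\widetilde{\mc{R}}}(a)E_1=0$ and likewise $\ov{\rho}_{\widetilde{\mc{R}}}(a_3aa_2)=0$. Since $\mf{m}_{\mc{R}}=\Rad(\widetilde{\mc{R}})\cap\mc{R}$ and, by freeness of $\lat$ over $\mc{R}$, the restricted representation has entries in $\mc{R}$, one has $\ker\ov{\rho}_{\widetilde{\mc{R}}}=\ker(\rho\otimes_{\mc{R}}\kappa_{\mc{R}})$; hence $a_3aa_1$ and $a_3aa_2$ lie in $\ker(\rho\otimes_{\mc{R}}\kappa_{\mc{R}})$. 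Applying Lemma~\ref{lem:inductionfirststep} to these elements, together with the displayed identity, yields $A^{3,i}(a)\in\Hom_{\mc{R}}(\lat_i,(\mf{m}_{\mc{R}}^{j+1}+I)\lat_3)$ for $i=1,2$ and all $a\in\mc{A}$.

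For the $(2,1)$-block the element $a_2aa_1$ need not lie in $\ker(\rho\otimes_{\mc{R}}\kappa_{\mc{R}})$, since the residual $(2,1)$-block $\star_3$ of $\ov{\rho}_{\widetilde{\mc{R}}}$ is not a priori zero. The observation that saves the day is that the standing hypothesis $A^{1,2}(a_0)\neq 0$ for some $a_0\in\mc{A}$ forces $\star_3\equiv 0$: this is precisely the ``moreover'' clause of Lemma~\ref{lem:urbanstar}, obtained from $\star_1\star_3=0$, which gives $A^{2,1}(b)\in\Hom_{\mc{R}}(\lat_1,\mf{m}_{\mc{R}}\lat_2)$ for all $b$, i.e.\ $\ov{\rho}_{\widetilde{\mc{R}}}$ has vanishing $(2,1)$-block. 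Then $\ov{\rho}_{\widetilde{\mc{R}}}(a_2aa_1)=E_2\ov{\rho}_{\widetilde{\mc{R}}}(a)E_1=0$, so $a_2aa_1\in\ker(\rho\otimes_{\mc{R}}\kappa_{\mc{R}})$, and the second assertion of Lemma~\ref{lem:inductionfirststep} applied to $a_2aa_1$ together with $A^{2,1}(a)=A^{2,1}(a_2aa_1)$ gives $A^{2,1}(a)\in\Hom_{\mc{R}}(\lat_1,(\mf{m}_{\mc{R}}^{j+1}+I)\lat_2)$ for all $a\in\mc{A}$, completing the inductive step.

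The part I expect to be most delicate is not any single computation but the careful reconciliation of the several notions of residual representation in play --- the $\ov{\rho}_{\wp}$ over the $\kappa_{\wp}$, the collected $\ov{\rho}_{\widetilde{\mc{R}}}$ over $\widetilde{\mc{R}}/\Rad(\widetilde{\mc{R}})$, and $\rho\otimes_{\mc{R}}\kappa_{\mc{R}}$ over $\kappa_{\mc{R}}$ --- together with tracking that the block-triangular shape \eqref{eqn:urbanstar} persists through the change of basis diagonalizing $\rho_{\widetilde{\mc{R}}}(a_0)$, so that \eqref{eqn:urbanstar} and Lemma~\ref{lem:urbanstar} apply verbatim to the $E_i$ defined afterwards. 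Once that bookkeeping is pinned down the induction closes, and since $\bigcap_j(\mf{m}_{\mc{R}}^{j}+I)=I$ in $\mc{R}$, letting $j\to\infty$ yields $A^{3,i}(a)\in\Hom_{\mc{R}}(\lat_i,I\lat_3)$ (and $A^{2,1}(a)\in\Hom_{\mc{R}}(\lat_1,I\lat_2)$ under the extra hypothesis), which is exactly the vanishing modulo $I$ needed to extract the exact sequence of $\mc{A}$-modules asserted in Theorem~\ref{thm:lattices}.
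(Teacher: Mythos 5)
Your argument is correct, but it takes a genuinely different (and shorter) route than the paper. The paper handles the passage from kernel elements to general elements by showing that $a\mapsto A^{3,j}(a) \bmod (\mf{m}_{\mc{R}}^{j+1}+I)\lat_3$ factors through $\IM\rho\otimes\kappa_{\mc{R}}$ (this is where Lemma \ref{lem:inductionfirststep} enters), producing linear maps $\Phi_{3,1},\Phi_{3,2}$ that vanish on diagonal matrices and satisfy a twisted derivation identity coming from $A^{3,k}(ab)=\sum_i A^{3,i}(a)A^{i,k}(b)$; it then kills $\Phi_{3,k}$ by evaluating on elementary block matrices $\ov{B}$ and using $\ov{B}=\ov{B}\,\ov{\rho}(a_{j_0})$. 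You instead use the idempotents two-sidedly: $A^{i',j'}(a)=A^{i',j'}(a_{i'}aa_{j'})$, and $\rho_{\lat}(a_{i'}aa_{j'})=E_{i'}\rho_{\lat}(a)E_{j'}$ has a single potentially nonzero block, which already lands in $\mf{m}_{\mc{R}}\lat_{i'}$ by Lemma \ref{lem:urbanstar} (for the $(3,1)$ and $(3,2)$ blocks this is the unconditional part; for the $(2,1)$ block it is exactly the ``moreover'' clause, which is precisely where the hypothesis $A^{1,2}(a_0)\neq 0$ is needed), so $a_{i'}aa_{j'}\in\ker(\rho\otimes_{\mc{R}}\kappa_{\mc{R}})$ and Lemma \ref{lem:inductionfirststep} applies verbatim. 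Both arguments rest on the same inputs (Lemmas \ref{lem:surj}, \ref{lem:urbanstar} and \ref{lem:inductionfirststep}), so the sandwich buys a cleaner inductive step at no cost. Two small remarks. First, the bookkeeping you flag at the end can be avoided entirely: there is no need to pass through $\ov{\rho}_{\widetilde{\mc{R}}}$, $\Rad(\widetilde{\mc{R}})$ and equation (\ref{eqn:urbanstar}) to see that $a_3aa_1\in\ker(\rho\otimes_{\mc{R}}\kappa_{\mc{R}})$ --- Lemma \ref{lem:urbanstar} (or, for $j\geq 1$, the induction hypothesis itself, since $I\subseteq\mf{m}_{\mc{R}}$) is already phrased in terms of the blocks $A^{i,j}$ of $\rho_{\lat}$, which is exactly what is required. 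Second, the exponent in the statement should be read as $j+1$, as in Lemma \ref{lem:inductionfirststep} and in the target of $\Phi_{3,k}$; with $\mf{m}_{\mc{R}}^{j}$ the statement is just the induction hypothesis, and what you prove is the $j+1$ version needed to close the induction.
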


\begin{proof}  Consider $\IM \rho \otimes \kappa_{\mc{R}} \subset \Hom(\lat \otimes \kappa_{\mc{R}}, \lat \otimes \kappa_{\mc{R}})$.  We denote the projection of $\ov{\rho}(a)$ onto $\Hom(\lat_{i} \otimes \kappa_{\mc{R}}, \lat_{j} \otimes \kappa_{\mc{R}})$ by $\ov{A}^{j,i}$. Applying Lemma \ref{lem:urbanstar} we have a decomposition
\begin{equation*}
\IM \rho \otimes \kappa_{\mc{R}} = \sum_{1 \leq i \leq j \leq 3} (\IM \rho \otimes \kappa_{\mc{R}})_{i,j}
\end{equation*}
and so we can denote any element of $\IM \rho \otimes \kappa_{\mc{R}}$ by a matrix
\begin{equation*}
\begin{pmatrix} \ov{A}^{1,1} & \ov{A}^{1,2} & \ov{A}^{1,3} \\ \ov{A}^{2,1} & \ov{A}^{2,2} & \ov{A}^{2,3} \\
                0 & 0 & \ov{A}^{3,3} \end{pmatrix}
\end{equation*}
with $\ov{A}^{i,j} \in (\IM \rho \otimes \kappa_{\mc{R}})_{i,j}$.

Lemma \ref{lem:inductionfirststep} gives well-defined linear maps
\begin{equation*}
\Phi_{3,j}: \IM \rho \otimes \kappa_{\mc{R}} \rightarrow \Hom_{\mc{R}}(\lat_{j}, (\mf{m}_{\mc{R}}^{j} + I)\lat_3/ (\mf{m}_{\mc{R}}^{j+1} + I)\lat_3)
\end{equation*}
for $j=1,2$ induced by the map $a \mapsto A^{3,j}(a)$.  To finish the induction it is enough to show that $\Phi_{3,1}$ and $\Phi_{3,2}$ are both 0.  Observe that by definition $\Phi_{3,1}$ and $\Phi_{3,2}$ are zero on diagonal matrices.  The relations 
\begin{equation*}
A^{3,k}(ab) = \sum_{i=1}^{3} A^{3,i}(a)A^{i,k}(b)
\end{equation*}
gives the equations
\begin{equation*}
\Phi_{3,k}(\ov{B}\, \ov{C}) = \Phi_{3,1}(\ov{B}) \ov{C}^{1,k} + \Phi_{3,2}(\ov{B}) \ov{C}^{2,k} + \ov{B}^{3,3} \Phi_{3,k}(\ov{C})
\end{equation*}
for $j=1,2$.  It is enough to show that for each $1 \leq i_0 \leq j_0 \leq 3$,  $\Phi_{3,k}(\ov{B}) = 0$ for $\ov{B}$ defined by $\ov{B}^{i,j} = 0$ unless $i = i_0, j = j_0$.  We have
\begin{align*}
\Phi_{3,k}(\ov{B}) &= \Phi_{3,k}(\ov{B} \ov{\rho}(a_{j_0})) \\
    &= \Phi_{3,1}(\ov{B}) \ov{\rho}(a_{j_0})^{1,1} + \Phi_{3,2}(\ov{B}) \ov{\rho}(a_{j_0})^{2,j} + \ov{B}^{3,3} \Phi_{3,j}(\ov{\rho}(a_{j_0}))\\
    &=0
\end{align*}
where we have used that $\Phi_{3,j}(\ov{B}) = 0$ for $j=1,2$ and $\Phi_{3,j}(\ov{\rho}(a_{j_0})) = 0$ for $j=1,2$ and we recall that the $a_{i}$ were defined earlier such that $\rho(a_{i}) = E_{i}$. This completes the proof of the first statement and hence the induction.  The exact same argument gives the second statement as well.
\end{proof}

We summarize what we have proven thus far in the following proposition.  The only point that needs mentioning is the statement about the action on the quotient also requires Theorem 1 in \cite{carayol}.

\begin{proposition}\label{prop:lattices1} The lattice $(\lat_1 \otimes \mc{R}/I) \oplus (\lat_2 \otimes \mc{R}/I)$ is stable under the action of $\mc{A}$ and the action of $\mc{A}$ on the quotient $(\lat \otimes \mc{R}/I)/((\lat_1 \otimes \mc{R}/I) \oplus (\lat_2 \otimes \mc{R}/I))$ is isomorphic to $\rho_{3} \otimes \mc{R}/I$.  Moreover, either $\Hom_{\mc{R}/I}(\lat_1 \otimes \mc{R}/I, \lat_2 \otimes \mc{R}/I) = 0$ or $\Hom_{\mc{R}/I}(\lat_2 \otimes \mc{R}/I, \lat_1 \otimes \mc{R}/I) = 0$. 
\end{proposition}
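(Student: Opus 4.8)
The plan is to assemble the lemmas of this section into the three assertions, the only non-formal ingredient being the identification of the action on the quotient, for which Theorem~1 of \cite{carayol} must be invoked.

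\emph{Step 1: block-triangularity modulo $I$.} First I would observe that since $\mc{R}$ is finite over the discrete valuation ring $\ROI$ it is Noetherian, so $\mc{R}/I$ is a nonzero Noetherian local ring and Krull's intersection theorem gives $\bigcap_{j\geq 1}(\mf{m}_{\mc{R}}^{j}+I)=I$; because $\lat_3$ is $\mc{R}$-free this upgrades to $\bigcap_{j\geq 1}(\mf{m}_{\mc{R}}^{j}+I)\lat_3=I\lat_3$. The last lemma proved above says $A^{3,i}(a)\in\Hom_{\mc{R}}(\lat_i,(\mf{m}_{\mc{R}}^{j}+I)\lat_3)$ for every $j\geq 1$, every $a\in\mc{A}$ and $i=1,2$; intersecting over $j$ yields $A^{3,1}\equiv A^{3,2}\equiv 0\ (\modu I)$. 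Using that each $\lat_i$ is $\mc{R}$-free we have $\lat\otimes\mc{R}/I=\bigoplus_i(\lat_i\otimes\mc{R}/I)$, and the block form of $\rho_{\lat}$ reduces modulo $I$ to a matrix that is block upper triangular with respect to the decomposition $(\lat_1\otimes\mc{R}/I)\oplus(\lat_2\otimes\mc{R}/I)$ against $\lat_3\otimes\mc{R}/I$. Hence $(\lat_1\otimes\mc{R}/I)\oplus(\lat_2\otimes\mc{R}/I)$ is $\mc{A}$-stable, and the action on the quotient is $a\mapsto A^{3,3}(a)\bmod I$, which is multiplicative because the relation $A^{3,3}(ab)=\sum_i A^{3,i}(a)A^{i,3}(b)$ loses its $i=1,2$ terms modulo $I$.

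\emph{Step 2: identifying the quotient with $\rho_3\otimes\mc{R}/I$.} Next I would compare the two representations $a\mapsto A^{3,3}(a)\bmod I$ and $\rho_3\otimes\mc{R}/I$: both act on free rank-$n_3$ modules over $\mc{R}/I$, which is Henselian local as a quotient of the Henselian ring $\mc{R}$; by the lemma $\Tr(A^{3,3}(a))\equiv\Tr(\rho_3(a))\ (\modu I)$ they have the same trace; and each reduces modulo $\mf{m}_{\mc{R}}/I$ to $\ov{\rho}_3$, which is absolutely irreducible by hypothesis~(3). Theorem~1 of \cite{carayol} then forces the two to be conjugate, hence isomorphic as $\mc{A}$-modules, giving the claim about the quotient. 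This is the step I expect to require the most care, since $\mc{R}/I$ is not a field: one must check that the precise hypotheses of the cited ``trace determines the representation'' result — Henselian local coefficient ring, absolutely irreducible residual representation — are exactly the ones in force here.

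\emph{Step 3: the $\Hom$ dichotomy.} Finally, Lemma~\ref{lem:urbanstar} together with the induction carried out above gives that either $A^{1,2}(a)=0$ for all $a\in\mc{A}$, or else $A^{2,1}(a)\in\Hom_{\mc{R}}(\lat_1,(\mf{m}_{\mc{R}}^{j}+I)\lat_2)$ for all $j$, whence $A^{2,1}\equiv 0\ (\modu I)$ by the Krull argument of Step~1; in the first case $\lat_2\otimes\mc{R}/I$ is an $\mc{A}$-submodule of $(\lat_1\otimes\mc{R}/I)\oplus(\lat_2\otimes\mc{R}/I)$ with quotient $\lat_1\otimes\mc{R}/I$, and in the second case the roles of $\lat_1$ and $\lat_2$ are exchanged. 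In either situation, because $\ov{\rho}_1$ and $\ov{\rho}_2$ are absolutely irreducible and inequivalent by hypotheses~(3)--(4), reducing modulo $\mf{m}_{\mc{R}}$ and applying Krull's intersection theorem to the finitely generated $\mc{R}/I$-module of $\mc{A}$-equivariant $\mc{R}/I$-homomorphisms between the two pieces forces the relevant $\Hom$-module to vanish, which is the asserted dichotomy; everything outside Step~2 is formal manipulation of the block structure produced by the preceding lemmas.
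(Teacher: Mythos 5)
Your proposal matches the paper's own treatment: the paper's ``proof'' of Proposition~\ref{prop:lattices1} is the single remark that it summarizes the preceding lemmas, with Theorem~1 of \cite{carayol} invoked only to identify the quotient action with $\rho_3\otimes\mc{R}/I$, and your three steps supply exactly the details being summarized --- the Krull-intersection passage from $A^{3,i}(a)\in\Hom_{\mc{R}}(\lat_i,(\mf{m}_{\mc{R}}^{j}+I)\lat_3)$ for all $j$ to $A^{3,i}\equiv 0\ (\modu I)$, the Carayol trace argument over the Henselian local ring $\mc{R}/I$, and the dichotomy ``$A^{1,2}=0$ identically or $A^{2,1}\equiv 0\ (\modu I)$'' coming from Lemma~\ref{lem:urbanstar} and the induction. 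The closing sentence of your Step~3 is vaguer than the rest, but the dichotomy you actually establish there is precisely the content the statement is recording, so the argument is correct and follows the same route as the paper.
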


It now remains to consider the action of $\mc{A}$ on $(\lat_1 \otimes \mc{R}/I) \oplus (\lat_2 \otimes \mc{R}/I)$.  

\begin{lemma}\label{lem:lattices2}
\begin{enumerate}
\item The $\mc{R}$-modules $$\lat_1(\alpha_{i}) = E(\alpha_{i})\lat_1 = \ker(\rho_{\widetilde{\mc{R}}}(a_0) - \alpha_i \Id)\cap \lat_1$$ are mutually isomorphic to some module $\mathscr{T}_1$ for $1 \leq i \leq n_2$ and similarly the $\mc{R}$-modules $$\lat_2(\alpha_{i}) = E(\alpha_{i})\lat_2 = \ker(\rho_{\widetilde{\mc{R}}}(a_0) - \alpha_i \Id)\cap \lat_2$$ are mutually isomorphic to some module $\mathscr{T}_2$ for $n_1+1 \leq i \leq n_2$.  \\
\item As an $\mc{A}$-module, $\lat_{j} \otimes \mc{R}/I \cong \rho_{j} \otimes \mc{T}_{j}/I\mc{T}_{j}$ for $j=1,2$.
\end{enumerate}
\end{lemma}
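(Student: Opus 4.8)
The plan is to handle the two parts in order: part (1) is a short module-theoretic observation, and part (2) is an application of Carayol's theorem that identifies the $\mc{A}$-action on $\lat_{j}\otimes\mc{R}/I$ with $\rho_{j}$ modulo $I$.

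For part (1), first recall that $E_{j}=\sum_{i}E(\alpha_{i})$ with $i$ ranging over the indices of the $j$-th diagonal block, and that $E(\alpha_{i})=\rho_{\widetilde{\mc{R}}}(f_{i})$ for some $f_{i}\in\mc{A}$, so $E(\alpha_{i})\lat\subseteq\lat$ by $\mc{A}$-stability. Hence $\lat_{j}=E_{j}\lat=\bigoplus_{i}\lat_{j}(\alpha_{i})$, which exhibits each $\lat_{j}(\alpha_{i})$ as a direct summand of the free $\mc{R}$-module $\lat_{j}$; being finitely generated and projective over the local ring $\mc{R}$ it is free, and it has rank one since $\lat_{j}(\alpha_{i})\otimes_{\mc{R}}F_{\mc{R}}=F_{\mc{R}}e_{i}$. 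Thus all the $\lat_{j}(\alpha_{i})$ in a given block are isomorphic, and we let $\mathscr{T}_{j}$ be this common (free, rank one) module. Alternatively, and more in the spirit of \cite{urbanduke}, one can produce the isomorphisms directly: for $i,i'$ in the $j$-th block, absolute irreducibility of $\ov{\rho}_{j}$ supplies an $a\in\mc{A}$ whose diagonal block $\ov{A}^{j,j}(a)$ is the elementary matrix with a single $1$ in position $(i',i)$, and then $\rho_{\widetilde{\mc{R}}}(f_{i'}af_{i})$ restricts to an $\mc{R}$-linear map $\lat_{j}(\alpha_{i})\to\lat_{j}(\alpha_{i'})$ that is an isomorphism modulo $\mf{m}_{\mc{R}}$, hence an isomorphism by Nakayama together with a rank count after tensoring with $F_{\mc{R}}$.

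For part (2), since $\lat_{j}$ is free of rank $n_{j}$ we may regard $a\mapsto A^{j,j}(a)$ as a map $\mc{A}\to\M_{n_{j}}(\mc{R})$; it suffices to show that its reduction modulo $I$ is conjugate over $\mc{R}/I$ to $\rho_{j}$ reduced modulo $I$, for then, since $\mathscr{T}_{j}/I\mathscr{T}_{j}\cong\mc{R}/I$, we obtain $\lat_{j}\otimes\mc{R}/I\cong\rho_{j}\otimes_{\mc{R}}\mathscr{T}_{j}/I\mathscr{T}_{j}$ as $\mc{A}$-modules. The first step is to observe that $a\mapsto A^{j,j}(a)\bmod I$ is multiplicative: expanding $A^{j,j}(ab)=\sum_{\ell}A^{j,\ell}(a)A^{\ell,j}(b)$ and using that $A^{3,i}(a)\in\Hom_{\mc{R}}(\lat_{i},I\lat_{3})$ for $i=1,2$ (Lemma \ref{lem:urbanstar} and the induction following it, summarized in Proposition \ref{prop:lattices1}) together with the dichotomy at the end of Proposition \ref{prop:lattices1} --- either $A^{2,1}(a)\in\Hom_{\mc{R}}(\lat_{1},I\lat_{2})$ for all $a$ or $A^{1,2}(a)\in\Hom_{\mc{R}}(\lat_{2},I\lat_{1})$ for all $a$ --- one sees that every cross term in $A^{j,j}(ab)$ lands in $I$, so $A^{j,j}(ab)\equiv A^{j,j}(a)A^{j,j}(b)\pmod I$. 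Thus $A^{j,j}(\cdot)\bmod I$ is a genuine representation $\mc{A}\to\M_{n_{j}}(\mc{R}/I)$; by the block shape (\ref{eqn:urbanstar}) its residual representation is $\ov{\rho}_{j}$, which is absolutely irreducible by hypothesis, and by the trace lemma proved above its trace is congruent to that of $\rho_{j}$ modulo $I$. Carayol's theorem (\cite{carayol}, Theorem 1), applied over the Henselian local ring $\mc{R}/I$, now forces $A^{j,j}(\cdot)\bmod I$ and $\rho_{j}\bmod I$ to be conjugate over $\mc{R}/I$, which completes the proof.

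I expect the main obstacle to be the bookkeeping in part (2): pinning down exactly which off-diagonal blocks $A^{i,j}$ are pushed into $I$, checking that both alternatives of the Proposition \ref{prop:lattices1} dichotomy still annihilate all the relevant cross terms modulo $I$ so that multiplicativity mod $I$ really holds, and verifying that Carayol's theorem is genuinely applicable --- i.e., that $\mc{R}/I$ is Henselian local, that $\ov{\rho}_{j}$ is absolutely irreducible, and that, granting the multiplicativity just established, congruence of traces modulo $I$ suffices to force conjugacy over $\mc{R}/I$.
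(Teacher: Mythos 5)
Your second argument for part (1) is essentially the paper's: use absolute irreducibility of $\ov{\rho}_{j}$ to produce $\sigma_{i,i'}$ with $\ov{\rho}_{\widetilde{\mc{R}}}(\sigma_{i,i'})\ov{e}_{i}=\ov{e}_{i'}$, note that $\rho_{\widetilde{\mc{R}}}(f_{i'}\sigma_{i,i'}f_{i})$ carries $\lat_{j}(\alpha_{i})$ into $\lat_{j}(\alpha_{i'})$, and check the composite with the reverse map is an automorphism. The problem lies in your first argument for (1) and, more seriously, in the step of (2) where you replace $\mathscr{T}_{j}/I\mathscr{T}_{j}$ by $\mc{R}/I$ so that $A^{j,j}\bmod I$ becomes a matrix representation over $\mc{R}/I$ to which plain Carayol applies. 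Both rest on $\lat_{1},\lat_{2}$ being free over $\mc{R}$, so that $\lat_{j}(\alpha_{i})$, as a direct summand of a finite free module over a local ring, is free of rank one. You can formally quote the earlier lemma asserting $\lat_{i}$ is free of rank $n_{i}$ for all $i$, but its proof is only carried out for $i=3$, the block containing the cyclic vector $e_{n}$, where irreducibility of $\ov{\rho}_{3}$ forces $\lat_{3}\otimes\kappa_{\mc{R}}$ to be $n_{3}$-dimensional; for $i=1,2$ the generators $E_{i}\rho_{\widetilde{\mc{R}}}(a)e_{n}$ reduce to $\ov{A}^{i,3}(a)\ov{e}_{n}$, which need not span an $n_{i}$-dimensional space, so the cases are not ``identical.'' This is exactly why, in Urban's original, $\mc{T}$ enters as a possibly non-free rank-one lattice in $F_{\mc{R}}$ and why $\Fitt_{\T}(\mc{T}_{i})=0$ has to be recorded separately: your conclusion $\mathscr{T}_{j}\cong\mc{R}$ would give $\mc{T}_{2}/I\mc{T}_{2}\cong\T/I$ on the nose and make the Fitting-ideal step in the Selmer bound vacuous. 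Proving too much here is the warning sign.

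The paper's proof of part (2) is built to avoid this. It fixes an isomorphism $\lat_{j}\otimes\mc{R}/I\cong(\mc{T}_{j}/I\mc{T}_{j})^{n_{j}}$ using only part (1), so the $\mc{A}$-action is a map into $\M_{n_{j}}(\mc{E})$ with $\mc{E}=\End_{\mc{R}/I}(\mc{T}_{j}/I\mc{T}_{j})$ a noncommutative Artinian algebra; its trace lands in $\theta(\mc{R}/I)$ and equals $\theta(\Tr\rho_{j})$, and one invokes the \emph{generalization} of Carayol's theorem to such coefficient algebras (proved in Lemma 1.5 of Urban), whose conclusion is precisely $\lat_{j}\otimes\mc{R}/I\cong\rho_{j}\otimes\mc{T}_{j}/I\mc{T}_{j}$ rather than a conjugacy of matrix representations over $\mc{R}/I$. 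Your multiplicativity observation, that $A^{j,j}(ab)\equiv A^{j,j}(a)A^{j,j}(b)\ (\modu I)$ because $A^{3,i}\equiv 0\ (\modu I)$ and the dichotomy of Proposition \ref{prop:lattices1} kills $A^{1,2}A^{2,1}$, is correct and is genuinely needed to make the $\mc{A}$-action on $\lat_{j}\otimes\mc{R}/I$ well defined; keep that, but route the endgame through the generalized Carayol statement rather than through $\mathscr{T}_{j}/I\mathscr{T}_{j}\cong\mc{R}/I$.
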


\begin{proof}  We prove this theorem for $\lat_1$ as the argument for $\lat_2$ is exactly the same.  Recall that we let $f_{i} \in \mc{A}$ denote the element so that $\rho_{\widetilde{\mc{R}}}(f_{i}) = \diag(0,\dots, 0, 1, 0, \dots, 0)$ with a 1 in the $i$th place.  Given any $i,j \in \{ 1, \dots, n_1\}$, the irreducibility of $\ov{\rho}_1$ gives an element $\sigma_{i,j} \in \mc{A}$ so that $\rho_{\widetilde{\mc{R}}}(\sigma_{i,j}) \ov{e}_{i} = \ov{e}_{j}$.  Thus, $\rho_{\widetilde{\mc{R}}}(f_{j} \sigma_{i,j} f_{i})$ gives a morphism $\phi_{i,j}$ from $\lat_1(\alpha_{i})$ to $\lat_{1}(\alpha_{j})$. Our choice of $f_{i}$'s and $\sigma_{i,j}$ give that $\phi_{i,j} \circ \phi_{j,i}$ gives an automorphism of $\lat_{1}(\alpha_{i})$ and so $\phi_{i,j}$ is an isomorphism.  This gives the first result where we set $\mc{T}_1 = \lat_1(\alpha_1)$ and $\mc{T}_2 = \lat_2(\alpha_{n_1+1})$.  

Fix an isomorphism $\lat_1 \otimes \mc{R}/I \cong (\mc{T}_1/I \mc{T}_1)^{n_1}$.  Set $\mc{E} = \End_{\mc{R}/I}(\mc{T}_1/I \mc{T}_1)$ (a noncommutative Artinian algebra) and $\theta: \mc{R}/I \rightarrow \mc{E}$ the canonical algebra homomorphism.  The action of $\mc{A}$ on $\lat_1 \otimes \mc{R}/I$ gives a representation $\rho_1' in \M_{n_1}(\mc{E})$ so that $\Tr(\rho_1'(a)) \in \theta(\mc{R}/I)$ is defined for all $a \in \mc{A}$ and we have $\Tr(\rho_1'(a)) = \theta(\Tr(\rho_1(a)))$ for all $a \in \mc{A}$.  A generalization of Theorem 1.1.2 in \cite{carayol} then gives the result.  See the proof of Lemma 1.5 in \cite{urbanduke} for the proof of this generalization.
\end{proof}

\begin{proof} (of Theorem \ref{thm:lattices}) In light of Proposition \ref{prop:lattices1} and Lemma \ref{lem:lattices2}, it only remains to prove the last statement of the theorem.  Suppose some quotient of $\lat$ is isomorphic to $\ov{\rho}'$ with $\ov{\rho}'^{\semi} = \ov{\rho}_1 \oplus \ov{\rho}_2$.  Let $\lat'$ be the sublattice of $\lat$ that is stable under the action of $\mc{A}$ so that $\lat/\lat' \cong \ov{\rho}'$.  From our decomposition of $\lat$ we inherit a decomposition of $\lat'$ as $\lat' = \lat_1 ' \oplus \lat_2 ' \oplus \lat_3 '$.  Thus, we have 
\begin{equation*}
\lat/\lat' = \lat_1 /\lat_1 ' \oplus \lat_2/\lat_2 ' \oplus \lat_3/\lat_3 ' \cong \ov{\rho}'.
\end{equation*}
However, our assumption on $\ov{\rho}'^{\semi}$ gives that $\lat_3 = \lat_3 '$.  This, combined with the fact that $\lat$ is generated by $\lat_3$ over $\mc{A}$ gives $\lat = \lat'$, a contradiction.  Thus, no such quotient can exist.  It is useful to observe that what this is saying in terms of the matrices is that $A^{1,3}$ and $A^{2,3}$ cannot both be 0.
\end{proof}

\section{Selmer groups}\label{sec:selmer}
Let $K$ be a field and write $\gal{K}$ for $\Gal(\ov{K}/K)$. Let $M$ be a topological $\gal{K}$-module.  We write the cohomology group $\coh^1_{\cont}(\gal{K},M)$ as $\coh^1(K,M)$ where ``cont'' refers to continuous cocycles.  For a prime $\ell$, we write $D_{\ell}$ for the decomposition group at $\ell$ and identify it with $\gal{\rat_{\ell}}$. 

Let $E/\rat_{p}$ be a finite extension.  Let $\ROI$ be the ring of integers of $E$ and $\varpi$ a uniformizer.  Let $V$ be a finite dimensional Galois representation over $E$.  We will also find it convenient to write $\rho: \absgal \rightarrow \GL_{n}(E)$ to denote the Galois representation $V$ when $\dim_{E}(V)=n$.  We switch interchangably between these notations depending upon context.  Let $T \subseteq V$ be a Galois-stable $\ROI$-lattice, i.e., $T$ is stable under the action of $\absgal$ and $T \otimes_{\ROI} E \cong V$. Set $W = V/T$.

We write $\mathbb{B}_{\cris}$ for the ring of $p$-adic periods (\cite{fontaine}). Set
\begin{equation*}
D = (V \otimes_{\rat_{p}} \mathbb{B}_{\cris})^{D_{p}}
\end{equation*}
and
\begin{equation*}
\Cris(V) = \coh^0(\rat_{p}, V \otimes_{\rat_{p}} \mathbb{B}_{\cris}).
\end{equation*}
We say the representation $V$ is {\it crystalline} if $\dim_{\rat_{p}} V = \dim_{\rat_{p}} \Cris(V)$.  Let $\Fil^{i} D$ be a decreasing filtration of $D$.  If $V$ is crystalline, we say $V$ is {\it short} if $\Fil^0 D = D$, $\Fil^{p} D = 0$, and if whenever $V'$ is a nonzero quotient of $V$, then $V' \otimes_{\rat_{p}} \rat_{p}(p-1)$ is ramified.  Note that $\rat_{p}(n)$ is the 1-dimensional space over $\rat_{p}$ on which $\absgal$ acts via the $n$th power of the $p$-adic cyclotomic character. 

The local Selmer groups are defined as follows.  Set
\begin{equation*}
\coh_{f}^1(\rat_{\ell}, V) = \left\{\begin{array}{ll} \coh^1_{\ur}(\rat_{\ell}, V) & \ell \neq p \\ \ker(\coh^1(\rat_{p},V) \rightarrow \coh^1(\rat_{p}, V \otimes_{\rat_{p}}\mathbb{B}_{\cris})) & \ell =p \end{array} \right.
\end{equation*}
where 
\begin{equation*}
\coh^1_{\ur}(\rat_{\ell}, M) = \ker(\coh^1(\rat_{\ell},M) \rightarrow \coh^1(I_{\ell}, M))
\end{equation*}
for any $D_{\ell}$-module $M$ where $I_{\ell}$ is the inertia group at $\ell$.  With $W$ as above we define
$\coh_{f}^1(\rat_{\ell},W)$ to be the image of $\coh^1_{f}(\rat_{\ell},V)$ under the natural map $\coh^1(\rat_{\ell}, V) \rightarrow \coh^1(\rat_{\ell}, W)$. 

\begin{definition} The Selmer group of $W$ is given by
\begin{equation*}
\Sel(W)  = \ker \left( \coh^1(\rat, W) \rightarrow \bigoplus_{\ell} \frac{\coh^1(\rat_{\ell}, W)}{\coh_{f}^1(\rat_{\ell}, W)}\right),
\end{equation*}
i.e., it is the cocycles $\mathfrak{c} \in \coh^1(\rat,W)$ that lie in $\coh^1_{f}(\rat_{\ell},W)$ when restricted to $D_{\ell}$.
\end{definition}

Before we can define the degree $n$ Selmer groups of interest we must recall the notion of extensions of modules and the relationship between these extensions and the first cohomology group. An extension of $M$ by $N$ is a short exact sequence 

\begin{figure}[h!]
\centerline{\xymatrix{ 0 \ar[r] & N \ar[r]^{\alpha} & X \ar[r]^{\beta} & M \ar[r] & 0}}
\end{figure}

\noindent where $X$ is a $R[G]$-module and $\alpha$ and $\beta$ are $R[G]$-homomorphisms.  We sometimes refer to such an extension as the extension $X$. We say two extensions $X$ and $Y$ are equivalent if there is a $R[G]$-isomorphism $\gamma$ making the following diagram commute\\

\begin{figure}[h!]
\centerline{\xymatrix{ 0 \ar[r] & N \ar[r]^{\alpha_{X}} \ar[d]_{\id_{N}}& X \ar[r]^{\beta_{X}} \ar[d]_{\gamma} & M \ar[r] \ar[d]_{\id_{M}}& 0 \\
            0 \ar[r] & N \ar[r]^{\alpha_{Y}}& Y \ar[r]^{\beta_{Y}} & M \ar[r]& 0.}}
\end{figure}

\noindent Let $\Ext^1_{R[G]}(M,N)$ denote the set of equivalence classes of $R[G]$-extensions of $M$ by $N$ which split as extensions of $R$-modules, i.e., if $X$ is the extension of $M$ by $N$, then $X \cong M \oplus N$ as $R$-modules. 

The following result will allow us to appropriately define the degree $n$ Selmer group.  The case where $M=N$ is given as Proposition 4 in \cite{mfflt}.  

\begin{theorem} (Theorem 9.2, \cite{jbsp4}) Let $M$ and $N$ be $R[G]$-modules.  There is a one-one correspondence between the sets $\coh^1(G, \Hom_{R}(M,N))$ and $\Ext^1_{R[G]}(M,N)$.
\end{theorem}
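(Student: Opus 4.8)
The plan is to spell out the classical dictionary between $1$-cocycles valued in $\Hom_R(M,N)$ and extensions of $M$ by $N$ that split over $R$; the hypothesis that we restrict to $R$-split extensions is exactly what lets the argument go through with no projectivity assumption on $M$. This is the argument that establishes the case $M = N$ recalled above, now run for a general pair $(M,N)$, so I will only indicate the two maps and the points where the cocycle condition enters.

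First I would define the map from cohomology to extensions. Given a cocycle $c \colon G \to \Hom_R(M,N)$ — where $G$ acts on $\Hom_R(M,N)$ by $(g \cdot \phi)(m) = g\bigl(\phi(g^{-1}m)\bigr)$ — I set $X_c = N \oplus M$ as an $R$-module and equip it with the $G$-action
\[
g \cdot (n,m) = \bigl( g n + c(g)(g m),\, g m \bigr).
\]
A direct check shows that associativity of this action is equivalent to the cocycle identity $c(gh) = g \cdot c(h) + c(g)$, so $X_c$ is an $R[G]$-module; the maps $n \mapsto (n,0)$ and $(n,m) \mapsto m$ exhibit it as an extension of $M$ by $N$, manifestly split over $R$. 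If $c$ is replaced by $c + \partial \psi$ with $\psi \in \Hom_R(M,N)$, then translation by $\psi$ in the $N$-coordinate is an $R[G]$-isomorphism of extensions, so this assignment descends to a well-defined map $\coh^1(G,\Hom_R(M,N)) \to \Ext^1_{R[G]}(M,N)$.

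Next I would construct the reverse map. Given an extension $0 \to N \xrightarrow{\alpha} X \xrightarrow{\beta} M \to 0$ together with an $R$-linear section $s \colon M \to X$ of $\beta$, I set $c_s(g)(m) = g \cdot s(g^{-1}m) - s(m)$. Because $\beta$ is $G$-equivariant this element lies in $\alpha(N)$, so $c_s(g)$ may be regarded as an element of $\Hom_R(M,N)$; a short computation (formally the reverse of the one above) shows $c_s$ is a cocycle. Replacing $s$ by another $R$-section changes $c_s$ by a coboundary, and an equivalence of extensions carries one section to another, so the class $[c_s] \in \coh^1(G,\Hom_R(M,N))$ depends only on the equivalence class of the extension.

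Finally I would check that the two maps are mutually inverse: feeding $c$ through the first construction and then using the tautological section $m \mapsto (0,m)$ of $X_c$ returns $c$ on the nose, while for an $R$-split extension $X$ with section $s$ the map $(n,m) \mapsto \alpha(n) + s(m)$ is an equivalence $X_{c_s} \xrightarrow{\ \sim\ } X$. All of this is elementary module-theoretic bookkeeping; the only step that requires genuine care is fixing the $G$-action on $\Hom_R(M,N)$ at the start and applying it consistently, since that convention dictates every sign in the argument — and that, rather than any conceptual point, is where I expect the main effort to go.
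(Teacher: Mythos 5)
Your proposal is correct and is the standard dictionary between $R$-split extensions and $1$-cocycles; in particular your reverse map $c_s(g)(m) = g\cdot s(g^{-1}m) - s(m)$ is exactly the formula $\mf{c}_{g}(m) = \alpha^{-1}(\rho(g)s_{X}(\rho_{M}(g^{-1})m) - s_{X}(m))$ that the paper records immediately after the theorem statement. The paper itself only cites the result (Theorem 9.2 of the reference, generalizing the case $M=N$ from Washington's article) and does not reproduce the verification, but what you write is the intended argument.
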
 

The map from $\Ext^1_{R[G]}(M,N)$ to $\coh^1(G, \Hom_{R}(M,N))$ is given as follows.  Let\\

\begin{figure}[h!]
\centerline{\xymatrix{ 0 \ar[r] & N \ar[r]^{\alpha} & X \ar[r]^{\beta} & M \ar[r] \ar@/_1pc/[l]_{s_{X}}& 0}}
\end{figure}

\noindent be an extension with $s_{X}$ a $R$-section of $X$.  This extension is mapped to the cohomology class $g \mapsto \mf{c}_{g}$ where $\mf{c}_{g}: M \rightarrow N$ is defined by
\begin{equation*}
\mf{c}_{g}(m) = \alpha^{-1}(\rho(g)s_{X}(\rho_{M}(g^{-1})m) - s_{X}(m))
\end{equation*}
where $\rho$ denotes the $G$-action on $X$ and $\rho_{M}$ the $G$-action on $M$.

As it will be useful later, we briefly consider the case where $N= N_1 \oplus N_2$. In this case we have that $$\Ext_{R[G]}^1(M,N) \cong \prod_{i=1}^{2} \Ext(M,N_{i})$$ and 
\begin{align*}
\coh^1(G, \Hom_{R}(M,N)) &\cong \coh^1(G, \Hom_{R}(M,N_1) \oplus \Hom_{R}(M,N_2)) \\
    &\cong \coh^1(G, \Hom_{R}(M,N_1)) \oplus \coh^1(G, \Hom_{R}(M,N_2)).
\end{align*}
Thus, in this case given an extension $X \in \Ext^1_{R[G]}(M,N)$, we obtain cohomology classes $\mf{c}_{1} \in \coh^1(G, \Hom_{R}(M,N_1))$ and $\mf{c}_{2} \in \coh^1(G, \Hom_{R}(M, N_2))$.

Let $W[n]$ be the $\ROI$-submodule of $W$ consisting of elements killed by $\varpi^{n}$.  The previous theorem gives a bijection between $\Ext_{(\ROI/\varpi^{n})[D_{p}]}^1(\ROI/\varpi^{n}, W[n])$ and $\coh^1(D_{p}, W[n])$.  For $\ell \neq p$, we define the local degree n Selmer groups by $\coh^1_{f}(\rat_{\ell}, W[n]) = \coh_{\ur}^1(\rat, W[n])$.  At the prime $p$ we define the local degree n Selmer group to be the subset of classes of extensions of $D_{p}$-modules\\

\begin{figure}[h!]
\centerline{\xymatrix{0 \ar[r] & W[n] \ar[r]&X \ar[r]& \ROI/\varpi^{n} \ar[r]&0}}
\end{figure}

\noindent where $X$ lies in the essential image of the functor $\mathbb{V}$ defined in $\S$ 1.1 of \cite{dfg}.  The precise definition of $\mathbb{V}$ is technical and is not needed here.  We content ourselves with stating that this essential image is stable under direct sums, subobjects, and quotients (\cite{dfg}, $\S$ 2.1).  For our purposes the following two propositions are what is needed.

\begin{proposition}\label{prop:crystalline} (\cite{dfg}, p. 670) If $V$ is a short crystalline representation at $p$, $T$ a $D_{p}$-stable lattice, and $X$ a subquotient of $T/\varpi^{n}$ that gives an extension of $D_{p}$-modules as above, then the class of this extension is in $\coh_{f}^1(\rat_{p}, W[n])$. 
\end{proposition}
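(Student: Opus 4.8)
The plan is to deduce this directly from the stability properties of the essential image of the functor $\mathbb{V}$ of \cite{dfg}, \S 1.1, together with the definition of the local degree $n$ Selmer group at $p$ recalled above; the statement itself is \cite{dfg}, p.~670, so in practice one simply cites it, but the reduction below records why it holds. First I would observe that, because $V$ is short crystalline at $p$, the integral Fontaine--Laffaille theory on which the construction of $\mathbb{V}$ rests shows not only that $V$ lies in the essential image of $\mathbb{V}$ but that the $D_{p}$-stable lattice $T$ does as well; the hypotheses $\Fil^{0}D = D$ and $\Fil^{p}D = 0$ are exactly what make this work at the level of lattices rather than merely rational representations. Reducing modulo $\varpi^{n}$ and using that $\mathbb{V}$ is compatible with such reductions, I would conclude that $T/\varpi^{n}$ lies in the essential image of $\mathbb{V}$ for every $n \geq 1$.

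Next, since $X$ is by hypothesis a subquotient of $T/\varpi^{n}$ as a $D_{p}$-module, I would invoke the fact recalled above that the essential image of $\mathbb{V}$ is stable under subobjects and quotients (\cite{dfg}, \S 2.1) to conclude that $X$ itself lies in the essential image of $\mathbb{V}$. At that point the extension
\begin{equation*}
0 \longrightarrow W[n] \longrightarrow X \longrightarrow \ROI/\varpi^{n} \longrightarrow 0
\end{equation*}
satisfies precisely the condition defining the local degree $n$ Selmer group at $p$, so its class lies in $\coh_{f}^{1}(\rat_{p}, W[n])$, which finishes the argument.

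The only substantive point, and hence the expected main obstacle, is the first step: verifying that $T/\varpi^{n}$ genuinely lies in the essential image of $\mathbb{V}$. This is where the ``short crystalline'' hypothesis is used in an essential way and where one must appeal to the $p$-torsion Fontaine--Laffaille comparison theorems of \cite{dfg}; everything after that is formal manipulation with subquotients and unwinding the definition of the Selmer condition.
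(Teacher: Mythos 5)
Your argument is correct and is exactly the reduction the paper intends: the paper offers no proof of its own beyond the citation to \cite{dfg}, p.~670, and relies on precisely the two facts you use, namely that shortness places $T/\varpi^{n}$ in the essential image of $\mathbb{V}$ via integral Fontaine--Laffaille theory and that this essential image is stable under subquotients, after which the conclusion is immediate from the definition of $\coh_{f}^{1}(\rat_{p}, W[n])$. No gaps.
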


\begin{proposition}(\cite{dfg}, Proposition 2.2)\label{thm:diamondflachguo} The
natural isomorphism
\begin{equation*}
\varinjlim_{n} \coh^1(\rat_{\ell}, W[n]) \cong
\coh^1(\rat_{\ell}, W)
\end{equation*}
induces isomorphisms
\begin{equation*}
\varinjlim_{n} \coh_{\ur}^1(\rat_{\ell}, W[n]) \cong
\coh_{\ur}^1(\rat_{\ell},W)
\end{equation*}
and
\begin{equation*}
\varinjlim_{n} \coh_{f}^1(\rat_{p}, W[n]) \cong
\coh_{f}^1(\rat_{p}, W).
\end{equation*}
\end{proposition}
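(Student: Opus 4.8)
The plan is to prove the two displayed isomorphisms separately; both descend from a single mechanism, namely that continuous cochain cohomology of a profinite group with coefficients in a discrete module commutes with filtered colimits, together with the exactness of filtered colimits in the category of abelian groups. Since $V$ is finite dimensional over $E$ and $T$ is a lattice, each $W[n]\cong T/\varpi^{n}T$ is finite, and $W=\varinjlim_{n}W[n]$ as a discrete $\absgal$-module, the transition maps being the inclusions $W[n]\hookrightarrow W[n+1]$. For any profinite group $G$, a continuous cochain $G^{i}\to W$ has compact source and discrete target, hence finite image, so it factors through some $W[n]$; thus $C^{i}_{\cont}(G,W)=\varinjlim_{n}C^{i}_{\cont}(G,W[n])$ as complexes, and taking cohomology (which commutes with filtered colimits) gives $\coh^{i}(G,W)=\varinjlim_{n}\coh^{i}(G,W[n])$. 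This is the ``natural isomorphism'' of the statement, applied with $G=\gal{\rat_{\ell}}$; I would note that it holds equally with $G$ replaced by the inertia group $I_{\ell}$.

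For the unramified assertion, recall $\coh^{1}_{\ur}(\rat_{\ell},-)=\ker\big(\coh^{1}(\gal{\rat_{\ell}},-)\to\coh^{1}(I_{\ell},-)\big)$, the restriction map, and that the transition maps $W[n]\to W[n+1]$ commute with restriction to $I_{\ell}$. Hence $n\mapsto\coh^{1}_{\ur}(\rat_{\ell},W[n])$ is the kernel of a morphism of direct systems whose colimit, by the previous paragraph, is the restriction map $\coh^{1}(\rat_{\ell},W)\to\coh^{1}(I_{\ell},W)$. Because filtered colimits are exact they commute with kernels, so $\varinjlim_{n}\coh^{1}_{\ur}(\rat_{\ell},W[n])\cong\coh^{1}_{\ur}(\rat_{\ell},W)$, which is the first claim.

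The $p$-adic assertion is the main obstacle, since $\coh^{1}_{f}(\rat_{p},W[n])$ is not defined as a kernel but as the set of classes of extensions of $\ROI/\varpi^{n}$ by $W[n]$ lying in the essential image of the functor $\mathbb{V}$, while $\coh^{1}_{f}(\rat_{p},W)$ is the image of the Bloch--Kato group $\coh^{1}_{f}(\rat_{p},V)=\ker(\coh^{1}(\rat_{p},V)\to\coh^{1}(\rat_{p},V\otimes_{\rat_{p}}\mathbb{B}_{\cris}))$. Two points must be checked. First, the subgroups $\coh^{1}_{f}(\rat_{p},W[n])$ are compatible with the transition maps: pushing an extension of $\ROI/\varpi^{n}$ by $W[n]$ forward along $W[n]\hookrightarrow W[n+1]$ yields an extension that is a quotient of the direct sum of the original extension with $W[n+1]\cong T/\varpi^{n+1}T$; since $V$ is short crystalline, $T/\varpi^{m}T$ lies in the essential image of $\mathbb{V}$ for every $m$, and this essential image is stable under direct sums and quotients (\cite{dfg}, \S 2.1), so the pushforward again lies in $\coh^{1}_{f}(\rat_{p},W[n+1])$. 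Thus the $\coh^{1}_{f}(\rat_{p},W[n])$ form a sub-direct-system, and their colimit maps into $\coh^{1}(\rat_{p},W)$. Second, this colimit equals $\coh^{1}_{f}(\rat_{p},W)$: for the inclusion into $\coh^{1}_{f}(\rat_{p},W)$, an extension in the essential image of $\mathbb{V}$ is, after passing to the limit over $n$, the reduction of a short crystalline extension of $\ROI$-lattices whose generic fiber is an extension of $E$ by $V$ contributing a class in $\coh^{1}_{f}(\rat_{p},V)$ mapping to it; for the reverse inclusion, a class in $\coh^{1}_{f}(\rat_{p},W)$ lifts to $\coh^{1}_{f}(\rat_{p},V)$, which is realized by a short crystalline extension of $E$ by $V$, and choosing a stable lattice and reducing modulo $\varpi^{n}$ puts the reduction in $\coh^{1}_{f}(\rat_{p},W[n])$ by Proposition~\ref{prop:crystalline}; since $\coh^{1}(\rat_{p},W)$ is $\varpi$-power torsion, the original class is the image of its level-$n$ representative once $n$ is large, hence lies in the colimit.

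The technical heart — that ``short crystalline'' is preserved under the extensions by unramified/trivial representations that arise here, and that reduction modulo $\varpi^{n}$ of a stable lattice in a short crystalline representation genuinely lands in the essential image of $\mathbb{V}$, with $\mathbb{V}$ inducing an equivalence onto that image compatibly with the inverse limit over $n$ — is exactly the Fontaine--Laffaille input recorded in \cite{dfg}, \S\S 1.1--2.1, and I would cite it rather than reprove it. Granting those facts, the first three paragraphs assemble into the two stated isomorphisms.
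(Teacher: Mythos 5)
The paper gives no argument for this proposition at all: it is imported verbatim from Diamond--Flach--Guo (their Proposition 2.2), so the only ``proof'' in the paper is the citation. Your sketch is a correct reconstruction of the standard argument behind that reference, and it is organized the right way: the identification $\varinjlim_{n}\coh^{1}(G,W[n])\cong\coh^{1}(G,W)$ via continuous cochains factoring through a finite stage, the unramified case by exactness of filtered colimits applied to the kernel of restriction to $I_{\ell}$ (noting, as you do, that the cochain argument applies to $I_{\ell}$ as well), and the $p$-adic case by compatibility of the $\mathbb{V}$-condition with the transition maps plus Proposition~\ref{prop:crystalline} for the reverse inclusion. The places where you defer to \cite{dfg} are exactly the places where a self-contained proof would have to develop the Fontaine--Laffaille functor, which is also what the paper does wholesale, so I do not count that as a gap. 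One point of bookkeeping deserves more care than your phrasing suggests: the transition map $\coh^{1}(\rat_{p},W[n])\to\coh^{1}(\rat_{p},W[n+1])$ does not literally send an extension of $\ROI/\varpi^{n}$ by $W[n]$ to an extension with the same quotient $\ROI/\varpi^{n}$ --- the Ext-group at level $n+1$ classifies extensions of $\ROI/\varpi^{n+1}$ by $W[n+1]$ --- so the ``pushforward is a quotient of a direct sum'' argument needs to be run on the corresponding subquotients of $T/\varpi^{n+1}T$ (or one must track the change of coefficient ring explicitly); this is precisely how \cite{dfg} set it up, and once phrased that way the stability of the essential image of $\mathbb{V}$ under direct sums, subobjects, and quotients closes the argument as you indicate.
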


Let $M$ be a $\inte_{p}$-module. We denote the Pontryagin dual $\Hom_{\cont}(M, \rat_{p}/\inte_{p})$ of $M$ by $M^{\vee}$.  In particular, we denote the dual of the Selmer group $\Sel(W)$ by $S(W)$ to ease the notation.

We close this section with the following results on $S(W)$. 

\begin{lemma} (\cite{klosin} Lemma 9.4) $S(W)$ is a finitely generated $\ROI$-module.
\end{lemma}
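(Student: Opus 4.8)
The plan is to show that the Selmer group $\Sel(W)$ is a \emph{cofinitely generated} $\ROI$-module, i.e.\ that it admits an $\ROI$-embedding into $(E/\ROI)^{d}$ for some finite $d$; since the Pontryagin dual of such an embedding is a surjection onto $S(W)$ from a module that is finitely generated over $\ROI$ (the dual of $(E/\ROI)^{d}$ being so), this yields the claim. Equivalently, it suffices to show that $\Sel(W)$ is $\varpi$-power torsion with $\Sel(W)[\varpi]$ finite and then invoke the structure theory of cofinitely generated modules over the discrete valuation ring $\ROI$.

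First I would fix a finite set $\Sigma$ of places of $\rat$ containing $p$, $\infty$, and every prime at which $W$ (equivalently $V$, or $\rho$) is ramified; this is finite because $V$ is a finite-dimensional $p$-adic representation arising from the constructions above. Let $\rat_{\Sigma}$ be the maximal extension of $\rat$ unramified outside $\Sigma$, so $W$ is naturally a discrete $\Gal(\rat_{\Sigma}/\rat)$-module. The local condition at $\ell\neq p$ in the definition of $\Sel(W)$ forces every class $\mf{c}\in\Sel(W)$ to be unramified at each $\ell\notin\Sigma$. A standard inflation--restriction argument then gives $\Sel(W)\subseteq\coh^1(\Gal(\rat_{\Sigma}/\rat),W)$: in the five-term exact sequence attached to $1\to\Gal(\ov{\rat}/\rat_{\Sigma})\to\absgal\to\Gal(\rat_{\Sigma}/\rat)\to 1$, the restriction of $\mf{c}$ to $\Gal(\ov{\rat}/\rat_{\Sigma})$ is a continuous homomorphism into $W$ (the action being trivial, since $W$ splits over $\rat_{\Sigma}$) whose cut-out extension $L$ is unramified over $\rat_{\Sigma}$ at all primes above $\ell\notin\Sigma$; hence $L/\rat$ is unramified outside $\Sigma$, so $L\subseteq\rat_{\Sigma}$, the restriction vanishes, and $\mf{c}$ is inflated from $\Gal(\rat_{\Sigma}/\rat)$-cohomology.

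Next I would bound $\coh^1(\Gal(\rat_{\Sigma}/\rat),W)$. For each $n$ the module $W[\varpi^{n}]$ is a finite $\Gal(\rat_{\Sigma}/\rat)$-module, and the classical finiteness theorem for Galois cohomology of a number field with restricted ramification gives that $\coh^1(\Gal(\rat_{\Sigma}/\rat),W[\varpi^{n}])$ is finite. Taking $n=1$ and using the long exact sequence attached to $0\to W[\varpi]\to W\xrightarrow{\varpi}W\to 0$ produces an injection $\coh^1(\Gal(\rat_{\Sigma}/\rat),W)[\varpi]\hookrightarrow\coh^1(\Gal(\rat_{\Sigma}/\rat),W[\varpi])$, so $\coh^1(\Gal(\rat_{\Sigma}/\rat),W)[\varpi]$ is finite. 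Since $W=\varinjlim_{n}W[\varpi^{n}]$, the group $\coh^1(\Gal(\rat_{\Sigma}/\rat),W)$ is $\varpi$-power torsion, and being $\varpi$-power torsion with finite $\varpi$-torsion it is cofinitely generated over $\ROI$. Hence its submodule $\Sel(W)$ is cofinitely generated over $\ROI$, and $S(W)=\Sel(W)^{\vee}$ is a finitely generated $\ROI$-module.

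The one point requiring genuine care is the reduction of the global Selmer condition to cohomology with restricted ramification (the inflation--restriction step), together with the correct invocation of finiteness of $\coh^1(\Gal(\rat_{\Sigma}/\rat),W[\varpi^{n}])$; the rest is formal manipulation with the structure theory of $\ROI$-modules. This is exactly the argument of \cite{klosin}, Lemma 9.4, which we have merely recalled here.
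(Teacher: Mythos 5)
Your argument is correct and is exactly the standard one that the paper delegates to \cite{klosin}, Lemma 9.4 without reproducing it: reduce to cohomology with ramification restricted to a finite set $\Sigma$, invoke finiteness of $\coh^1(\Gal(\rat_{\Sigma}/\rat),W[\varpi])$, and conclude cofinite generation of $\Sel(W)$, hence finite generation of its Pontryagin dual. One harmless slip: the long exact sequence for $0\to W[\varpi]\to W\xrightarrow{\varpi}W\to 0$ gives a \emph{surjection} of $\coh^1(\Gal(\rat_{\Sigma}/\rat),W[\varpi])$ onto $\coh^1(\Gal(\rat_{\Sigma}/\rat),W)[\varpi]$ rather than an injection in the direction you wrote, but finiteness of the latter follows just as well.
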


\begin{lemma} (\cite{klosin} Lemma 9.5) If the modulo $\varpi$ reduction $\ov{\rho}$ of $\rho$ is absolutely irreducible, then the length of $S(W)$ as an $\ROI$-module is independent of the choice of the lattice $T$.
\end{lemma}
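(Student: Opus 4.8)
The plan is to connect any two Galois-stable lattices in $V$ by a short chain of lattices whose consecutive terms are as close as possible, and then to use the absolute irreducibility of $\ov{\rho}$ to identify exactly what such a minimal step can be.

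First I would reduce to the case of two Galois-stable $\ROI$-lattices $T' \subseteq T$ with $\varpi T \subseteq T'$. Given arbitrary Galois-stable $\ROI$-lattices $T_1, T_2 \subseteq V$, replacing $T_2$ by a scalar multiple $\varpi^{j} T_2$ does not change $V/T_2$ up to $\absgal$-isomorphism (multiplication by $\varpi^{j}$ on $V$ is $\absgal$-equivariant and invertible), so we may assume $\varpi^{m} T_1 \subseteq T_2 \subseteq T_1$ for some $m \geq 0$. Setting $L_i = T_2 + \varpi^{i} T_1$ produces a chain $T_1 = L_0 \supseteq L_1 \supseteq \cdots \supseteq L_m = T_2$ of Galois-stable lattices with $\varpi L_i \subseteq L_{i+1} \subseteq L_i$, so by induction on the chain it suffices to compare $\length_{\ROI} S(V/L_i)$ and $\length_{\ROI} S(V/L_{i+1})$; that is, to treat a single step $\varpi T \subseteq T' \subseteq T$.

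The key step is then the following dichotomy. The quotient $T'/\varpi T$ is a $\absgal$-submodule of $T/\varpi T$, and since $\ov{\rho}$ is absolutely irreducible, the reduction of every Galois-stable lattice (in particular $T/\varpi T$) is isomorphic to $\ov{\rho}$ by Brauer--Nesbitt, hence irreducible; therefore $T'/\varpi T$ is either $0$ or all of $T/\varpi T$, i.e. $T' = \varpi T$ or $T' = T$. When $T' = T$ there is nothing to prove, and when $T' = \varpi T$, multiplication by $\varpi$ on $V$ is a $\absgal$-equivariant isomorphism carrying $T$ onto $T'$, hence it induces a $\absgal$-isomorphism $\phi\colon W = V/T \xrightarrow{\ \sim\ } V/\varpi T = W'$.

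Finally I would verify that $\phi$ transports the Selmer data, so that it induces an isomorphism $\Sel(W) \cong \Sel(W')$, hence $S(W) \cong S(W')$, which gives equality of $\ROI$-lengths. The square relating $\varpi\colon V \to V$, the two quotient maps $V \to W$ and $V \to W'$, and $\phi$ commutes; since $\varpi$ acts on each $E$-vector space $\coh^1(\rat_\ell, V)$ as an automorphism preserving the $E$-subspace $\coh^1_f(\rat_\ell, V)$, chasing this square shows $\phi_{\ast}\big(\coh^1_f(\rat_\ell, W)\big) = \coh^1_f(\rat_\ell, W')$ for every $\ell$, including $\ell = p$, so $\phi_{\ast}$ restricts to an isomorphism of Selmer groups. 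The main obstacle is precisely this last transport-of-structure check at $p$, where the local condition is defined only as the image of $\coh^1_f(\rat_p, V)$ and one must be careful that the identification is compatible with the change of lattice; everything else is formal once the lattice chain is in place and the irreducibility of $\ov{\rho}$ has been used to collapse the possible steps.
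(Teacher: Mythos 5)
The paper offers no proof of this lemma; it is quoted verbatim from \cite{klosin} (Lemma 9.5), whose argument is the standard one: absolute irreducibility of $\ov{\rho}$ forces any two $\absgal$-stable lattices in $V$ to be homothetic, and homothetic lattices give $\absgal$-isomorphic quotients $W$ with matching local conditions. Your proposal reconstructs exactly this argument correctly and completely — the chain $L_i = T_2 + \varpi^{i}T_1$, the dichotomy $L_{i+1} = L_i$ or $L_{i+1} = \varpi L_i$ coming from irreducibility of the reduction, and the transport of the Selmer structure under multiplication by $\varpi$ (which is legitimate since $\varpi$ acts as a $\absgal$-equivariant automorphism of $V$ preserving each $\coh^1_f(\rat_\ell, V)$) — so there is nothing to add.
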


\section{A lower bound on the Selmer group}\label{sec:finalresult} 

Let $E/\rat_{p}$ be a finite extension as before large enough so that our results from $\S$ \ref{sec:congruences} are defined over $E$.  We enlarge $E$ when necessary so that the appropriate Galois representations in this section are defined over $E$ as well.  Let $\ROI$ be the ring of integers of $E$, $\varpi$ the uniformizer, $\p = (\varpi)$ the prime ideal over $p$, and $\F$ the residue field.  

Let $\rho_{f}: \absgal \rightarrow \GL(V_{f, \p})$ be the $p$-adic Galois representation associated to an eigenform $f$, $T_{f,\p}$ a $\absgal$-stable $\ROI$-lattice, and $W_{f,\p} = V_{f,\p}/T_{f,\p}$.  We denote twists by the $m$th power of the cyclotomic character by writing $V_{f,\p}(m)$ and similarly for $W_{f,\p}(m)$.  We drop the subscript $f$ and $\p$ except for in the statement of theorems as they are fixed throughout the section, i.e., we set $W = W_{f,\p}, T =T_{f, \p}$ and $V = V_{f,\p}$.  

We have the following result giving the existence of $4$-dimensional Galois representations attached to Siegel eigenforms.   

\begin{theorem}(\cite{skinnerurban}, Theorem 3.1.3)\label{thm:existencegalreps} Let $F \in \mathcal{S}_{k}(\Sp_4(\inte))$ be an eigenform, $K_{F}$ the number field generated by the Hecke eigenvalues of $F$, and $\wp$ a prime of $K_{F}$ over $p$.  There exists a finite extension $E$ of the completion of $K_{F, \wp}$ of $K_{F}$ at $\wp$ and a continuous semi-simple Galois representation 
\begin{equation*}
\rho_{F, \wp}: \absgal \rightarrow \GL_4(E)
\end{equation*}
unramified away from $p$ so that for all $\ell \neq p$ we have
\begin{equation*}
\det(X\cdot 1_{4} - \rho_{F, \wp}(\Frob_{\ell})) = L_{\spin, (\ell)}(X).
\end{equation*}
\end{theorem}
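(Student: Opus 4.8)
The plan is to split $\mathcal{S}_{k}(\Sp_4(\inte))$ according to the decomposition $\mathcal{S}_{k}(\Sp_4(\inte)) = \mathcal{S}_{k}^{\M}(\Sp_4(\inte)) \oplus \mathcal{S}_{k}^{\NM}(\Sp_4(\inte))$ introduced in Section~\ref{sec:notation}; since the Siegel Hecke algebra respects this decomposition, a Hecke eigenform $F$ lies entirely in one of the two summands, and the Maass (Saito--Kurokawa) case and the non-Maass case can be handled separately.

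If $F = \Ff$ is a Saito--Kurokawa lift of a newform $f \in S_{2k-2}(\SL_2(\inte))$, then (possibly after enlarging $E$) I would simply set $\rho_{F,\wp} = \varepsilon^{k-2} \oplus \rho_{f,\wp} \oplus \varepsilon^{k-1}$, where $\rho_{f,\wp}$ is Deligne's $\wp$-adic Galois representation attached to $f$ and $\varepsilon$ is the $p$-adic cyclotomic character. This is continuous and semisimple, and it is unramified away from $p$ because $f$ has level $1$ (so $\rho_{f,\wp}$ is unramified outside $p$) and each power of $\varepsilon$ is unramified outside $p$. The required Frobenius identity then follows by matching Euler factors in the factorization $L_{\spin}(s,\Ff) = \zeta(s-k+1)\zeta(s-k+2)L(s,f)$ of Theorem~\ref{thm:skcorrespondence} against $\det(X - \rho_{f,\wp}(\Frob_{\ell})) = X^2 - a_f(\ell)X + \ell^{2k-3}$ for $\ell \neq p$; this is exactly the content of the Proposition stated after Theorem~\ref{thm:standardzetafactorization}.

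If instead $F \in \mathcal{S}_{k}^{\NM}(\Sp_4(\inte))$ is an eigenform, let $\pi$ be the cuspidal automorphic representation of $\GSp_4(\A)$ it generates; by construction $\pi$ is not CAP of Saito--Kurokawa type, and at level $1$ it is of general type. Here I would realize $\rho_{F,\wp}$ inside the $p$-adic \'etale cohomology of the Siegel modular threefold $A_2 = \Sp_4(\inte)\backslash\h{2}$ with coefficients in the local system $\mathcal{V}_{\lambda}$ attached to the weight. Because the level is $1$, a smooth model of $A_2$ has good reduction at every prime $\ell \neq p$, so the resulting representation is automatically unramified away from $p$. The steps are: (i) identify the $\pi_f$-isotypic part of $\coh^3(A_{2,\qbar}, \mathcal{V}_{\lambda})$, passing to interior (intersection) cohomology to discard the boundary contribution; (ii) invoke Weissauer's analysis of these cohomology groups (building on Taylor and Laumon) to see that, after semisimplification, this piece is a single $4$-dimensional representation of $\absgal$ (when $\pi$ is of Yoshida type it would be a sum of two $2$-dimensional pieces, but that does not occur at level $1$); (iii) compute $\Frob_{\ell}$ for $\ell \neq p$ by an Eichler--Shimura type congruence relating the Siegel Hecke operators $T^{S}(\ell), T^{S}(\ell^2)$ to Frobenius, using the Weil bounds to pick out the holomorphic member of the archimedean $L$-packet and thereby pin the characteristic polynomial of $\Frob_{\ell}$ to the local spinor polynomial $L_{\spin,(\ell)}(X)$; (iv) conclude that $\rho_{F,\wp}$ is well defined up to isomorphism from its traces of Frobenius by Brauer--Nesbitt and the Chebotarev density theorem, after possibly enlarging $E$ so that the a priori projective representation is realized in $\GL_4(E)$.

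The main obstacle is entirely within the non-Maass case, at steps (ii) and (iii). For (ii), isolating a genuinely $4$-dimensional piece requires controlling the $\pi_f$-isotypic components of the cohomology of $A_2$, which is governed by the stabilized (twisted) trace formula and comparisons with trace formulas for the relevant inner forms --- this is Weissauer's theorem and is deep. For (iii), the naive Eichler--Shimura relation fails because the holomorphic and generic discrete series lie in the same archimedean $L$-packet, so one must use purity (the Weil conjectures) to force the correct ``spin'' normalization on the holomorphic part; this is the delicate point on which the construction rests. In the body of the paper all of this is absorbed into the citation of \cite{skinnerurban}, Theorem 3.1.3, but the argument above is the shape of the proof.
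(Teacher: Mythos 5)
This theorem is not proved in the paper at all: it is imported wholesale as a citation of \cite{skinnerurban}, Theorem 3.1.3, so there is no in-paper argument to compare your proposal against. Judged on its own terms, your sketch is a faithful outline of how the result is actually established in the literature. The split along $\mathcal{S}_{k}(\Sp_4(\inte)) = \mathcal{S}_{k}^{\M}(\Sp_4(\inte)) \oplus \mathcal{S}_{k}^{\NM}(\Sp_4(\inte))$ is the right first move; in the Maass case your explicit $\varepsilon^{k-2}\oplus\rho_{f,\wp}\oplus\varepsilon^{k-1}$ is exactly the Proposition recorded after Theorem \ref{thm:standardzetafactorization}, and the Euler-factor match with $L_{\spin}(s,\Ff)=\zeta(s-k+1)\zeta(s-k+2)L(s,f)$ is the correct verification of the Frobenius identity. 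In the non-Maass case your steps (i)--(iv) correctly locate the representation in the middle cohomology of the Siegel threefold and correctly identify the two genuinely hard inputs: Weissauer's trace-formula analysis of the $\pi_f$-isotypic pieces (with the observation that Yoshida type is excluded at full level), and the failure of the naive Eichler--Shimura congruence to distinguish members of the archimedean packet, repaired by purity. Your parenthetical conflation of interior and intersection cohomology is a minor imprecision, and of course steps (ii) and (iii) are themselves theorems of Taylor, Laumon and Weissauer rather than things you prove; but since the paper black-boxes the entire statement, your sketch, which black-boxes only those two named inputs, is strictly more informative than what the paper provides and contains no incorrect step.
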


The following result is crucial in producing elements in the Selmer group.

\begin{theorem} (\cite{faltings}, \cite{urban}) Let $F$ be as in Theorem \ref{thm:existencegalreps}.  The restriction of $\rho_{F, \wp}$ to the decomposition group $D_{p}$ is crystalline at $p$.  In addition, if $p > 2k-2$ then $\rho_{F, \wp}$ is short.
\end{theorem}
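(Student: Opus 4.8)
The plan is to read off both assertions from $p$-adic Hodge theory, using crucially that $F$ has level one, so that $p\nmid N$ and there is no ramification at $p$ coming from the level.

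First I would establish crystallinity. The point is that $\rho_{F,\wp}$ (or, when $F$ is a Saito--Kurokawa lift, each of the constituents $\varepsilon^{k-2}$, $\rho_f$, $\varepsilon^{k-1}$ of the decomposition recorded in the Proposition above) is realized in the $p$-adic \'etale cohomology of a smooth proper variety over $\rat$ with good reduction at $p$: a suitable toroidal compactification of a Siegel modular threefold of level one with coefficients in the local system attached to the weight $k$, or a Kuga--Sato variety together with Tate twists in the Saito--Kurokawa case. Since the level is one this variety has good reduction at $p$, so Faltings' crystalline comparison theorem shows its $p$-adic \'etale cohomology is a crystalline $D_p$-representation; as crystalline representations of $D_p$ are stable under subquotients, $\rho_{F,\wp}|_{D_p}$ is crystalline. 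This is precisely what is proved in \cite{faltings} and \cite{urban}.

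Next I would pin down the Hodge--Tate weights. The Hodge filtration on the relevant cohomology shows that $V = \rho_{F,\wp}\otimes E$ has Hodge--Tate weights $\{0,k-2,k-1,2k-3\}$; in the Saito--Kurokawa case this is immediate from $\rho_{\Ff}=\varepsilon^{k-2}\oplus \rho_f\oplus \varepsilon^{k-1}$, recalling that $\rho_f$ has Hodge--Tate weights $\{0,2k-3\}$ for $f$ of weight $2k-2$, and that (with the convention forced here by the requirement $\Fil^0 D=D$) $\rat_p(n)$ has Hodge--Tate weight $n$. Hence $\Fil^0 D = D$, since $0$ is the smallest Hodge--Tate weight, and $\Fil^p D = 0$, since the largest Hodge--Tate weight $2k-3$ satisfies $2k-3\le p-2$ under the hypothesis $p>2k-2$. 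It then remains to verify the ramification condition. Let $V'$ be a nonzero quotient of $V$; since crystalline representations are closed under quotients, $V'$ is crystalline with Hodge--Tate weights a nonempty subset of $\{0,k-2,k-1,2k-3\}$, in particular contained in $[0,p-2]$. Then $V'\otimes_{\rat_p}\rat_p(p-1)$ is crystalline with every Hodge--Tate weight at least $p-1\ge 1$, hence strictly positive; but an unramified representation is Hodge--Tate with every Hodge--Tate weight equal to $0$, so $V'\otimes_{\rat_p}\rat_p(p-1)$ cannot be unramified, that is, it is ramified. This checks all three conditions, so $V$ is short.

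The hard part will not be the formal $p$-adic Hodge theory above, but the geometric input it rests on: that $\rho_{F,\wp}$ genuinely occurs in the \'etale cohomology of a good-reduction variety and has the stated Hodge--Tate weights. This is the substance of the cited work of Faltings and Urban, building on the construction of Galois representations attached to genus-two Siegel eigenforms (Taylor, Laumon, Weissauer); once those facts are in hand, the verification of the three conditions defining ``short'' is the routine argument sketched above.
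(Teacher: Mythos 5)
Your proposal is correct and matches the paper's treatment: the paper offers no proof of this theorem, simply citing Faltings and Urban for the geometric input (realization of $\rho_{F,\wp}$ in the \'etale cohomology of a good-reduction variety and the resulting crystallinity and Hodge--Tate weights $\{0,k-2,k-1,2k-3\}$), exactly as you identify. Your verification that the bound $p>2k-2$ forces $\Fil^{0}D=D$, $\Fil^{p}D=0$, and the ramification condition on twists of quotients is the routine deduction the paper leaves implicit, and it is carried out correctly.
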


Recall that we denoted the image of $\T_{\ROI}^{S}$ in $\End_{\cmplx}(\mathcal{S}_{k}^{\NM}(\Sp_4(\inte)))$ by $\T_{\ROI}^{\NM}$.  Let $\mathcal{M}^{S}$ denote the set of maximal ideals of $\T_{\ROI}^{S}$ and $\mathcal{M}^{\NM}$ denote the set of maximal ideals of $\T_{\ROI}^{\NM}$. Write $\T_{\ROI}^{\NM} = \prod_{\mf{m}\in \mathcal{M}^{\NM}} \T_{\ROI, \mf{m}}^{\NM}$ where the subscript $\mf{m}$ denotes localization at $\mf{m}$.  Again we let $\phi$ denote the natural projection from $\T_{\ROI}^{S}$ to $\T_{\ROI}^{\NM}$.  Let $\mathcal{M}^{c}$ denote the set of primes in $\mathcal{M}^{S}$ that are preimages of elements of $\mathcal{M}^{\NM}$ under $\phi$ and $\mathcal{M}^{nc} = \mathcal{M}^{S} - \mathcal{M}^{c}$. Our factorization of $\T_{\ROI}^{S}$ and $\T_{\ROI}^{\NM}$ allows us to factor the map $\phi$ as 
\begin{equation*}
\phi = \prod_{\mf{m} \in \mathcal{M}^{c}} \phi_{\mf{m}} \prod \times \prod_{\mf{m} \in \mathcal{M}^{nc}} 0_{\mf{m}}
\end{equation*}
where $0_{\mf{m}}$ is the zero map and $\phi_{\mf{m}}: \T_{\ROI, \mf{m}}^{S} \rightarrow \T_{\ROI, \mf{m}'}^{S}$ is the projection with $\mathfrak{m}' \in \mathcal{M}^{\NM}$ the unique maximal ideal so that $\phi^{-1}(\mf{m}') = \mf{m}$. 

\begin{theorem}\label{thm:selmerbound} We have
\begin{equation*}
\ord_{p}(\# S(\rat, W_{f,\p}(1-k))) \geq \ord_{p}(\# \T_{\mf{m}_{\Ff}}^{\NM}/\phi_{\mf{m}_{\Ff}}(\Ann(\Ff)))
\end{equation*}
where for an $\ROI$-module $M$, $\ord_{p}(\#M) = [\ROI/\varpi : \F_{p}] \length_{\ROI}(M)$.
\end{theorem}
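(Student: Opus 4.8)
The plan is to run the Ribet--Urban argument with the residually reducible representation $\rho_{\Ff}\otimes\varepsilon^{1-k}$: feed the congruence produced in Section~\ref{sec:congruences} into the lattice construction of Section~\ref{sec:lattices}, and then read off Selmer classes from the resulting extension.

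\textbf{Set-up and reduction.} Let $\mf{m}'\in\mathcal M^{\NM}$ be the maximal ideal with $\phi^{-1}(\mf{m}')=\mf{m}_{\Ff}$ and set $\mc R=\T^{\NM}_{\ROI,\mf{m}'}$, completed if necessary so that it is Henselian; it is a reduced, local, commutative $\ROI$-algebra, finite over $\ROI$, with residue field $\kappa_{\mc R}$ a finite extension of $\F_p$. Put $I=\phi_{\mf{m}_{\Ff}}(\Ann(\Ff))\mc R$. By the commutative diagram of Section~\ref{sec:congruences} and the locality of $\ROI/\varpi^r$ we have $\mc R/I\cong\ROI/\varpi^r$ with $r$ the integer appearing there, so $\ord_p(\#\T^{\NM}_{\mf{m}_{\Ff}}/\phi_{\mf{m}_{\Ff}}(\Ann(\Ff)))=r\,[\ROI/\varpi:\F_p]$. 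Since $S$ is the Pontryagin dual of $\Sel$ we have $\#S(\rat,W_{f,\p}(1-k))=\#\Sel(\rat,W_{f,\p}(1-k))$, and because $\ord_p(\#M)=[\ROI/\varpi:\F_p]\length_{\ROI}(M)$, it suffices to prove $\length_{\ROI}\Sel(\rat,W_{f,\p}(1-k))\ge r$.

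\textbf{The Galois representation and the lattice.} For each minimal prime $\wp$ of $\mc R$ let $G_\wp$ be the corresponding non-Maass Siegel eigenform; by construction every such $G_\wp$ satisfies $G_\wp\equiv_{\ev}\Ff\ (\modu\varpi)$. Theorem~\ref{thm:existencegalreps} attaches to $G_\wp$ a semisimple $\rho_{G_\wp,\wp}\colon\absgal\to\GL_4(F_\wp)$ unramified outside $p$, which is short crystalline at $p$ since $p>2k-2$. Assembling these gives $\rho\colon\absgal\to\GL_4(F_{\mc R})$ whose characteristic polynomial has coefficients in $\mc R$ (traces of Frobenii are Hecke eigenvalues), and which is absolutely irreducible since at level $1$ none of the $G_\wp$ is a CAP form or a Yoshida-type lift with reducible associated representation. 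Reducing modulo $\mf{m}_{\mc R}$ and using the structure of $\rho_{\Ff}$ recalled in Section~\ref{sec:sklifts}, $\rho^{\semi}\equiv\rho_{\Ff}^{\semi}=\varepsilon^{k-2}\oplus\rho_f\oplus\varepsilon^{k-1}\ (\modu\mf{m}_{\mc R})$. Now apply Theorem~\ref{thm:lattices} to $\mc A=\ROI[\absgal]$, the representation $\rho\otimes\varepsilon^{1-k}$, the ideal $I$, and $\rho_1=\varepsilon^{-1}$, $\rho_2=\rho_f\otimes\varepsilon^{1-k}$, $\rho_3=\mathbf 1$ (so $n_1=n_3=1$, $n_2=2$). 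Hypothesis (1) is the integrality above; (2) holds because modulo $I\cong(\varpi^r)$ the trace of $\rho$ agrees with that of $\rho_{\Ff}$; (3) and (4) hold because $\ov\rho_f$ is absolutely irreducible of dimension $2$ while $\ov\varepsilon^{-1}\neq\mathbf 1$ as $p>2$, after enlarging $E$ so that $\kappa_{\mc R}$ has at least four elements. This produces an $\mc A$-stable lattice $\lat$ fitting in
\begin{equation*}
0\longrightarrow\mc N_1\oplus\mc N_2\longrightarrow\lat\otimes\mc R/I\longrightarrow\mathbf 1\otimes\mc R/I\longrightarrow 0,
\end{equation*}
with $\mc N_2=(\rho_f\otimes\varepsilon^{1-k})\otimes_{\ROI}\mc T_2/I\mc T_2$, such that $\lat$ has no quotient $\ov\rho'$ with $\ov\rho'^{\semi}=\ov\varepsilon^{-1}\oplus(\ov\rho_f\otimes\ov\varepsilon^{1-k})$; moreover Lemma~\ref{lem:lattices2} together with the freeness of $\lat_2$ of rank $2$ over $\mc R$ gives $\length_{\ROI}(\mc T_2/I\mc T_2)=\length_{\ROI}(\mc R/I)=r$.

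\textbf{Extracting and counting Selmer classes.} Since $\ov\varepsilon^{-1}$ is not a Jordan--H\"older constituent of $\ov\rho_f\otimes\ov\varepsilon^{1-k}$, the summands $\mc N_1$ and $\mc N_2$ are each $\mc A$-stable inside $\mc N_1\oplus\mc N_2$, so the class of the displayed sequence decomposes and its $\mc N_2$-component yields $c\in\Ext^1_{\mc R/I[\absgal]}(\mc R/I,\mc N_2)\cong\coh^1(\absgal,(\rho_f\otimes\varepsilon^{1-k})\otimes_{\ROI}\mc T)$ with $\mc T:=\mc T_2/I\mc T_2$. For $\psi\in\Hom_{\ROI}(\mc T,E/\ROI)$, pushing forward along $\id\otimes\psi\colon(\rho_f\otimes\varepsilon^{1-k})\otimes_{\ROI}\mc T\to(\rho_f\otimes\varepsilon^{1-k})\otimes_{\ROI}E/\ROI=W_{f,\p}(1-k)$ defines a class $\Phi(\psi)\in\coh^1(\rat,W_{f,\p}(1-k))$. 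Because the $G_\wp$ have level $1$, all modules in sight are unramified outside $p$; because $\lat$ can be realized (up to scaling) inside $\bigoplus_\wp T_{G_\wp,\wp}$, the module $\lat\otimes\mc R/I$ and each of its subquotients lies in the essential image of the functor $\mathbb V$ of \cite{dfg} at $p$, so Proposition~\ref{prop:crystalline} and Proposition~\ref{thm:diamondflachguo} show each $\Phi(\psi)$ lies in $\Sel(\rat,W_{f,\p}(1-k))$. Finally $\Phi$ is injective: irreducibility of $\ov\rho_f$ forces $\coh^0(\rat,W_{f,\p}(1-k))=0$, hence $\coh^1(\rat,W_{f,\p}(1-k)[s])\hookrightarrow\coh^1(\rat,W_{f,\p}(1-k))$ for every $s$, reducing the injectivity of $\Phi$ to the non-vanishing of the reductions of $c$, which is exactly what the ``no quotient'' clause of Theorem~\ref{thm:lattices} and the surjectivity of $A^{2,3}$ from Lemma~\ref{lem:surj} provide, as in Lemmas~1.2 and~1.6 of \cite{urbanduke}. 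Therefore $\length_{\ROI}\Sel(\rat,W_{f,\p}(1-k))\ge\length_{\ROI}\Hom_{\ROI}(\mc T,E/\ROI)=\length_{\ROI}\mc T=r$, which is the asserted bound.

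\textbf{Main obstacle.} The two delicate points are: (i) ensuring $\rho$ is absolutely irreducible, i.e.\ that every Siegel eigenform congruent to $\Ff$ in the component cut out by $\mf m_{\Ff}$ has irreducible associated representation (at level $1$ this should be automatic, as there are no Yoshida lifts, but it must be justified); and (ii) converting the ``no quotient with semisimplification $\ov\varepsilon^{-1}\oplus(\ov\rho_f\otimes\ov\varepsilon^{1-k})$'' statement into the statement that the $\mc N_2$-extension class $c$ is annihilated by no proper ideal of $\mc R/I$ (equivalently, that its reduction modulo $\mf m_{\mc R}$ is non-zero). The latter is where the internal structure of Section~\ref{sec:lattices}---in particular the relation $\star_1\star_3=0$ in \eqref{eqn:urbanstar} and the surjectivity lemma controlling $A^{2,3}$---does the essential work, and it is the step most in need of care.
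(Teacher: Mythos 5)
Your overall strategy is the paper's: specialize Theorem \ref{thm:lattices} (via Corollary \ref{corl:lattices}) to $\rho_1=\varepsilon^{-1}$, $\rho_2=\rho_f\otimes\varepsilon^{1-k}$, $\rho_3=\mathbf{1}$ with $\T=\T^{\NM}_{\mf{m}_{\Ff}}$ and $I=\phi_{\mf{m}_{\Ff}}(\Ann(\Ff))$, push the resulting extension class into $\coh^1(\absgal, W_{f,\p}(1-k))$ by dualizing $\mc{T}_2/I\mc{T}_2$, check the local conditions via short-crystallinity and Propositions \ref{prop:crystalline}--\ref{thm:diamondflachguo}, and count using $\Fitt_{\T}(\mc{T}_2)=0$ (your length computation from freeness of $\lat_2$ is a harmless variant). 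But there are two genuine gaps. First, your claim that $\mc{N}_1$ and $\mc{N}_2$ are each $\mc{A}$-stable ``since $\ov\varepsilon^{-1}$ is not a Jordan--H\"older constituent of $\ov\rho_f\otimes\ov\varepsilon^{1-k}$'' is not justified: non-isomorphism of residual constituents does not preclude a non-split extension between $\rho_1\otimes\mc{T}_1/I\mc{T}_1$ and $\rho_2\otimes\mc{T}_2/I\mc{T}_2$ inside $\lat\otimes\T/I$. The lattice theorem only guarantees that \emph{one} of the two off-diagonal blocks vanishes (Proposition \ref{prop:lattices1} / Corollary \ref{corl:lattices}(4)), which is exactly why the paper runs a two-case argument; in the second case one must first show $\IM(\iota_1)\subset\Sel((E/\ROI)(-1))=0$ before $\mf{c}_2$ is even a well-defined cocycle valued in $\Hom(M_3,M_2)\otimes\mc{T}_2/I\mc{T}_2$.

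Second, and more seriously, the injectivity of your $\Phi$ (the paper's $\iota_2$) does not follow from the ``no quotient'' clause plus Lemma \ref{lem:surj}, as you assert. The no-quotient statement only says that $A^{1,3}$ and $A^{2,3}$ cannot \emph{both} vanish; it is consistent with $A^{2,3}\equiv 0$ and all of the extension living in the $\varepsilon^{-1}$-direction, in which case you get nothing in $\Sel(W_{f,\p}(1-k))$. (Lemma \ref{lem:surj} is used in the induction establishing the block-triangular shape \eqref{eqn:urbanstar}; it does not force non-vanishing of $\mf{c}_2$.) The missing ingredient is arithmetic: if $f\in\ker\iota_2$ then $\mf{c}_2$ dies in $\coh^1(\absgal,T\otimes\F)$, and one must then show $\mf{c}_1$ also dies residually --- otherwise it would produce an unramified class in $\coh^1(\rat,\F(-1))$, i.e.\ a nontrivial $\omega^{-1}$-eigenspace in the $p$-part of the class group of $\rat(\mu_p)$, contradicting Herbrand's theorem. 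Only after both components are killed does the no-quotient property yield the contradiction. You correctly flag this as the ``main obstacle,'' but flagging it is not the same as closing it; the proof of Lemma \ref{lem:918} is where the theorem is actually won, and your proposal leaves it open.
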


\begin{corollary} Let $k > 3$ be an integer and $p$ a prime so that $p > 2k-2$.  Let $f \in S_{2k-2}(\SL_2(\inte),\ROI)$  be a newform with real Fourier coefficients and $\Ff$ the Saito-Kurokawa lift of $f$.  Let $f$ be ordinary at $p$ and $\overline{\rho}_{f}$ be irreducible.  If there exists $N>1$, a fundamental discriminant $D$ so that $(-1)^{k-1}D>0$, $\chi_{D}(-1)=-1$, $p \nmid ND[\Sp_4(\inte):\Gamma_0^4(N)]$, and a Dirichlet character $\chi$ of conductor $N$ so that 
\begin{equation*}
-M=\ord_{\varpi}(\mathcal{L}(k,f,D,\chi)) <0
\end{equation*}
then 
\begin{equation*}
 \ord_{p}(\# S(\rat, W_{f,\p}(1-k))) \geq M
\end{equation*}
where we recall
\begin{equation*}
\mathcal{L}(k,f,D, \chi) = \frac{L^{\Sigma}(3-k,\chi) L_{\alg}(k-1,f,\chi_{D})L_{\alg}(1,f,\chi) L_{\alg}(2,f,\chi)}{L_{\alg}(k,f)}.
\end{equation*}
\end{corollary}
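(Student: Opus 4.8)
The plan is to deduce the Corollary by chaining together Theorem~\ref{thm:noneigenformcongruence}, Corollary~\ref{corl:capideal}, and Theorem~\ref{thm:selmerbound}; no genuinely new ingredient is needed, only some bookkeeping with $\varpi$-adic lengths and localizations. First I would observe that the hypotheses of the Corollary---$k>3$, $p>2k-2$, $f\in S_{2k-2}(\SL_2(\inte),\ROI)$ a newform with real Fourier coefficients, $f$ ordinary at $p$, $\ov{\rho}_f$ irreducible, and the existence of $N>1$, a fundamental discriminant $D$ with $(-1)^{k-1}D>0$ and $\chi_D(-1)=-1$, and a Dirichlet character $\chi$ of conductor $N$ with $p\nmid ND[\Sp_4(\inte):\Gamma_0^4(N)]$ and $\ord_\varpi(\mathcal{L}(k,f,D,\chi))=-M<0$---are precisely those of Theorem~\ref{thm:noneigenformcongruence}. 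Applying that theorem produces $G\in\mathcal{S}_k^{\NM}(\Sp_4(\inte))$ with $\Ff\equiv G\ (\modu\varpi^M)$.

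Next I would feed this congruence into Corollary~\ref{corl:capideal}, which yields $r\geq M$, where $r\in\inte_{\geq0}$ is the integer in the commutative diagram preceding that corollary; equivalently $r$ is determined by $\T_{\ROI}^{\NM}/\phi(\Ann(\Ff))\cong\ROI/\varpi^r\ROI$. Since $r\geq M\geq1$, the ideal $\phi(\Ann(\Ff))$ is proper, and because $\ROI/\varpi^r\ROI$ is local the surjection $\T_{\ROI}^{\NM}\twoheadrightarrow\ROI/\varpi^r\ROI$ factors through a single local component of $\T_{\ROI}^{\NM}$, namely the one attached to $\Ff$; this is exactly the content of the Lemma following Theorem~\ref{thm:noneigenformcongruence}, which gives $\Ff\equiv_{\ev}F\ (\modu\varpi)$ for some $F\in\mathcal{S}_k^{\NM}(\Sp_4(\inte))$, i.e.\ $\mathfrak{m}_{\Ff}\in\mathcal{M}^c$. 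Hence $\T_{\mathfrak{m}_{\Ff}}^{\NM}/\phi_{\mathfrak{m}_{\Ff}}(\Ann(\Ff))\cong\ROI/\varpi^r\ROI$, so
\begin{equation*}
\ord_p\bigl(\#\,\T_{\mathfrak{m}_{\Ff}}^{\NM}/\phi_{\mathfrak{m}_{\Ff}}(\Ann(\Ff))\bigr)=[\ROI/\varpi:\F_p]\cdot r\geq r\geq M,
\end{equation*}
using $[\ROI/\varpi:\F_p]\geq1$.

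Finally I would invoke Theorem~\ref{thm:selmerbound},
\begin{equation*}
\ord_p\bigl(\#\,S(\rat,W_{f,\p}(1-k))\bigr)\geq\ord_p\bigl(\#\,\T_{\mathfrak{m}_{\Ff}}^{\NM}/\phi_{\mathfrak{m}_{\Ff}}(\Ann(\Ff))\bigr),
\end{equation*}
and combine it with the displayed inequality to conclude $\ord_p(\#\,S(\rat,W_{f,\p}(1-k)))\geq M$, which is the assertion of the Corollary. I do not expect a real obstacle at this stage: the substance lives entirely upstream---Theorem~\ref{thm:noneigenformcongruence} rests on the cuspidality of the pullback of the Siegel Eisenstein series established in Section~\ref{sec:eseries} and on the analysis of $\ord_\varpi(\alpha c_{0,0})$, while Theorem~\ref{thm:selmerbound} rests on the lattice construction of Section~\ref{sec:lattices}---and, granting those, the argument above is a routine length computation through the CAP-ideal diagram. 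The one point deserving a line of care is the identification of the localized CAP quotient $\T_{\mathfrak{m}_{\Ff}}^{\NM}/\phi_{\mathfrak{m}_{\Ff}}(\Ann(\Ff))$ with the global $\T_{\ROI}^{\NM}/\phi(\Ann(\Ff))$, which holds because $\Ann(\Ff)$ is supported on the single component of $\T_{\ROI}^{\NM}$ indexed by $\mathfrak{m}_{\Ff}$.
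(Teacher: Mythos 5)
Your proposal is correct and follows exactly the paper's route: the paper's proof is the one-line observation that the corollary is an immediate consequence of Theorem~\ref{thm:selmerbound} and Corollary~\ref{corl:capideal}, which is precisely the chain you assemble (with Theorem~\ref{thm:noneigenformcongruence} feeding Corollary~\ref{corl:capideal}). Your extra care about identifying $\T_{\mathfrak{m}_{\Ff}}^{\NM}/\phi_{\mathfrak{m}_{\Ff}}(\Ann(\Ff))$ with $\T_{\ROI}^{\NM}/\phi(\Ann(\Ff))\cong\ROI/\varpi^{r}\ROI$ is a detail the paper leaves implicit, and it is handled correctly.
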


\begin{proof} This corollary is an immediate consequence of Theorem \ref{thm:selmerbound} and Corollary \ref{corl:capideal}.
\end{proof}

The work in $\S$ \ref{sec:lattices} and in particular Theorem \ref{thm:lattices} is the main input into the proof of Theorem \ref{thm:selmerbound}. We begin by adapting Theorem \ref{thm:lattices} to our current situation.  First we set up some notation following \cite{klosin}.  

Let $n = n_1 + n_2 + n_3$ with $n_{i} \geq 1$.  Let $V_{i}$ be $E$ vector spaces of dimension $n_{i}$ affording continuous absolutely irreducible representations $\rho_{i}: \absgal \rightarrow \Aut_{E}(V_{i})$ for $1 \leq i \leq 3$.  Assume the residual representations $\ov{\rho}_{i}$ are irreducible and nonisomorphic for $1 \leq i \leq 3$.   Let $\mc{V}_{1}, \dots, \mc{V}_{m}$ be $n$-dimensional $E$ vector spaces affording absolutely irreducible continuous representations $\varrho_{i} : \absgal \rightarrow \Aut_{E}(\mc{V}_{i})$ for $1 \leq i \leq m$.  Further assume that the modulo $\varpi$ reductions of $\varrho_{i}$ satisfy 
\begin{equation*}
\ov{\varrho}_{i}^{\semi} \cong \ov{\rho}_1 \oplus \ov{\rho}_2 \oplus \ov{\rho}_3
\end{equation*}
for some $\absgal$-stable lattice in $\mc{V}_{i}$ (and hence for all such lattices.)

For each $\sigma \in \absgal$, let 
\begin{equation*}
\sum_{j=0}^{n} a_{j}(\sigma)X^{j} \in \ROI[X]
\end{equation*}
be the characteristic polynomial of $(\rho_1 \oplus \rho_2 \oplus \rho_3)(\sigma)$ and 
\begin{equation*}
\sum_{j=0}^{n} c_{j}(i,\sigma)X^{j} \in \ROI[X]
\end{equation*}
be the characteristic polynomial of $\varrho_{i}(\sigma)$.  Set 
\begin{equation*}
c_{j}(\sigma) = \begin{pmatrix} c_{j}(1,\sigma) \\ \vdots \\ c_{j}(m,\sigma) \end{pmatrix} \in \ROI^{m}
\end{equation*}
for $0 \leq j \leq n-1$.  Let $\T \subset \ROI^{m}$ be the $\ROI$-subalgebra generated by the set $\{ c_{j}(\sigma) : \sigma \in \absgal, 0 \leq j \leq n-1\}$.  We can use the continuity of the $\varrho_{i}$ along with the Chebotarev density theorem to conclude that 
\begin{equation*}
\T = \{ c_{j}(\Frob_{\ell}): 0 \leq j \leq n-1, \ell \neq p\}.
\end{equation*}
Observe that $\T$ is a finite $\ROI$-algebra.  Let $I \subset \T$ be the ideal generated by the set $\{ c_{j}(\Frob_{\ell} - a_{j}(\Frob_{\ell}) : 0 \leq j \leq n-1, \ell \neq p\}.$  The definition of $I$ gives that the map $\ROI \rightarrow \T/I$ giving the $\ROI$-algebra structure is surjective.  Let $J$ be the kernel of this map so that we have $\ROI/J \cong \T/I$.

\begin{corollary}\label{corl:lattices} Suppose $\F^{\times}$ contains at least $n$ distinct elements. Then there exists a $\absgal$-stable $\T$-submodule $\lat \subset \bigoplus_{i=1}^{m} \mc{V}_{i}$, $\T$-submodules $\lat_1, \lat_2$ and $\lat_3$ contained in $\lat$ and finitely generated $\T$-modules $\mc{T}_1, \mc{T}_2$ such that 
\begin{enumerate}
\item as $\T$-modules we have $\lat = \lat_1 \oplus \lat_2 \oplus \lat_3$ and $\lat_{i} \cong \T^{n_{i}}$ for $1 \leq i \leq 3$;
\item $\lat$ has no $\T[\absgal]$-quotient isomorphic to $\ov{\rho}'$ where $\ov{\rho}'^{\semi} = \ov{\rho}_1 \oplus \ov{\rho}_2$;
\item $(\lat_1 \oplus \lat_2)/I(\lat_1 \oplus \lat_2)$ is $\absgal$-stable and there exists a $\T[\absgal]$-isomorphism
\begin{equation*}
\lat/(\lat + I(\lat_1 \oplus \lat_2)) \cong M_3 \otimes_{\ROI} \T/I
\end{equation*}
for any $\absgal$-stable $\ROI$-lattice $M_3 \subset V_3$;
\item one has either $\Hom_{\T/I}(M_1 \otimes_{\ROI} \mc{T}_1/I\mc{T}_1, M_2 \otimes_{\ROI} \mc{T}_2/I\mc{T}_2)  = 0$ or $\Hom_{\T/I}(M_2 \otimes_{\ROI} \mc{T}_2/I\mc{T}_2, M_1 \otimes_{\ROI} \mc{T}_1/I\mc{T}_1) = 0$ for any $\absgal$-stable $\ROI$-lattices $M_{i} \subset V_{i}$ for $i=1,2$ ;
\item $\Fitt_{\T}(\mc{T}_{i}) = 0$ for $i=1,2$ and there exists a $\T[\absgal]$-isomorphism
\begin{equation*}
\lat_{i}/I\lat_{i} \cong M_{i} \otimes_{\ROI} \mc{T}_{i}/I\mc{T}_{i}
\end{equation*}
for any $\absgal$-stable $\ROI$-lattice $M_{i} \subset V_{i}$ for $i=1,2$.
\end{enumerate}
\end{corollary}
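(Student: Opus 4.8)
The plan is to obtain this as a specialization of Theorem \ref{thm:lattices}, transcribed into the notation of this section. First I would make the following choices: $\mc{R} = \T$; $\mc{A} = \T[\absgal]$, acting on $F_{\mc{R}}^{n}$ through the representation $\rho$ afforded by $\bigoplus_{i=1}^{m}\varrho_{i}$ (so that the lattice $\lat$ produced below sits inside $\bigoplus_{i}\mc{V}_{i}$); $\rho_{i}$ for $1\le i\le 3$ the scalar extension to $\T$ of a fixed $\absgal$-stable $\ROI$-lattice $M_{i}\subset V_{i}$; and $I\subset\T$ the ideal already defined. I would then check that $\T$ meets the hypotheses imposed on $\mc{R}$: it is reduced since $\T\subseteq\ROI^{m}$, and finite over $\ROI$ since $\ROI^{m}$ is a finite $\ROI$-module and $\ROI$ is Noetherian. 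For locality, the assumption $\ov{\varrho}_{i}^{\semi}\cong\ov{\rho}_{1}\oplus\ov{\rho}_{2}\oplus\ov{\rho}_{3}$ for every $i$ forces $c_{j}(i,\sigma)\equiv a_{j}(\sigma)\ (\modu\varpi)$ independently of $i$, so the image of $\T$ in $\ROI^{m}/\varpi\ROI^{m}$ lies on the diagonal copy of $\F$ and $\T/\varpi\T=\F$; as $\T$ is finite over the local ring $\ROI$, every maximal ideal of $\T$ contains $\varpi$, hence $\varpi\T$ is the unique one and $\kappa_{\mc{R}}=\F$. Henselianness is inherited from the complete ring $\ROI$, and the requirement that $\kappa_{\mc{R}}^{\times}$ have $n$ distinct elements is the standing hypothesis $\#\F^{\times}\ge n$.

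Next I would verify hypotheses (1)--(4) of Theorem \ref{thm:lattices}. Hypothesis (1) holds since the coefficients of the characteristic polynomial of $\rho(\sigma)$ are the $c_{j}(\sigma)\in\T$ by construction. Hypothesis (2) is the defining property of $I$, together with the Chebotarev density theorem and the continuity of the $\varrho_{i}$, which pass the congruence from $\Frob_{\ell}$ to every $\sigma\in\absgal$; here one uses that the characteristic polynomial of $(\rho_{1}\oplus\rho_{2}\oplus\rho_{3})(\sigma)$ equals $\sum_{j}a_{j}(\sigma)X^{j}$ regardless of the chosen lattices $M_{i}$. Hypotheses (3) and (4) are the assumed absolute irreducibility of the $\ov{\rho}_{i}$ and the assumption that they are pairwise non-isomorphic; absolute irreducibility of $\rho$ itself holds because each of its components is $\varrho_{i}$, absolutely irreducible by hypothesis.

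Applying Theorem \ref{thm:lattices} and its proof then produces the $\mc{A}$-stable $\T$-lattice $\lat$, its decomposition $\lat=\lat_{1}\oplus\lat_{2}\oplus\lat_{3}$ from the idempotents $E_{i}$, the freeness $\lat_{i}\cong\T^{n_{i}}$ (which is conclusion (1)), the $\T$-lattices $\mc{T}_{1},\mc{T}_{2}$ of $F_{\mc{R}}$, the exact sequence
\begin{equation*}
0\longrightarrow(\lat_{1}/I\lat_{1})\oplus(\lat_{2}/I\lat_{2})\longrightarrow\lat/I\lat\longrightarrow\rho_{3}\otimes\T/I\longrightarrow 0,
\end{equation*}
the isomorphisms $\lat_{i}/I\lat_{i}\cong\rho_{i}\otimes\mc{T}_{i}/I\mc{T}_{i}$ of Lemma \ref{lem:lattices2}, the $\Hom$-group alternative of Proposition \ref{prop:lattices1}, and the absence of an $\mc{A}$-quotient $\ov{\rho}'$ of $\lat$ with $\ov{\rho}'^{\semi}=\ov{\rho}_{1}\oplus\ov{\rho}_{2}$ (conclusion (2)). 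Two final translations give conclusions (3)--(5). First, for \emph{any} $\absgal$-stable $\ROI$-lattice $M_{i}\subset V_{i}$ one has $\rho_{i}\otimes_{\T}\mc{T}_{i}/I\mc{T}_{i}\cong M_{i}\otimes_{\ROI}\mc{T}_{i}/I\mc{T}_{i}$ and $\rho_{3}\otimes\T/I\cong M_{3}\otimes_{\ROI}\T/I$: since $\ov{\rho}_{i}$ is absolutely irreducible, any two $\absgal$-stable $\ROI$-lattices in $V_{i}$ are homothetic---the image of one in the mod-$\varpi$ reduction of the other is a nonzero stable subspace, hence everything, so Nakayama forces equality up to a scalar---so $M_{i}\otimes_{\ROI}(-)$ does not depend on the choice and $\rho_{i}$ was taken to be one such base change. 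Second, $\Fitt_{\T}(\mc{T}_{i})=0$: a $\T$-lattice in $F_{\mc{R}}$ has rank one at every minimal prime of $\T$, so $\Ann_{\T}(\mc{T}_{i})$ lies in every minimal prime and vanishes ($\T$ being reduced), whence $\Fitt_{\T}(\mc{T}_{i})\subseteq\Ann_{\T}(\mc{T}_{i})=0$. Conclusion (4) then follows from Proposition \ref{prop:lattices1} together with conclusion (5).

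I expect the main obstacle to be bookkeeping rather than a single hard argument: checking carefully that $\T$ satisfies the reduced, local, Henselian hypotheses of Theorem \ref{thm:lattices}, and establishing the lattice-independence in conclusions (3)--(5), which rests on the uniqueness up to homothety of lattices when the residual representation is absolutely irreducible. Past these points the corollary is a direct transcription of Theorem \ref{thm:lattices}, Proposition \ref{prop:lattices1} and Lemma \ref{lem:lattices2} into the present notation.
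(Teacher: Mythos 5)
Your proposal is correct and follows the same route as the paper, whose own proof simply says the corollary "follows immediately from Theorem \ref{thm:lattices}" and refers to Klosin's Lemma 9.13 for $\Fitt_{\T}(\mc{T}_{i})=0$; you have supplied exactly the verifications (reduced, finite, local, Henselian; the four hypotheses; lattice-independence via homothety; the Fitting-ideal computation via $\Fitt\subseteq\Ann$ and reducedness) that the paper leaves implicit. One small imprecision: $\T/\varpi\T$ need not equal $\F$, since the kernel of $\T\rightarrow\F$ is $\mf{n}=\T\cap\varpi\ROI^{m}$, which may strictly contain $\varpi\T$; but because every maximal ideal of $\T$ is the contraction of a maximal ideal of $\ROI^{m}$ and all such contractions coincide with $\mf{n}$, your conclusion that $\T$ is local with residue field $\F$ stands.
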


\begin{proof}  Everything in this corollary follows immediately from Theorem \ref{thm:lattices}.  Though Fitting ideals are not mentioned there, the proof that $\Fitt_{\T}(\mc{T}_{i}) = 0$ follows immediately from our work in $\S$ \ref{sec:lattices}.  See Lemma 9.13 of \cite{klosin} for the details.
\end{proof} 

We now specialize to our situation. 
\begin{itemize}
\item $n_1 = n_3 = 1$, $n_2 = 2$; 
\item $\rho_1 = \varepsilon^{-1}$, $\rho_2 = \rho_{f}\otimes \varepsilon^{1-k}$, $\rho_3 = \id$.  Note what we are doing here is looking at the components of the $\rho_{\Ff}\otimes \varepsilon^{1-k}$.
\item $\T = \T_{\mf{m}_{\Ff}}^{\NM}$;
\item $G_1, \dots, G_{m}$ for the elements in an orthogonal eigenbasis of $\mathcal{S}_{k}^{\NM}(\Sp_4(\inte))$ such that $\phi^{-1}(\mf{m}_{G_{i}}^{\NM}) = \mf{m}_{\Ff}$. 
\item $I = $ the ideal of $\T$ generated by $\phi_{\mf{m}_{\Ff}}(\Ann \Ff)$;
\item $(\mc{V}_{i}, \varrho_{i}) = $ the representation $\rho_{G_{i}}$ for $1 \leq i \leq m$.
\end{itemize}

Let $M_{i}$ be a $\absgal$-stable $\ROI$-lattice inside $V_{i}$ for $1 \leq i \leq 3$.  We will continue to use the matrix notation as was used in $\S$ \ref{sec:lattices} when it is convenient for our purposes.  We now break into two cases depending on whether $\Hom_{\T/I}(M_1 \otimes_{\ROI} \mc{T}_1/I\mc{T}_1, M_2 \otimes_{\ROI} \mc{T}_2/I\mc{T}_2)  = 0$ or $\Hom_{\T/I}(M_2 \otimes_{\ROI} \mc{T}_2/I\mc{T}_2, M_1 \otimes_{\ROI} \mc{T}_1/I\mc{T}_1) = 0$. 

We begin with the case where $\Hom_{\T/I}(M_1 \otimes_{\ROI} \mc{T}_1/I\mc{T}_1, M_2 \otimes_{\ROI} \mc{T}_2/I\mc{T}_2)  = 0$.  We begin with the exact sequence\\

\begin{figure}[h!]
\centerline{\xymatrix{ 0 \ar[r] & \mc{N}_1 \oplus \mc{N}_2 \ar[r] & \lat \otimes \T/I \ar[r]  &\rho_3 \otimes \T/I \ar@/_1pc/[l]^{s} \ar[r] & 0,
}}
\end{figure}

\noindent where we have $\mc{N}_{i} = M_{i} \otimes_{\ROI} \mc{T}_{i}/I \mc{T}_{i}$ for $i = 1, 2$ (see Theorem \ref{thm:lattices} or Theorem \ref{corl:lattices} (3) and (5).)  As we saw in $\S$ \ref{sec:selmer}, this gives rise to a cocycle 
\begin{equation*}
\mf{c}_2 \in \coh^1(\absgal, \Hom_{\T/I}(M_3 \otimes_{\ROI} \T/I, M_2 \otimes_{\ROI} \mc{T}_2/I \mc{T}_2).
\end{equation*}
Observe that we have 
\begin{equation*}
\Hom_{\T/I}(M_3 \otimes_{\ROI} \T/I, M_2 \otimes_{\ROI} \mc{T}_2/I \mc{T}_2) \cong \Hom_{\ROI}(M_3, M_2) \otimes_{\ROI} \mc{T}_2/I\mc{T}_2
\end{equation*}
and so $\mf{c}_2$ can be regarded as a cocycle in $\coh^1(\absgal, \Hom_{\ROI}(M_3, M_2) \otimes_{\ROI} \mc{T}_2/I\mc{T}_2)$. Define a map 
\begin{align*}
\iota_2 : \Hom_{\ROI}(\mc{T}_2/I\mc{T}_2, E/\ROI) &\rightarrow \coh^1(\absgal, \Hom_{\ROI}(M_3, M_2) \otimes_{\ROI} E/\ROI)\\
    f &\mapsto (1\otimes f)(\mf{c}_2).
\end{align*}
Our assumption that $\Hom_{\T/I}(M_1 \otimes_{\ROI} \mc{T}_1/I\mc{T}_1, M_2 \otimes_{\ROI} \mc{T}_2/I\mc{T}_2)  = 0$ and the fact the that modulo $\varpi$ reduction of $\rho_{f} \otimes \varepsilon^{1-k}$ is absolutely irreducible give that we can choose $T = \Hom_{\ROI}(M_3, M_2)$ and we have $W = \Hom_{\ROI}(M_3, M_2) \otimes_{\ROI} E/\ROI$.  

\begin{lemma}\label{lem:917} $\IM(\iota_2) \subseteq \Sel(W)$.
\end{lemma}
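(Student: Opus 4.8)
The plan is to verify that every class $(1\otimes f)(\mf{c}_2)$ in $\IM(\iota_2)$ meets the two local conditions defining $\Sel(W)$: it is unramified at each prime $\ell\neq p$, and its restriction to $D_p$ lies in $\coh^1_{f}(\rat_p,W)$. Both will follow, by functoriality of these conditions under pushout, from properties of the Galois representations $\rho_{G_i}$ of the non-Maass eigenforms $G_i$ together with the construction of $\lat$. First I would pass to a finite level: by the set-up of this section $\T/I\cong\ROI/J$ for an ideal $J=\varpi^{r}\ROI\subseteq\ROI$ (cf.\ the diagram preceding Corollary~\ref{corl:capideal}), so $\mc{T}_2/I\mc{T}_2$ is killed by $\varpi^{r}$ and $(1\otimes f)(\mf{c}_2)$ is the image of a class in $\coh^1(\absgal,W[\varpi^{r}])$; by Proposition~\ref{thm:diamondflachguo} it then suffices to check the local conditions for the latter. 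Next I would pin down the extension: pushing out the exact sequence
\begin{equation*}
0\longrightarrow \mc{N}_1\oplus\mc{N}_2\longrightarrow \lat\otimes\T/I\longrightarrow \rho_3\otimes\T/I\longrightarrow 0
\end{equation*}
of Theorem~\ref{thm:lattices} along the projection $\mc{N}_1\oplus\mc{N}_2\to\mc{N}_2$ yields an $\ROI[\absgal]$-extension $0\to\mc{N}_2\to Y\to\rho_3\otimes\T/I\to 0$ representing $\mf{c}_2$, in which $\rho_3\otimes\T/I\cong\ROI/\varpi^{r}$ has trivial action, $\mc{N}_2\cong M_2\otimes_{\ROI}\mc{T}_2/I\mc{T}_2$ (Corollary~\ref{corl:lattices}(5)), and $Y$ is a quotient of $\lat\otimes\T/I=\lat/I\lat$, hence a subquotient of $\bigoplus_{i=1}^{m}\lat_{G_i}/\varpi^{r}\lat_{G_i}$ for $\absgal$-stable lattices $\lat_{G_i}\subset\mc{V}_i$. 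The class $(1\otimes f)(\mf{c}_2)$ is then represented by the pushout $Y'$ of $Y$ along the $\absgal$-equivariant map $1\otimes f\colon\mc{N}_2\to W[\varpi^{r}]\cong T/\varpi^{r}T$ (with $T=T_{f,\p}(1-k)$), which is a quotient of $Y\oplus(T/\varpi^{r}T)$.

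For $\ell\neq p$: each $\rho_{G_i}$ is unramified outside $p$ by Theorem~\ref{thm:existencegalreps}, so $Y$, and hence its pushout $Y'$, is unramified at $\ell$; thus the restriction of $(1\otimes f)(\mf{c}_2)$ to $D_\ell$ lies in $\coh^1_{\ur}(\rat_\ell,W[\varpi^{r}])$, which Proposition~\ref{thm:diamondflachguo} carries into $\coh^1_{f}(\rat_\ell,W)$. For $\ell=p$: the hypothesis $p>2k-2$ makes each $\rho_{G_i}$, and also $V=V_{f,\p}(1-k)$, short crystalline at $p$, so the reductions $\lat_{G_i}/\varpi^{r}$ and $T/\varpi^{r}T$ lie in the essential image of the functor $\mathbb{V}$ of \cite{dfg}; since that essential image is closed under finite direct sums, subobjects and quotients (\cite{dfg}, \S 2.1), and $Y'|_{D_p}$ is a subquotient of $\bigl(\bigoplus_i\lat_{G_i}\oplus T\bigr)/\varpi^{r}$, it too lies in the essential image. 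By the definition of $\coh^1_{f}(\rat_p,W[\varpi^{r}])$ (equivalently, by Proposition~\ref{prop:crystalline}) the class of $Y'$ lies there, and Proposition~\ref{thm:diamondflachguo} sends it into $\coh^1_{f}(\rat_p,W)$. Combining the two cases gives $(1\otimes f)(\mf{c}_2)\in\Sel(W)$, i.e.\ $\IM(\iota_2)\subseteq\Sel(W)$.

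The step I expect to be most delicate is the one in the middle of the first paragraph: faithfully realizing $(1\otimes f)(\mf{c}_2)$ as the class of an extension $Y'$ that is manifestly obtained, by pushout, from a subquotient of $\bigoplus_i\lat_{G_i}/\varpi^{r}$ and from $T/\varpi^{r}T$. Only once the extension has been put in this form do the closure properties of the essential image of $\mathbb{V}$ and Proposition~\ref{prop:crystalline} apply directly; everything after that is formal functoriality of the unramified and crystalline conditions under pushout, combined with Proposition~\ref{thm:diamondflachguo}.
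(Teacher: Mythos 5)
Your proposal is correct and follows essentially the same route as the paper: reduce to finite level $W[n]$ using that $\mc{T}_2/I\mc{T}_2$ is a finitely generated torsion $\ROI$-module, note that unramifiedness away from $p$ is automatic since the $\rho_{G_i}$ are unramified outside $p$, and deduce the condition at $p$ from short crystallinity of the $\rho_{G_i}$ together with Proposition~\ref{prop:crystalline} and Proposition~\ref{thm:diamondflachguo}. The only difference is that you make explicit the pushout realization of $(1\otimes f)(\mf{c}_2)$ as a subquotient of $\bigl(\bigoplus_i \lat_{G_i}\oplus T\bigr)/\varpi^{r}$, a step the paper compresses into the single sentence that the claim ``follows from the fact that each $(\varrho_i,\mc{V}_i)$ is short and crystalline at $p$.''
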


\begin{proof}  As our representations are unramified away from $p$, the only thing remaining to prove is that the condition at $p$ is satisfied.  Observe that since $\mc{T}_2/I\mc{T}_2$ is a finitely generated $\T$-module and $\T/I \cong \ROI/J$, it is also a finitely generated $\ROI$-module. Thus, there exists a positive integer $n$ so that $\Hom_{\ROI}(\mc{T}_2/I\mc{T}_2, E/\ROI) = \Hom_{\ROI}(\mc{T}_2/I\mc{T}_2, (E/\ROI)[n])$.  Thus, we have 
\begin{equation*}
\IM(\iota_2) \subseteq \coh^1(\absgal, \Hom_{\ROI}(M_3, M_2) \otimes_{\ROI} (E/\ROI)[n]) = \coh^1(\absgal, W[n]).
\end{equation*}
Proposition \ref{thm:diamondflachguo} gives that 
\begin{equation*}
\varinjlim_{n} \coh_{f}^1(\rat_{p}, W[n]) \cong
\coh_{f}^1(\rat_{p}, W).
\end{equation*}
Thus, it is enough to show that $\IM(\iota_2) \subseteq \coh^1_{f}(\rat_{p}, W[n])$.  However, this follows from the fact that each $(\rho_{i}, \mc{V}_{i})$ is short and crystalline at $p$.
\end{proof}

\begin{lemma}\label{lem:918} $(\ker\iota_2)^{\vee} = 0$.
\end{lemma}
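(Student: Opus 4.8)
The plan is to prove the equivalent statement that $\iota_2$ is injective; since $\mc{T}_2/I\mc{T}_2$ is finitely generated over $\T$ and hence over $\ROI$, the module $\Hom_{\ROI}(\mc{T}_2/I\mc{T}_2,E/\ROI)$ is finite, so $\ker\iota_2$ is finite and $(\ker\iota_2)^{\vee}=0$ if and only if $\ker\iota_2=0$. First I would reduce to a statement about a single cohomology class. Combining parts (1) and (5) of Corollary \ref{corl:lattices} gives $\lat_2/I\lat_2\cong(\T/I)^{n_2}$ and $\lat_2/I\lat_2\cong M_2\otimes_{\ROI}(\mc{T}_2/I\mc{T}_2)\cong(\mc{T}_2/I\mc{T}_2)^{n_2}$; since finitely generated modules over the quotient $\ROI/J\cong\T/I$ of the discrete valuation ring $\ROI$ decompose uniquely, this forces $\mc{T}_2/I\mc{T}_2\cong\T/I$, a cyclic $\ROI$-module. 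Hence $\Hom_{\ROI}(\mc{T}_2/I\mc{T}_2,E/\ROI)$ is cyclic, say generated by $f_0$, and (if $\mc{T}_2/I\mc{T}_2\neq 0$, the only case needing argument) $\iota_2$ is injective exactly when $\iota_2(\varpi^{\ell-1}f_0)\neq 0$, where $\ell=\length_{\ROI}(\T/I)$. Unwinding the definitions, $\iota_2(\varpi^{\ell-1}f_0)$ is the image under the natural map $\coh^1(\absgal,\Hom_{\ROI}(M_3,M_2)\otimes_{\ROI}\F)\to\coh^1(\absgal,W)$ of the reduction $\ov{\mf{c}}_2$ of $\mf{c}_2$ modulo $\mf{m}_{\T}$. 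Because $\ov{\rho}_f\otimes\omega^{1-k}$ is absolutely irreducible and nontrivial, $W^{\absgal}=0$, so that map is injective; it therefore suffices to show that $\ov{\mf{c}}_2\neq 0$.

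Next I would identify $\ov{\mf{c}}_2$ in terms of the construction of \S\ref{sec:lattices}. Using $\rho_3=\id$ and $n_3=1$ one has $A^{3,3}(a)=\Tr A^{3,3}(a)\equiv\Tr\rho_3(a)\equiv 1\pmod I$, and Proposition \ref{prop:lattices1} together with Lemma \ref{lem:urbanstar} make $\rho_{\lat}\bmod I$ block upper triangular, with the $(3,1)$, $(3,2)$ and, in the present case, $(2,1)$ blocks vanishing. From the cocycle formula of \S\ref{sec:selmer} it then follows that $\sigma\mapsto A^{2,3}(\sigma)\bmod I$ is a cocycle representing $\mf{c}_2$ under the identification $\Hom_{\T/I}(M_3\otimes\T/I,\,M_2\otimes\mc{T}_2/I\mc{T}_2)\cong\lat_2/I\lat_2$; reducing modulo $\mf{m}_{\T}$, the class $\ov{\mf{c}}_2$ is exactly the class of the extension $0\to\ov{\rho}_2\to\ov{\lat}/\ov{\lat}_1\to\ov{\rho}_3\to 0$ cut out by the off-diagonal block $\ov{A}^{2,3}$.

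The core of the argument is the non-splitting of that extension, and here I would use the surjectivity of Lemma \ref{lem:surj}: the map $a\mapsto(A^{1,3}(a),A^{2,3}(a))$ is onto $\Hom_{\T}(\lat_3,\lat_1)\oplus\Hom_{\T}(\lat_3,\lat_2)$, which modulo $\mf{m}_{\T}$ says precisely that $\ov{\lat}$ is generated over $\absgal$ by the line $\ov{\lat}_3\cong\ov{\rho}_3$. Assuming $\ov{\mf{c}}_2=0$, so that $\ov{A}^{2,3}(\sigma)=w-\ov{\rho}_2(\sigma)w$ for some $w\in\ov{\lat}_2$, I would perform the change of basis replacing the generator of $\ov{\lat}_3$ by its sum with $w$; this produces an $\absgal$-stable submodule of $\ov{\lat}$ in which the generating line acquires no $\ov{\rho}_2$-component, so that the image of $\ov{\lat}_3$ in an appropriate quotient of $\ov{\lat}$ fails to generate it — contradicting the generation property, once one also invokes the absolute irreducibility of $\ov{\rho}_2=\ov{\rho}_f\otimes\omega^{1-k}$, the pairwise non-isomorphism of the $\ov{\rho}_i$, and the final clause of Theorem \ref{thm:lattices}. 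This is a transcription to the three-piece setting of the argument of \cite{urbanduke} and of the corresponding lemma of \cite{klosin}.

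I expect the step demanding the most care to be this last one: matching the abstract class $\mf{c}_2$ with the concrete block $A^{2,3}$ and then running the change-of-basis argument cleanly when the $(1,2)$- and $(1,3)$-blocks of $\rho_{\lat}\bmod\mf{m}_{\T}$ are nonzero, where one needs the vanishing $\Fitt_{\T}(\mc{T}_2)=0$ of Corollary \ref{corl:lattices}(5) to rule out the degenerate alternatives. By contrast, the earlier reductions — finiteness of $\ker\iota_2$, cyclicity of $\mc{T}_2/I\mc{T}_2$, and the passage to the single assertion $\ov{\mf{c}}_2\neq 0$ — are routine.
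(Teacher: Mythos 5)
Your preliminary reductions are sound, but at the decisive step you take a genuinely different route from the paper, and one of your citations does not quite do the job you ask of it. The paper never uses the cyclicity of $\mc{T}_2/I\mc{T}_2$ (though your derivation of $\mc{T}_2/I\mc{T}_2\cong\T/I$ from Corollary \ref{corl:lattices}(1) and (5) is correct and gives a clean reduction); instead it takes an arbitrary nonzero $f\in\ker\iota_2$, chooses $A\supseteq\ker f$ with $(\mc{T}_2/I\mc{T}_2)/A\cong\F$, and uses the long exact sequence coming from $0 \rightarrow (\mc{T}_2/I\mc{T}_2)/\ker f \rightarrow E/\ROI \rightarrow \mc{I}_{f} \rightarrow 0$ together with $\coh^0(\absgal, T\otimes_{\ROI}\mc{I}_{f})=0$ to conclude that $\mf{c}_2$ dies in $\coh^1(\absgal, T\otimes_{\ROI}\F)$. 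The real divergence is in how the contradiction is then extracted: the paper needs the vanishing of \emph{both} residual classes. It shows separately that $\mf{c}_1$ dies in $\coh^1(\absgal,\F(-1))$ --- otherwise one gets a nonzero element of $\Sel(\F(-1))$, hence a nontrivial unramified $\omega^{-1}$-piece of the class group of $\rat(\mu_{p})$, contradicting Herbrand's theorem since $p\nmid B_2$ --- and only with both vanishings does the sequence $0\rightarrow(M_1\otimes\F)\oplus(M_2\otimes\F)\rightarrow(\lat/I\lat)/\lat'\rightarrow M_3\otimes\F\rightarrow0$ split, contradicting Corollary \ref{corl:lattices}(2). Your argument dispenses with $\mf{c}_1$, and with Herbrand's theorem, entirely.

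That shortcut can be made to work, but not by quoting the final clause of Theorem \ref{thm:lattices} as stated: that clause only forbids quotients of $\lat$ with semisimplification $\ov{\rho}_1\oplus\ov{\rho}_2$, whereas the splitting of $0\rightarrow\ov{\lat}_2\rightarrow\ov{\lat}/\ov{\lat}_1\rightarrow\ov{\lat}_3\rightarrow0$ only hands you a quotient of $\ov{\lat}$ isomorphic to $\ov{\rho}_2$ alone. You need the strengthening that $\lat$ has no nonzero quotient whose semisimplification omits $\ov{\rho}_3$; this is true, by the same mechanism as in the paper's proof of the final clause: $a_3$ acts on $\ov{\lat}$ as the idempotent $E_3$ but acts as $\ov{\rho}_2(a_3)=0$ on any such quotient $Q$, so $\ov{\lat}_3=E_3\ov{\lat}$ maps to $0$ in $Q$, and since $\ov{\lat}$ is the $\mc{A}$-span of $\ov{e}_{n}\in\ov{\lat}_3$ this forces $Q=0$. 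Note also that when the coboundary $w$ is nonzero the naive generation statement inside $\ov{\lat}$ itself is \emph{not} contradicted (the $\mc{A}$-span of $\ov{e}_{n}$ can still be all of $\ov{\lat}$), so the passage to the quotient $\ov{\lat}/(\ov{\lat}_1+\F v)$ with $v=\ov{e}_{n}+w$, which you gesture at, is essential rather than optional. With that strengthening written out, your proof closes, and it is in fact more economical than the paper's, which genuinely invokes $p\nmid B_2$ at this point.
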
 

\begin{proof}  Let $f \in \ker \iota_2$, $\mc{B}_{f} = (\mc{T}_2/I\mc{T}_2)/\ker f$, and $\mc{I}_{f} = (E/\ROI)/\IM f$.  Consider the short exact sequence
\begin{equation*}
0 \rightarrow \mc{B}_{f} \overset{f}{\rightarrow}  E/\ROI \rightarrow \mc{I}_{f} \rightarrow 0.
\end{equation*}
We tensor this sequence with $T$ and consider the long exact sequence of cohomology that results as well as the natural map $\phi$:\\

\begin{figure}[h!]
\centerline{\xymatrix{& \coh^1(\absgal, T \otimes_{\ROI} \mc{T}_2/I\mc{T}_2) \ar^{\phi}[d] \ar^{\coh^1(1\otimes f)}[dr] & \\ 
    \coh^0(\absgal, T \otimes_{\ROI} \mc{I}_{f}) \ar[r] & \coh^1(\absgal, T \otimes_{\ROI}\mc{B}_{f}) \ar^{\coh^1(1 \otimes f)}[r] & \coh^1(\absgal, T \otimes_{\ROI} E/\ROI).}}
\end{figure} 

\noindent  The fact that $\absgal$ acts on $M_3$ and $M_2$ in such a way as to give rise to irreducible non-isomorphic representations gives that $\coh^0(\absgal, T \otimes_{\ROI} \mc{I}_{f}) = 0$. Thus, we must have $\coh^1(1\otimes f)$ is an injective map.  Since $f \in \ker \iota_2$ by assumption, we have $\coh^1(1 \otimes f) \circ \phi(\mf{c}_2) = 0$.  Thus, the fact that $\coh^1(1\otimes f)$ is injective shows that $\mf{c}_2$ maps to 0 under the map $\phi$.  

Given any $g \in \Hom_{\ROI}(\mc{T}_2/I\mc{T}_2, E/\ROI)$, one has that $\ker g$ has finite index in $\mc{T}_2/I\mc{T}_2$.  So in particular, we have that there exists an $\ROI$-module $A$ with $\ker f \subseteq A \subset \mc{T}_2/I\mc{T}_2$ such that $(\mc{T}_2/I\mc{T}_2)/A \cong \ROI/\varpi \cong \F$.  Thus we have that the image of $\mf{c}_2$ in $\coh^1(\absgal, T\otimes_{\ROI} \F)$ is zero under the composite
\begin{equation*}
\coh^1(\absgal, T \otimes_{\ROI} \mc{T}_2/I\mc{T}_2) \overset{\phi}{\rightarrow} \coh^1(\absgal, T\otimes_{\ROI} ((\mc{T}_2/I\mc{T}_2)/\ker f)) \rightarrow  \coh^1(\absgal, T\otimes_{\ROI} \F).
\end{equation*} 

We now consider $\mf{c}_1 \in \coh^1(\absgal, \Hom_{\T/I}(M_3 \otimes_{\ROI} \T/I, M_1 \otimes_{\ROI} \mc{T}_1/I\mc{T}_1))$.  Let $T' = \Hom_{\ROI}(M_3, M_1)$.  Choose an $\ROI$-module $B\subset \mc{T}_1/I\mc{T}_1$ so that $(\mc{T}_1/I\mc{T}_1)/B \cong \F$.  The fact that $\mf{c}_2$ vanishes in $\coh^1(\absgal, T\otimes_{\ROI} \F)$ gives that $T' \otimes_{\ROI} \F \cong \F(-1)$ where we write $\F(-1)$ to indicate the finite field $\F$ with a $\absgal$-action given by $\omega^{-1}$.  Thus, if $\mf{c}_1$ is nonzero in $\coh^1(\absgal, T' \otimes_{\ROI} \F)$, i.e., is nonzero in $\coh^1(\absgal, \F(-1))$, we obtain, reasoning as above, an element in $\Sel(\F(-1))$.  However, such an element gives a non-trivial finite unramified abelian $p$-extension $K/\rat(\mu_{p})$ with the action of $\Gal(K/\rat)$ on $\Gal(K/\rat(\mu_{p}))$ given by $\omega^{-1}$.  Thus, we obtain a non-trivial subgroup of the $\omega^{-1}$-isotypical piece of the $p$-part of the class group of $\rat(\mu_{p})$. However, Herbrand's theorem says that this implies $p \mid \text{B}_2 = \frac{1}{30}$, clearly a contradiction.  Thus, it must be that $\mf{c}_2$ is zero in $\coh^1(\absgal, T' \otimes_{\ROI} \F)$.  (See \cite{jimbrown} pages 316-317 for the details of the argument showing $\mf{c}_2$ must be zero.)  However, this gives that the exact sequence
\begin{equation*}
0 \rightarrow (M_1 \otimes_{\ROI} \F) \oplus (M_2 \otimes_{\ROI} \F) \rightarrow (\lat/I\lat)/\lat' \rightarrow M_3 \otimes_{\ROI} \F \rightarrow 0
\end{equation*}
splits as a sequence of $\T[\absgal]$-modules where $\lat' = \varpi \lat + (M_1 \otimes_{\ROI} B) + (M_2 \otimes_{\ROI} B))$.  This contradicts Corollary \ref{corl:lattices} (2).  Thus, we have the result that $(\ker \iota_2)^{\vee} = 0$. 
\end{proof}

We are now able to finish the proof of Theorem \ref{thm:selmerbound} in the case where $\Hom_{\T/I}(M_1 \otimes_{\ROI} \mc{T}_1/I\mc{T}_1, M_2 \otimes_{\ROI} \mc{T}_2/I\mc{T}_2)  = 0$.  

\begin{proof} (of Theorem \ref{thm:selmerbound}) Lemma \ref{lem:917} immediately implies that we have the bound
\begin{equation*}
\ord_{p}(\#S(W)) \geq \ord_{p}(\# (\IM \iota_2)^{\vee}).
\end{equation*}
Similarly, we use Lemma \ref{lem:918} to conclude that 
\begin{equation*}
\ord_{p}(\# (\IM \iota_2)^{\vee}) = \ord_{p}(\# \Hom_{\ROI}(\mc{T}_2/I\mc{T}_2, E/\ROI) ^{\vee}).
\end{equation*}
Applying (\cite{hida}, page 98) to our situation we obtain 
\begin{equation*}
\Hom_{\ROI}(\mc{T}_2/I\mc{T}_2, E/\ROI)^{\vee} \cong (\mc{T}_2/I\mc{T}_2)^{\vee \vee}  \cong \mc{T}_2/I\mc{T}_2.
\end{equation*}
This allows us to conclude that 
\begin{equation*}
\ord_{p}(\# (\IM \iota_2)^{\vee}) = \ord_{p} (\# \mc{T}_2/I\mc{T}_2).
\end{equation*}
Corollary \ref{corl:lattices} gives that $\Fitt_{\T}(\mc{T}_2) = 0$ and so $\Fitt_{\T}(\mc{T}_2 \otimes_{\T} \T/I) \subset I$.  Thus we have 
\begin{equation*}
\ord_{p}(\#(\mc{T}_2 \otimes_{\T} \T/I)) \geq \ord_{p}(\#\T/I).
\end{equation*}
However, as $\ord_{p}(\# \mc{T}_2/I\mc{T}_2) = \ord_{p}(\# (\mc{T}_2 \otimes_{\T} \T/I))$, we are able to combine these results to obtain 
\begin{equation*}
\ord_{p}(\# S(W)) \geq \ord_{p} (\# \T/I)
\end{equation*}
as desired. 
\end{proof}

It now only remains to deal with the case where $\Hom_{\T/I}(M_2 \otimes_{\ROI} \mc{T}_2/I\mc{T}_2, M_1 \otimes_{\ROI} \mc{T}_1/I\mc{T}_1) = 0$.  We will see this argument goes much as before so we will be able to reference the first case for most of the work.  Again consider the cocycle 
\begin{equation*}
\mf{c}_1 \in \coh^1(\absgal, \Hom_{\T/I}(M_3 \otimes_{\ROI} \T/I, M_1 \otimes_{\ROI} \mc{T}_1/I \mc{T}_1).
\end{equation*}
As before, we have 
\begin{equation*}
\Hom_{\T/I}(M_3 \otimes_{\ROI} \T/I, M_1 \otimes_{\ROI} \mc{T}_1/I \mc{T}_1) \cong \Hom_{\ROI}(M_3, M_1) \otimes_{\ROI} \mc{T}_1/I\mc{T}_1
\end{equation*}
and so $\mf{c}_1$ can be regarded as a cocycle in $\coh^1(\absgal, \Hom_{\ROI}(M_3, M_1) \otimes_{\ROI} \mc{T}_1/I\mc{T}_1)$. Define a map 
\begin{align*}
\iota_1 : \Hom_{\ROI}(\mc{T}_1/I\mc{T}_1, E/\ROI) &\rightarrow \coh^1(\absgal, \Hom_{\ROI}(M_3, M_1) \otimes_{\ROI} E/\ROI)\\
    f &\mapsto (1\otimes f)(\mf{c}_1).
\end{align*}
Our assumption that $\Hom_{\T/I}(M_2 \otimes_{\ROI} \mc{T}_2/I\mc{T}_2, M_1 \otimes_{\ROI} \mc{T}_1/I\mc{T}_1)  = 0$ give that we can choose $T' = \Hom_{\ROI}(M_3, M_1) \cong \ROI(-1)$ and we have $W' = \Hom_{\ROI}(M_3, M_1) \otimes_{\ROI} E/\ROI \cong (E/\ROI)(-1)$.  As above in Lemma \ref{lem:917} we obtain that $\IM(\iota_1) \subset \Sel((E/\ROI)(-1))$.  However, as was used in the proof of Lemma \ref{lem:918}, $\Sel((E/\ROI)(-1)) = 0$.  Thus, the image of $\iota_1$ must be zero.  This puts us back into the situation where $T = \Hom_{\ROI}(M_3, M_2)$ and $W = \Hom_{\ROI}(M_3, M_2) \otimes_{\ROI} E/\ROI$.  We now are exactly in the situation we were in before and so the same arguments apply and give us the proof of Theorem \ref{thm:selmerbound}. 

\bibliographystyle{alpha}

\end{document}